\newtheorem{theorem}{Theorem}[section]    
\newtheorem{lemma}[theorem]{Lemma}          
\newtheorem{proposition}[theorem]{Proposition}  
\newtheorem{claim}[theorem]{Claim}  
\newtheorem{corollary}[theorem]{Corollary} 
\theoremstyle{definition}
\newtheorem{definition}[theorem]{Definition}
\newtheorem{remark}[theorem]{Remark}  
\newtheorem{example}[theorem]{Example}    
\numberwithin{equation}{section}
\newcommand{\e}{\varepsilon}
\newcommand{\F}{\mathcal F_{ob}}
\newcommand{\cF}{\mathcal F_\xi }
\newcommand{\Int}{{\textrm{Int}}}
\newcommand{\sgn}{{\tt sgn}}
\newcommand{\Aut}{{\rm{Aut}}}
\newcommand{\Hom}{{\rm{Hom}}}
\newcommand{\id}{{\rm{id}}}
\newcommand{\RR}{\mathbb{R}}
\newcommand{\Z}{\mathbb{Z}}
\newcommand{\MCG}{\mathcal{MCG}}
\newcommand{\mF}{\mathcal{F} }
\newcommand{\mK}{\mathcal{K} }
\newcommand{\M}{\mathcal{M}}
\newcommand{\PD}{{\scriptstyle\rm{PD}}}
\begin{document}

\title{Open book foliations}

\author{Tetsuya Ito}
\address{Research Institute for Mathematical Sciences, Kyoto university, Kyoto, 606-8502, Japan}
\email{tetitoh@kurims.kyoto-u.ac.jp}
\urladdr{http://kurims.kyoto-u.ac.jp/~tetitoh/}

\author{Keiko Kawamuro}
\address{Department of Mathematics \\ 
The University of Iowa \\ Iowa City, IA 52240, USA}
\email{kawamuro@iowa.uiowa.edu}

\subjclass[2000]{Primary 57M25, 57M27; Secondary 57M50}

\keywords{open book decomposition, contact structure, self linking number, Johnson-Morita homomorphism}
\date{\today} 

\begin{abstract}
We study open book foliations on surfaces in $3$-manifolds, and 
give applications to contact geometry of dimension $3$. 
We prove a braid-theoretic formula of the self-linking number of transverse links, which reveals an unexpected link to the Johnson-Morita homomorphism in mapping class group theory. 
We also give an alternative combinatorial proof to the Bennequin-Eliashberg inequality. 
\end{abstract}

\maketitle

\tableofcontents


\section{Introduction}

In his seminal work \cite{Ben}, Bennequin shows that there is an ``exotic" contact structure, $\xi_{ot}$, on $S^{3}$ which is homotopic to the standard contact structure, $\xi_{std}$, as a $2$-plane field but {\em not} contactomorphic to $\xi_{std}$. 
In other words, $\xi_{std}$ is tight whereas $\xi_{ot}$ is overtwisted in contemporary terminology.  
In order to distinguish these contact structures he studies closed braids and characteristic foliations on their Seifert surfaces induced by the contact structures. 
Since then, Bennequin's method has been developed in two directions. 

One direction is the theory of characteristic foliations and convex surfaces:  
Eliashberg uses characteristic foliations to show the Bennequin-Eliashberg inequality for tight contact $3$-manifolds and generalizes the Bennequin inequality for the tight contact $3$-sphere in \cite{el2}. 
Characteristic foliations also play important roles in Eliashberg's  classification of overtwisted contact structures \cite{el1}.
In \cite{giroux1} Giroux extends characteristic foliation theory and initiates convex surface theory. 
This gives us cut-and-paste techniques to study contact structures and to classify tight contact structures for various $3$-manifolds.
See also Honda's work \cite{ho} on convex surface theory.

The other direction is the theory of braid foliations studied in a series of papers by Birman and Menasco \cite{BMiv, BMii, BMv, BMi, BMvi, BMiii, bm1}. One of its highest achievements is ``Markov theorem without stabilization'' which states that given two closed braid representatives of any link in $\RR^3$ can be transformed to each other in a very controlled manner \cite{bm1}. Moreover, Birman and Menasco apply the braid foliation to contact geometry and construct examples of transversely non-simple knots in the standard contact $3$-sphere: knots having the same topological type and the same self-linking number but not transversely isotopic~\cite{bm2}. 
Their examples are closed $3$-braids related by negative flypes. 
Analysis of a sequence of braid moves which relates one to the other (a Markov tower) reveals that the two closed braids represent distinct transverse links. See also \cite{bf} which is a concise survey article.

We establish a foundation of {\em open book foliations} which generalizes braid foliations.

Our starting point is a classical theorem that dates back to Alexander: every closed, oriented 3-manifold admits an open book decomposition.
An open book decomposition naturally induces a singular foliation on an embedded surface. When the foliation satisfies certain conditions, we call it an open book foliation on the surface. As shown in Theorem~\ref{prop:of}, any embedded surface can be isotoped to admit an open book foliation. 

An idea of open book foliation has been existing for some time. 
The project of this paper started from conversation between John Etnyre and the authors in the conference ``Braids in Seville'' in 2011. 
Etnyre pointed out that Bennequin's work \cite{Ben} suggests that characteristic foliations and open book foliations are essentially the ``same''. 
Also he and Ko Honda had discussed about generalization of braid foliations. 
In addition, readers may find a preliminary step toward open book foliations in Pavelescu's thesis \cite{Pav}.

However, a foundation of open book foliations in the general setting has not been fully developed in the literature, and an open book foliation has often been regarded as a special kind of characteristic foliations in contact geometry. 
In this paper we develop basics of open book foliations in topological and combinatorial way. 
It is important that open book foliation theory is independent of the theory of characteristic foliations. 
In fact in Remark~\ref{remark:rigid} we list items that highlight differences of the two foliations. 
A most notable difference is that open book foliations are more `rigid' than characteristic foliations. 
For instance, the Giroux cancellation lemma \cite{giroux2} for characteristic foliations does not apply to open book foliations.
However the two foliations have similar appearances as Etnyre pointed out to us.  
We prove {\em the structural stability theorem} (Theorem~\ref{identity theorem}) that states that the two foliations can be topologically conjugate to each other under certain conditions.

Hence via the Giroux-correspondence \cite{giroux3} open book foliation theory gives rise to a new technique to analyze general contact 3-manifolds, just like Bennequin's foliations and Birman-Menasco's braid foliations were used to study the standard tight contact $3$-sphere. 

Our first application of open book foliations to contact geometry is a self-linking number formula of an $n$-stranded braid, $b$, with respect to an open book $(S, \phi)$:
$$sl(\hat{b}, [\Sigma]) = -n + \widehat{\exp}(b) - \phi_*(a)\cdot[b] + c([\phi] , a)$$
The precise statement and definitions can be found in Theorem~\ref{theorem:sl-formula}. 
Interestingly the function $c([\phi], a)$ in the formula reveals an unexpected relationship between the self-linking number, an invariant in contact geometry, and the Johnson-Morita homomorphism in mapping class group theory. 
We discuss this in detail in \S\ref{sec:surface-one-boundary}.

Our formula generalizes Bennequin's self-linking number formula \cite{Ben} of a braid in the open book $(D^2, id)$, that is a usual  closed braid in $\RR^3$ around the $z$-axis. 
Bennequin's formula is:
$$sl(b) = -n + \exp(b)$$
where $\exp(b)$ (with no `hat' over $\exp$) is the exponent sum of a braid word representing $b$. 
When the page surface $S$ is an annulus Kawamuro and Pavelescu \cite{kp} show that: 
$$sl(b, [\Sigma])= -n + \exp(b) - \phi_*(a)\cdot [b]$$
Moreover if $S$ is planar Kawamuro \cite{k} shows that
$$sl(b, [\Sigma])= -n + \exp(b) - \phi_*(a)\cdot [b] + c'(\phi, a)$$
where the function $c'$ is a part of the function $c$ and the gap of $c$ and $c'$ is essentially the Johnson-Morita homomorphism mentioned above. 
We can see that the formula gets more complicated as the topology of $S$ gets complicated.

The self linking number is not merely an invariant of knots and links in contact manifolds. 
By the following celebrated Bennequin-Eliashberg inequality \cite{el2}, one can use the self-linking number to determine tightness or overtwistedness of a given contact structure:

\quad \\
\noindent
\textbf{Theorem~\ref{theorem:BEinequality}.} \cite{el2} 
{\em If a contact 3-manifold $(M,\xi)$ is tight, then for any null-homologous transverse link $L$ and its Seifert surface $\Sigma$, we have}: 
\[ sl(L,[\Sigma]) \leq -\chi(\Sigma) \]

Our second application of open book foliations to contact geometry is to give an alternative combinatorial proof to the above Bennequin-Eliashberg inequality. 
Because of its rigidity, an open book foliation is effective to visualize or construct surfaces like overtwisted discs. 
In fact we define a {\em transverse overtwisted disc} (Definition~\ref{def:trans-ot-disc}), a notion corresponding to an overtwisted disc in contact geometry, and we use it to reprove the Bennequin-Eliashberg inequality.

\subsection{Origins of open book foliation} 
\label{sec:origin}

{\quad }\\
In this section we briefly review braid foliations and characteristic foliations. 
We generalize braid foliations to open book foliations. 
On the other hand, many applications of open book foliations are derived from problems in characteristic foliation theory.

\subsubsection{Braid foliations} 

{\quad }\\
In Birman and Menasco's braid foliation theory \cite{BMiv, BMii, BMv, BMi, BMvi, BMiii, bm1}, braids are geometric objects.  
Let $A$ be an oriented unknot in $S^{3}$.  
We regard $S^{3}$ as $\RR^{3} \cup \{ \infty \}$ and identify $A$ with the union of the $z$-axis and the point $\{\infty\}$. 
As is well-known, $A$ is a fibered knot. 
With the cylindrical coordinates $(r, \theta, z)$ of $\RR^{3}$, the fiberation $\pi: S^{3}\setminus A \rightarrow S^{1}$ is given by the projection $(r, \theta, z) \mapsto \theta$.
An oriented link $L \subset S^3$ is called a {\em closed braid} with respect to $A$ (and $\pi$) if $L$ is disjoint from $A$ and positively transverse to each fiber $S_\theta=\pi^{-1}(\theta)$. 
In other words, $L$ winds around the $z$-axis in the positive direction.

Consider an incompressible Seifert surface $F$ of a braid $L$, or an essential closed surface $F \subset S^{3}\setminus L$. 
The intersection of $F$ and the fibers $\{S_{\theta}  \: | \: \theta \in S^1 \}$ induces a singular foliation $\mF$ on $F$. 
We can put $F$ in a position so that $\mF$ satisfies the following conditions: 
\begin{description}
\item[(i)] 
The $z$-axis pierces $F$ transversely in finitely many points around which the foliation $\mF$ is radial.

\item[(ii)] 
The leaves of $\mF$ along $\partial F$ are transverse to $\partial F$. 

\item[(iii)] 
All but finitely many fibers $S_\theta$ meet $F$ transversely.
Each exceptional fiber is tangent to $F$ at a single point.

\item[(iv)] 
All the tangencies of $F$ and fibers are saddles. 
\end{description}
This $\mF$ is called a {\em braid foliation} on the surface $F$. 
(Later in \S\ref{sec:def of OB} we borrow Birman and Menasco's axioms (i)--(iv) to define open book foliations.) 

The foliation $\mF$ encodes both topological and algebraic information of the closed braid $L$. 
For example, let $L$ be the closure of the braid word $\sigma_1$ in the Artin braid group $B_2$ and $F$ its Bennequin surface consisting of two discs and one positively twisted band.
The surface $F$ and its braid foliation $\mF$ are depicted in Figure~\ref{fig:braidfoliation}-(a). 
(The meaning of $\oplus$ will be made clear in Section \ref{subsubsec:sign}). 
We collapse the twisted band and the upper disc to get the trivial braid as in Figure~\ref{fig:braidfoliation}-(b). 
Algebraically this corresponds to {\em destabilization} of $\sigma_1$ and the topology of $\mF$ indicates that $L$ is destabilizable.
\begin{figure}[htbp]
\begin{center}
\SetLabels
\endSetLabels
\strut\AffixLabels{\includegraphics*[scale=0.5, width=100mm]{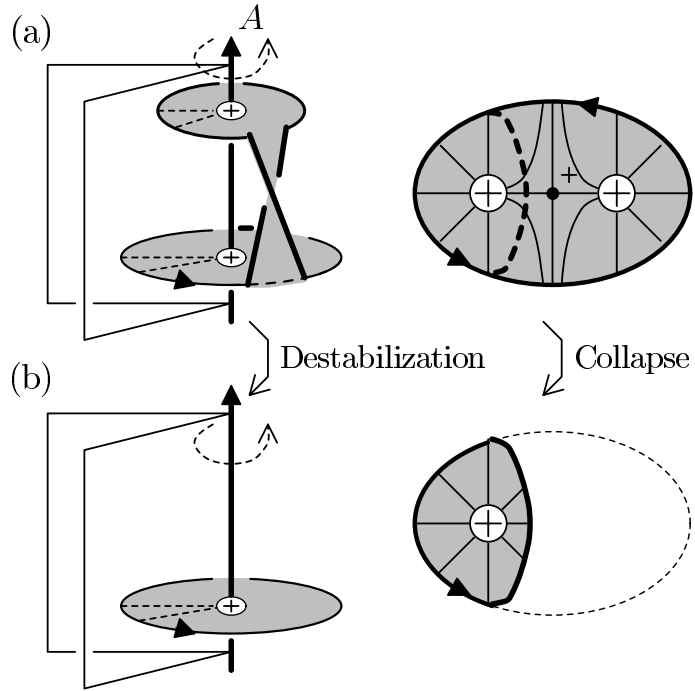}}
\caption{Example: Braid foliation and destabilization.}\label{fig:braidfoliation}
\end{center}
\end{figure}

In general if $\mF$ can be ``simplified'' then $L$ is also ``simplified''. 
Moreover, as the above example suggests, a simplification of $\mF$ can be understood as a certain braid operation.
Therefore, by studying $\mF$ one may find a sequence of braid operations to get the ``simplest'' braid representative of $L$.

Braid foliations have numerous applications to study of knots and links in $S^3$ \cite{BH, BMiv, BMii, BMv, BMi, BMvi, BMiii, bm1}. 
Moreover, via the correspondence \cite{Ben} between the transverse links in the standard contact $S^3$ and the closed braids around the $z$-axis, braid foliations are used to solve problems in contact geometry, in particular, detecting transversely non-simple links \cite{bm2, bm3, BW, LM}. 
(Here a topological link type $\mathcal L$ is called {\em transversely simple} if the transverse link representatives of $\mathcal L$ can be completely classified by an invariant called the self-linking number). 

In \cite{BMiii}, Birman and Menasco study the set of $3$-braids and prove that two closed $3$-braids representing topologically the same link are related to each other by the so called {\em flype} move. 
This is the key to their construction of transversely non-simple $3$-braid links \cite{bm2}. 
In \cite{BMv} they prove that every closed braid representative of the unknot can be deformed into the one-stranded braid by a sequence of exchange moves and destabilizations. 
Based on this, Birman and Wrinkle \cite{BW} give an alternative topological proof, first proven by Eliashberg and Fraser~\cite{ef}, that the unknot in $(S^{3},\xi_{std})$ is transversely simple. 

It should be pointed out that the braid foliation is not too difficult to see or illustrate once we understand how a surface is embedded. 
This contrasts strikingly with the flexibility of the characteristic foliations which we describe next.

\subsubsection{Characteristic foliations}

{\quad }\\
Let $(M, \xi)$ be a closed contact $3$-manifold. 
Let $F \subset M$ be an oriented embedded surface, usually either closed or with Legendrian boundary. (A convex surface with transverse boundary is established by Etnyre and Van Horn-Morris \cite[Section 2]{ev}.)
Integrating the vector field $\xi \cap TF$ on $F$ we get a singular foliation $\cF(F)$ on $F$ called the {\em characteristic foliation}.
If two contact structures induce the same characteristic foliation on $F$ then they are isotopic near $F$.

A surface $F$ is called {\em convex} if there exists a vector field $v$ whose flow preserves $\xi$ and is transverse to $F$.
The {\em dividing set} \cite{giroux1} on a convex surface $F$ is a multi-curve  defined by $\{ p \in F \: | \: v_p \subset \xi_p \}$.  
Giroux's flexibility theorem \cite{giroux1} \cite{ho} states that it is the isotopy type of a dividing set (not an individual characteristic foliation compatible with the dividing set) that encodes information of the contact structure near $F$.  
If two contact structures induce isotopic dividing sets on $F$ then they are isotopic near $F$.

In \cite{ho}, Honda introduces {\em bypass attachment} which allows us to modify dividing sets in controlled manner. 
With careful examination of dividing sets one can apply topological techniques such as gluing and cutting contact $3$-manifolds along convex surfaces.
This leads to various results in contact geometry. 
For example, Etnyre and Honda prove the non-existence of tight contact structures on a Poincar\'e homology sphere \cite{eh2}.
They also prove transverse non-simplicity of the $(2,3)$-cable of the $(2,3)$-torus knot by classifying its Legendrian representatives \cite{eh3}. 
Later, LaFountain and Menasco \cite{LM} establish Legendrian and transversal ``Markov theorem without stabilization'' for the above knot by using both braid foliation and convex surface techniques.

In practice, except for certain simple cases, it is not very easy to grasp the entire picture of a characteristic foliation and a dividing set.  
It is also not very clear how they change under isotopies of surfaces. 
In contrast, the structural stability theorem that we prove in \S\ref{sec:fromOB} allows us to visualize a characteristic foliation through an open book foliation.

\section{Basics of open book foliation}
\label{sec:OBfoliations}

In this section we define open book foliations and develop basic machinery by applying (sometimes with modifications) existing notions in braid foliation theory. 

Hence most of our definitions in this section can be found in Birman and Menasco's papers \cite{BMiv}-\cite{bm2}. 
We also cite Birman and  Finkelstein's paper \cite{bf} because it is a concise survey of braid foliation theory and conveniently contains all the basic notions we want to borrow.

\subsection{Definition of open book foliation}\label{sec:def of OB}  

{ \quad } \\
An {\em open book} $(S,\phi)$ is a compact surface $S$ with non-empty boundary $\partial S$ along with a diffeomorphism $\phi \in \Aut(S, \partial S)$ fixing the boundary pointwise.
Given an open book $(S,\phi)$ we define a closed oriented $3$-manifold $M=M_{(S,\phi)}$ by
\[ M_{(S,\phi)} = M_{\phi} \bigcup \left(\coprod_{|\partial S|} D^{2}\times S^{1} \right), \]
where $M_{\phi}$ denotes the mapping torus $S \times[0,1] \slash (x,1) \sim (\phi(x),0)$ and the solid tori are attached so that for each point $p \in \partial S$ the circle $\{p\} \times S^{1} \subset \partial M_\phi$ bounds a meridian disc of $D^2 \times S^1$. 
If a closed oriented manifold $M$ is homeomorphic to $M_{(S, \phi)}$ we say that $(S,\phi)$ is an {\em open book decomposition} of the manifold $M$.
For example, $M_{(D^2, \id)} \cong S^{3}$. 
The union of core circles of the attached solid tori, $B$, is called the {\em binding} of the open book.
Let $\pi: M \setminus B \rightarrow S^{1} = \RR \slash \Z$ denote the fibration. The fibers $\pi^{-1}(t)=:S_t$ where $t \in [0,1)$ are called the {\em pages} of the open book.

We say that an oriented link $L$ in $M_{(S,\phi)}$ is in {\em braid position} with respect to the open book $(S,\phi)$ if $L$ is disjoint from the binding and positively transverses each page $S_{t}$. 
This generalizes the familiar concept of braid position for $M_{(D^2, \id)} \simeq S^3$.

Let $F$ be an oriented, connected, compact surface smoothly embedded in $M_{(S, \phi)}$ whose boundary $\partial F$ (if it exists) is in braid position w.r.t. the open book $(S, \phi)$.

Consider the singular foliation $\mF=\mF(F)$ on $F$ induced by the the pages $\{S_t \ | \ t \in S^1\}$.
That is, $\mF$ is obtained by integrating the singular vector field $\{T_p S_{t} \cap T_pF\}_{p\in F}$ on $F$. 
We call each connected component of the integral curves a {\em leaf}. 
We may regard the leaves as $F \cap S_t$. 
By standard general position arguments (see \cite{Hirsch} for example) the surface $F$ can be perturbed while the braid isotopy class of $\partial F$ is fixed (if $\partial F$ is non-empty) so that $F$ satisfies the same conditions in \cite[p.23]{BMi}, namely 
\begin{description}
\item[($\mF$ i)] 
The binding $B$ pierces the surface $F$ transversely in finitely many points. 
Moreover, $p \in B \cap F$ if and only if there exists a disc neighborhood $N_{p} \subset \Int(F)$ of $p$ on which the foliation $\mF(N_p)$ is radial with the node $p$ (see the top sketches in Figure~\ref{fig:sign}). 
We call the singularity $p$ an {\em elliptic} point. 

\item[($\mF$ ii)] 
The leaves of $\mF$ along $\partial F$ are transverse to $\partial F$. 

\item[($\mF$ iii)] 
All but finitely many fibers $S_{t}$  intersect $F$ transversely.
Each exceptional fiber is tangent to $\Int(F)$ at a single point.
In particular, $\mF$ has no saddle-saddle connections.

\item[($\mF$ iv$'$)] 
The type of a tangency in {\bf ($\mF$ iii)} is saddle or local extremum. 
\end{description}

\begin{definition}
\cite[p.23]{BMi}
We say that a page $S_{t}$ is {\it regular} if $S_{t}$ intersects $F$ transversely and it is {\it singular} otherwise.
Similarly, a leaf $l$ of $\mF$ is called {\it regular} if $l$ does not contain a tangency point. 
\end{definition}

The arguments in \cite[p.272-273]{bf} imply the following:

\begin{proposition}\label{classification of regular leaves}
Since $\partial F$ is in braid position $($if $\partial F$ is non-empty$)$, no regular leaf of $\mF(F)$ has both of its endpoints on $\partial F$.
Hence, the regular leaves of $\mF$ are classified into the following three types:
 \begin{enumerate}
 \item[$a$-arc]: An arc where one of its endpoints lies on $B$ and the other lies on $\partial F$.
 \item[$b$-arc]: An arc whose endpoints both lie on $B$.
 \item[$c$-circle]: A simple closed curve.
 \end{enumerate} 
\end{proposition}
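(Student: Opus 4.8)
The plan is to analyze a regular leaf $l$ of $\mF(F)$ by following it through a single regular page $S_t$ and recording where its endpoints can lie. Since $l \subset F \cap S_t$ is a connected $1$-manifold embedded in the surface $F$, it is either a properly embedded arc or a simple closed curve; in the latter case we land immediately on the $c$-circle case, so assume $l$ is an arc. An endpoint $q$ of $l$ is a point where the leaf ``exits'' $F$, and there are only two ways this can happen: either $q \in \partial F$, or $q$ lies on the binding $B$. Indeed, if $q \in \Int(F)$ and $q \notin B$, then near $q$ the page $S_t$ meets $F$ transversely along a $1$-manifold with no endpoint at $q$ (by condition $(\mF\,\mathrm{iii})$, since $l$ is regular and hence avoids the tangency), contradicting that $q$ is an endpoint; and near an elliptic point $p \in B \cap F$ the foliation is radial by $(\mF\,\mathrm{i})$, so leaves do terminate there. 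Hence each endpoint of $l$ lies on $\partial F \cup B$, giving a priori three combinatorial types: both endpoints on $B$ (a $b$-arc), one on each of $B$ and $\partial F$ (an $a$-arc), or both on $\partial F$.

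The key step is to rule out the third type, and this is exactly where the braid position hypothesis on $\partial F$ is used. Suppose for contradiction that a regular leaf $l$ has both endpoints on $\partial F$. By condition $(\mF\,\mathrm{ii})$ the leaves meet $\partial F$ transversely, and since $\partial F$ is positively transverse to every page $S_t$, the surface $F$ near $\partial F$ sits on the positive side of each page. I would make the transversality of $\partial F$ with $S_t$ quantitative in terms of the $t$-coordinate (the $S^1$-valued fibration coordinate, pulled back to $F$): along $\partial F$ the function $t$ is strictly increasing, so $\partial F$ can be coherently co-oriented by ``increasing $t$''. A regular leaf $l$ lies in a single level set $\{t = t_0\}$ of this function restricted to $F$. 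If both endpoints of $l$ were on $\partial F$, then pushing $l$ slightly to the $t > t_0$ side and slightly to the $t < t_0$ side produces, near each endpoint, points of $F$ on both sides of the page through that endpoint — but the braid condition forces $\partial F$ to cross $S_{t_0}$ transversely in a single coherent direction, so a collar neighborhood of that endpoint of $\partial F$ lies entirely on one side. Comparing the two endpoints, one obtains a contradiction: the arc $l$ together with a subarc of $\partial F$ would bound a region of $F$ on which the page coordinate $t$ attains an interior maximum or minimum that is not an allowed singularity of $\mF$, contradicting $(\mF\,\mathrm{iv}')$ together with the finiteness of singular pages. This is the argument indicated in \cite[p.272--273]{bf}, and I would simply adapt it verbatim, since the axioms $(\mF\,\mathrm{i})$--$(\mF\,\mathrm{iv}')$ were chosen precisely to mirror Birman--Menasco's.

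The main obstacle is making the ``coherent side'' argument fully rigorous without invoking the Morse-theoretic picture of $\mF$ that has not yet been developed at this point in the paper; the cleanest route is to argue locally in a fibered chart. Near a point $q \in \partial F$ I would choose coordinates in which $S_t = \{ \text{second coordinate} = t\}$ locally and $\partial F$ is the graph of a function with nonzero derivative in the $t$-direction (braid position), so that $F$ near $q$ is a half-disc lying in $\{t \geq t(q)\}$ or $\{t \leq t(q)\}$ — and the sign is the same at every boundary point because $\partial F$ is positively transverse to all pages. Then a regular arc $l$ in the level set $\{t = t_0\}$ meeting $\partial F$ at $q$ forces $t_0 = t(q)$ and forces $l$ to emanate into $F$ from the boundary; if the other endpoint $q'$ were also on $\partial F$, then $t(q') = t_0 = t(q)$, and restricting $t$ to the closed region of $F$ cobounded by $l$ and a boundary arc between $q$ and $q'$ gives a smooth function equal to $t_0$ on part of the boundary and transverse to the boundary elsewhere, whose critical points are the singularities of $\mF$ inside — all extrema or saddles — which by a standard index/Euler-characteristic count cannot exist in the required configuration. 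Once the third type is excluded, the stated trichotomy into $a$-arcs, $b$-arcs, and $c$-circles follows immediately from the endpoint analysis of the first paragraph. \qed
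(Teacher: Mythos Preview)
The paper does not supply its own proof here, only citing \cite[p.272--273]{bf}; the standard braid-foliation argument is purely local and uses leaf orientations, and your proposal does not find it. Your central claim --- that near $q \in \partial F$ the surface $F$ ``is a half-disc lying in $\{t \geq t(q)\}$ or $\{t \leq t(q)\}$'' --- is false: $\partial F$ is \emph{transverse} to $S_{t_0}$ at $q$, so both $\partial F$ and $F$ have points with $t > t_0$ and with $t < t_0$ arbitrarily near $q$; the surface meets the page transversely along the entire leaf $l$, including at its endpoints, so nothing lies ``entirely on one side''. The fallback region-bounding argument also fails: $l$ together with a subarc of $\partial F$ need not bound any subregion of $F$ (e.g.\ if $l$ is nonseparating in $F$, or its endpoints lie on distinct components of $\partial F$), and even when it does, $(\mF\,\mathrm{iv}')$ explicitly permits local extrema, so an interior maximum or minimum of $t$ is no contradiction. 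The unspecified ``standard index/Euler-characteristic count'' cannot repair this.

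The actual argument is a one-line orientation check. At any nonsingular point of a leaf one has the canonical tangent vector $X_{ob} = \vec{n}_S \times \vec{n}_F$ (this is exactly Definition~\ref{def ori of leaf}, which the paper introduces just after the proposition). A local computation at $q \in l \cap \partial F$, using that the positive tangent to $\partial F$ has positive $\partial_t$-component (braid position) and that $\partial F$ carries the boundary orientation from $F$, shows that $X_{ob}$ points \emph{out} of $F$ through $\partial F$ at $q$. Since this holds at every point of $\partial F$, an oriented arc $l$ cannot have both endpoints on $\partial F$: it would have to exit at both ends, but an oriented arc has exactly one terminal point.
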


\begin{definition}
We say that the singular foliation $\mF(F)$ is an {\em open book foliation} if the above conditions {\bf ($\mF$ i, ii, iii)} and the following condition {\bf ($\mF$ iv)}, which is stronger than {\bf ($\mF$ iv$'$)}, are satisfied and we denote it by $\F(F)$.
\begin{description}
\item[($\mF$ iv)] 
All the tangencies of $F$ and fibers are of saddle type (see the bottom sketches of Figure~\ref{fig:sign}). 
We call them {\em hyperbolic} points.
\end{description}
\end{definition}

\begin{remark}
Here we list differences between the braid foliation and the open book foliation. 
\begin{enumerate}
\item 
For braid foliations the ambient manifold $M$ is $S^3$, whereas for  open book foliations $M$ can be any closed oriented $3$-manifold. 

\item
In braid foliation theory each regular leaf $l \subset S_{t}$ is required to be {\em essential} in $S_{t}\setminus (S_{t} \cap \partial F)$ \cite[Theorem 1.1]{bf}. 
In open book foliation theory we relax this restriction, so   a regular leaf can be inessential, i.e., $F$ can be compressible.

We do this for the following reasons: 
First, we prefer to establish basics of open book foliations under less restrictive conditions. 
Second, characteristic foliations, which share common properties with open book foliations, also contain inessential circles. 
Third, in some cases it is more convenient and natural to allow inessential leaves: 
For example, we will see in Proposition \ref{prop:no-c-circle}, one can remove c-circles at the cost of introducing inessential leaves.
(In \cite{ik} we study open book foliations all of whose  b-arc leaves are essential and give several applications to topology of 3-manifolds.) 
\end{enumerate}
\end{remark}

The open book foliation is intrinsic in the following sense:

\begin{theorem}
\label{prop:of}
If {\bf ($\mF$ i, ii, iii, iv')} are satisfied then {\bf ($\mF$ iv)} holds. 
Namely, with an ambient isotopy $($that fixes $\partial F$ if it exists$)$, every surface $F$ admits an open book foliation $\F(F)$. 
\end{theorem}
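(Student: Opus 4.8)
The plan is to remove local extrema of $\mF(F)$ one at a time by a local ambient isotopy supported near the singular page, trading each maximum/minimum for a pair of saddles (or for nothing, when the extremum is genuinely removable), and to iterate until only hyperbolic tangencies remain. First I would fix a local extremum $p$, say a maximum, occurring on a singular page $S_{t_0}$. Near $p$ the surface $F$ looks like a graph $z=-(x^2+y^2)$ over a disc in $S_{t_0}$, where $z$ is a coordinate transverse to the pages parametrizing $t$ near $t_0$; for $t$ slightly less than $t_0$ the intersection $F\cap S_t$ is a small $c$-circle bounding a disc $D_t\subset F$, and this family of circles shrinks to $p$ as $t\uparrow t_0$ and is empty for $t>t_0$. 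The idea is to push this little cap of $F$ across the page $S_{t_0}$: replace the portion of $F$ lying in the slab $t\in[t_0-\e,t_0+\e]$ by a new disc that realizes the same boundary but whose $\mF$-singularities are controlled.

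The key steps, in order, are: (1) Isolate a singular page $S_{t_0}$ carrying exactly one local extremum $p$ and no other tangency, using ($\mF$ iii); choose a product neighborhood $S_{t_0}\times[-\e,\e]\cong N$ of $S_{t_0}$ in $M\setminus B$ in which the pages are the slices $S_{t_0}\times\{s\}$, so that inside $N$ the problem becomes a statement about a surface in $\RR^2\times[-\e,\e]$ foliated by horizontal slices. (2) Inside $N$, the piece $F\cap N$ near $p$ is a disc capping off a $c$-circle $\gamma=F\cap(S_{t_0-\e})$; by an isotopy of $F$ rel $\partial(F\cap N)$ and rel the binding, replace this cap by a ``Morse-minimal'' disc with the standard height function $s|_{F\cap N}$ having only saddle critical points. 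Concretely one takes the trace of $\gamma$ flowing monotonically in $s$ and then caps it off by a disc that is transverse to all but finitely many slices with only saddle tangencies — equivalently, one perturbs the graph $z=-(x^2+y^2)$ to a generic height function on the disc with no extrema in the interior, which is possible precisely because a disc admits a Morse function with a single critical point and, after handle slides / cancellation at the boundary, one can instead arrange all interior critical points to be index $1$. (3) Check that the new surface still satisfies ($\mF$ i, ii, iii): the binding intersections are untouched since the isotopy is supported in $N\subset M\setminus B$, the boundary $\partial F$ is untouched, and genericity gives ($\mF$ iii). (4) Count: each such move strictly decreases the number of local extrema of $\mF(F)$, so finitely many moves yield ($\mF$ iv). Finally (5), invoke ($\mF$ i, ii, iii, iv) together with Proposition~\ref{classification of regular leaves} to conclude $\F(F)$ is an open book foliation.

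The main obstacle I expect is step (2): making precise the claim that the capping disc can be chosen with only saddle tangencies while keeping the modification localized and keeping $\partial F$ and $B$ fixed. A local extremum of $\mF(F)$ need not be ``removable'' as a critical point in the naive sense — one cannot in general just cancel it — so what one really does is convert it. The honest statement is that a local extremum of the height function $s$ on a disc can be replaced, rel boundary, by the standard picture in which $s$ has one saddle for each handle in a handle decomposition of the disc with no $0$-handles in the interior; since a disc has a handle decomposition with a single $1$-handle relative to a collar of its boundary, the cap can be isotoped so that $s|_{\text{cap}}$ has exactly one saddle and no extremum, at the cost of introducing one extra hyperbolic point on a nearby page. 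One must be careful that this replacement disc is still embedded in $N$ and meets each slice in arcs and circles consistent with ($\mF$ iii); this is a routine but slightly delicate transversality-and-Morse-theory argument, and it is where the bulk of the work lies. An alternative, perhaps cleaner, route for step (2) is to cite the standard fact (as in \cite{BMi}, \cite{bf}) that a generic surface in an open book has a ``movie'' whose only non-generic frames are births/deaths of $c$-circles and saddles, and then to observe that a birth/death frame can always be pushed off the end of the movie or merged into a neighboring saddle frame; I would likely present the proof this way, deferring the Morse-theoretic details to the cited braid-foliation literature since conditions ($\mF$ i)--($\mF$ iv$'$) are imported verbatim from there.
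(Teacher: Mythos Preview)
Your step (2) contains a genuine obstruction that cannot be repaired without changing the whole approach. The cap near a local maximum is a disc $D$ whose entire boundary $\gamma$ lies in a single page $S_{t_0-\e}$ at the minimum height. Any Morse function on $D$ with $\partial D$ a regular level at the minimum satisfies $\chi(D)=\sum_{p}(-1)^{\mathrm{ind}(p)}=1$, summed over interior critical points; hence the interior critical points cannot all be saddles. Your claim that ``a disc has a handle decomposition with a single $1$-handle relative to a collar of its boundary'' is simply false: relative to a collar of its boundary circle, a disc is built by attaching a single $2$-handle, which corresponds to a local extremum, not a saddle. Consequently no isotopy supported in your product neighborhood $N\subset M\setminus B$ can trade the extremum for saddles only, and the inductive count in step (4) never gets off the ground.

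The paper's proof circumvents this obstruction by \emph{not} keeping the isotopy away from the binding. It performs a \emph{finger move}: choose an arc $\gamma$ in the singular page $S_{t_0}$ from $p$ to a binding component and push a neighborhood of $p$ along $\gamma$ until the surface meets the binding. This replaces the extremum by one hyperbolic point together with a pair of new \emph{elliptic} points (plus further cancelling $\pm$ pairs wherever $\gamma$ crosses other sheets of $F$). The new elliptic points carry the Euler-characteristic contribution that the extremum had been carrying --- exactly what your binding-avoiding modification could never supply. Your fallback of deferring to the braid-foliation literature also fails here: the paper explicitly notes that the Birman--Menasco and Roussarie--Thurston arguments rely on incompressibility of $F$, which is not assumed in this setting.
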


We prove Theorem~\ref{prop:of} in \S~\ref{subsec:pf of thm}. 
At a glance this theorem is similar to \cite[Lemma~2]{BMi}. 
However we allow our pages to be of type $S_{g, r}$ (rather than  $D^2$) and moreover we allow $F$ to be compressible. 
As a result Birman and Menasco's proof (that is a refined argument of Bennequin's \cite{Ben} with much more details) does not apply. 
For the same reason, Roussarie-Thurston's argument \cite{T1} does not work either. 

As a byproduct of the proof of Theorem~\ref{prop:of} we obtain:

\begin{proposition}
\label{prop:no-c-circle}
Given an open book foliation $\F(F)$ we can perturb $F$  $($fixing $\partial F$ if it exists$)$ so that the new $\F(F)$ contains no $c$-circles. 
\end{proposition}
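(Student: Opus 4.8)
The plan is to remove $c$-circles one family at a time, by an ambient isotopy of $F$ (fixing $\partial F$) that pushes a finger of the family across a component of the binding, so that its circle leaves are cut open and reappear as $b$-arcs. This finger move across the binding is essentially the move used in the proof of Theorem~\ref{prop:of} to destroy local extrema, which is why Proposition~\ref{prop:no-c-circle} falls out of that proof; as noted in the preceding remark, the price is that the new $b$-arcs are inessential.

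\emph{Organizing the $c$-circles.} Since $\F(F)$ has only finitely many singular points, its $c$-circle leaves form finitely many maximal families. A family is either a maximal subsurface $R\subset\Int(F)$ foliated entirely by $c$-circles, or, in a degenerate case, all of $F$. In the former case $R$ carries a nonsingular foliation by circles and is a proper subsurface of $F$, so, being orientable, it is an annulus $c\times[-1,1]$ each of whose two boundary circles is adjacent to a singular leaf through a hyperbolic point. In the latter case $F$ is a torus, transverse to every page and disjoint from $B$, and a preliminary perturbation of $F$ near a binding component reduces this to the former case. If there is no $c$-circle family we are done; otherwise fix such an $R$ together with a $c$-circle $c\subset R$ lying in a regular page $S_{t_0}$.

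\emph{The move.} Since the binding is non-empty, a general-position argument lets us join $c$ to some binding component $B_i$ by an arc contained in a regular page (or a thin slab of pages) and meeting $F$ only in its endpoint on $c$; thickening this arc yields an embedded disc $\Delta\subset M$ with $\Int(\Delta)\cap F=\emptyset$ joining a sub-disc of $R$ to $B_i$. Sliding $F$ across $B_i$ along $\Delta$, and then invoking Theorem~\ref{prop:of} to restore the open book foliation conditions, has a purely local effect: $B_i$ acquires two new transverse intersections with $F$, i.e.\ two new elliptic points; the $c$-circles running over the finger are cut open there and become $b$-arcs with both endpoints on $B_i$; and new hyperbolic points are created in exactly the number forced by the identity $\chi(F)=E-H$, where $E$ and $H$ denote the numbers of elliptic and hyperbolic points. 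Outside a neighborhood of $\Delta\cup R$ the foliation is unchanged, and the new $b$-arcs bound discs in their pages, hence are inessential --- which our definition permits.

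\emph{Iteration and the main difficulty.} One then inducts on the number of maximal $c$-circle families. The crux --- and the step genuinely requiring the general-position bookkeeping developed for Theorem~\ref{prop:of} --- is to choose $\Delta$ and perform the slide so that $R$ is actually eliminated rather than merely rearranged, and so that no new $c$-circle family is created: a disc-shaped finger only cuts the circle leaves that run over it, so one must arrange the finger to sweep $B_i$ across the whole annulus $R$, and one must verify that any local extremum a naive push would leave behind is absorbed into the saddle produced at $B_i$. Granting this, finitely many such moves leave $\F(F)$ with no $c$-circles, which proves Proposition~\ref{prop:no-c-circle}.
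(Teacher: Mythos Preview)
Your overall strategy---organize the $c$-circles into maximal annuli and destroy each by a finger move to the binding---is the same as the paper's. But there is a real gap in your execution, and it is precisely at the point you flag as ``the crux.''

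You require that the guiding arc from $c$ to $B_i$ meet $F$ only at its endpoint, and that the resulting disc $\Delta$ satisfy $\Int(\Delta)\cap F=\emptyset$. This cannot be arranged in general. The $c$-circle $c\subset S_{t_0}$ may be separated from every boundary component of $S_{t_0}$ by other leaves of $F\cap S_{t_0}$ (nested $c$-circles, $b$-arcs, etc.), so any arc from $c$ to the binding in that page must cross $F$. Worse, the annulus $R$ consists of $c$-circles $c_t$ over an entire $t$-interval, and $R$ may wind around the binding in a complicated way; a ``triangle'' of page-arcs joining all of $R$ to a single binding point will typically be forced to intersect $F$ in its interior. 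Your own last paragraph concedes that the finger must sweep across the whole annulus, but you never supply the mechanism, and the disjointness condition you imposed on $\Delta$ obstructs it.

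The paper's proof drops the disjointness requirement entirely. For each $c$-circle annulus $A$ it chooses finitely many transverse arcs $\alpha_i\subset A$ whose union meets every $c$-circle of $A$, and for each $\alpha_i$ a one-parameter family $\{\lambda^i_t\subset S_t\}$ of page-arcs to a fixed binding point $p_i$; these $\lambda^i_t$ are required only to avoid hyperbolic points and to be nowhere tangent to the foliation---they \emph{are} allowed to cross $F$ transversely. The finger move along each triangle $\Delta_i=\{\lambda^i_t\}$ then destroys all $c$-circles through $\alpha_i$, at the cost of introducing extra pairs of $\pm$ elliptic and $\pm$ hyperbolic points wherever $\Delta_i$ crosses other sheets of $F$ (the bottom row of Figure~\ref{foliation finger move}). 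The key observation, which replaces your disjointness hypothesis, is that a finger move never creates new $c$-circles, so the induction still terminates.
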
 

We prove Proposition~\ref{prop:no-c-circle} also in \S~\ref{subsec:pf of thm}. 
This is a useful proposition that allows us to convert an open book foliation into Morse-Smale type. 
In this paper we use Proposition~\ref{prop:no-c-circle} many times, including a new proof of the Bennequin-Eliashberg inequality.

\subsubsection{Signs of singularities, describing arcs, and orientation of leaves.}\label{subsubsec:sign}

\begin{definition} \cite[p.19]{Ben} \cite[p.280]{bf} 
We say that an elliptic singularity $p$ is {\em positive} ({\em negative}) if the binding $B$ is positively (negatively) transverse to $F$ at $p$.
The sign of the hyperbolic singularity $p$ is {\em positive} ({\em negative}) whether the orientation of the tangent plane $T_p F$ does (does not) coincide with the orientation of $T_p S_t$. 

See Figure \ref{fig:sign}, where we describe an elliptic point by a hollowed circle with its sign inside, a hyperbolic point by a black dot with the sign indicated nearby, and positive normals  $\vec n_F$ to $F$ by dashed arrows. 
\begin{figure}[htbp]
 \begin{center}
\includegraphics[width=130mm]{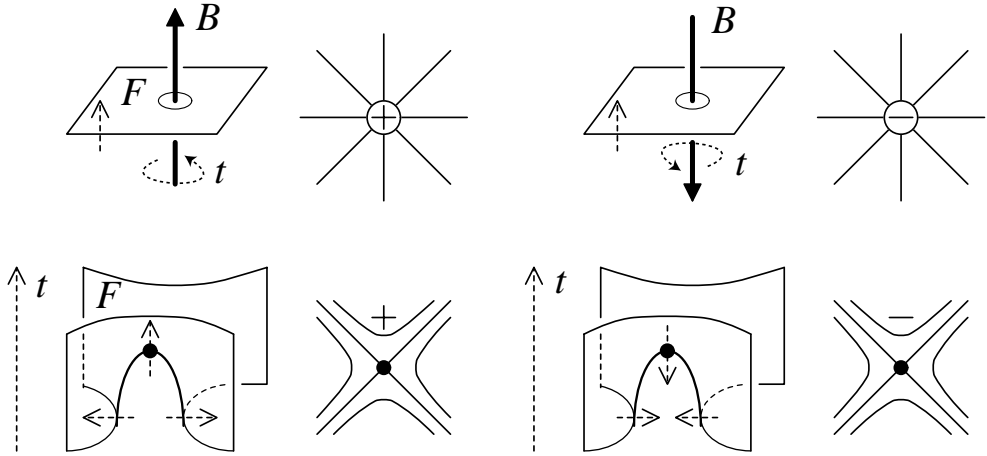}
 \end{center}
 \caption{Signs of singularities and normal vectors $\vec n_F$.}
  \label{fig:sign}
\end{figure}
\end{definition} 

With this definition, we observe that:

\begin{claim}\label{sign-observation}
The elliptic point at the end of every $a$-arc is positive, and
the endpoints of every $b$-arc have opposite signs.
\end{claim}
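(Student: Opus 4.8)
The plan is to read both assertions off the orientation of $F$ and the induced orientation on the leaves of $\F(F)$ --- the first statement via the local picture near the binding $B$, the second via braid position of $\partial F$. \emph{Step 1: the local model at an elliptic point.} Fix $p\in B\cap F$ and choose cylindrical coordinates $(r,\theta,z)$ on a tubular neighbourhood of the binding component through $p$, so that $B=\{r=0\}$ and the pages are the half-planes $\{\theta=\mathrm{const}\}$. Since $p$ is elliptic, for $\delta$ small the disc $N_p$ is a graph $z=f(r,\theta)$ over $\{\,0\le r\le\delta,\ \theta\in S^1\,\}$ with $f(0,\theta)$ constant, and the radial leaves of $\F(N_p)$ are exactly the rays $\{\theta=\mathrm{const}\}$. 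Comparing the orientation of $F$ on $N_p$ with the ambient orientation, I would read off that every leaf near $p$ is oriented away from $p$ when $p$ is a positive elliptic point and toward $p$ when $p$ is negative (or the reverse, according to the sign convention chosen for orienting leaves; this does not affect what follows).

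\emph{Step 2: braid position near $\partial F$.} For $q\in\partial F$, the positive tangent vector $\tau$ of $\partial F$ satisfies $d\pi_q(\tau)>0$ because $\partial F$ positively transverses the pages, so $\tau$ may be used as the ``increasing-page'' reference vector that orients the leaf through $q$; since the orientation of $F$ near $\partial F$ is $\nu\wedge\tau$, where $\nu\in T_qF$ points out of $F$ across $\partial F$, the oriented leaf through $q$ then points in the direction $\nu$. Hence every leaf of $\F(F)$ that meets $\partial F$ is oriented out of $F$.

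\emph{Step 3: conclusion.} An $a$-arc $\ell$ has one endpoint an elliptic point $p$ and the other on $\partial F$; by Step 2, $\ell$ is oriented out of $F$ at its $\partial F$-endpoint, hence oriented away from $p$, so $p$ is positive by Step 1. A $b$-arc $\ell$ has two elliptic endpoints, necessarily distinct, say $p_1$ and $p_2$; being an oriented arc it runs from one to the other, so near one of them its leaves are oriented away and near the other toward it, whence by Step 1 the two have opposite signs. (If the orientation of leaves is not yet available, I would instead apply Step 1's local model to the embedded disc $R\subset F$ swept out by the leaves in the pages $S_s$, $|s-t_0|\le\e$ with $\e$ small, neighbouring a fixed leaf $\ell\subset F\cap S_{t_0}$: in the $a$-arc case $R$ is a triangle with one vertex the elliptic endpoint of $\ell$ and one side on $\partial F$; in the $b$-arc case a bigon whose two vertices are the elliptic endpoints of $\ell$. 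At each elliptic vertex the boundary orientation $R$ inherits from $F$ runs out of and into the vertex in the order prescribed by the sign of that vertex, while along the $\partial F$-side it runs toward increasing $s$ by braid position; matching these around $\partial R$ forces the claimed signs.)

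I do not expect a serious obstacle here. The content is entirely in fixing the conventions for the ambient, page, boundary and leaf orientations (Steps 1--2) and in the elementary matching of Step 3, and the single place where the hypothesis that $\partial F$ is in braid position is genuinely used is Step 2.
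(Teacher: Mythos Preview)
The paper does not prove this claim: it is stated as an immediate observation following the definition of signs, with no argument given. Your proposal is a correct and carefully written verification of it.

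One organizational remark: the leaf orientation $X_{ob}$ you invoke in Steps~1--3 is introduced in the paper only \emph{after} this claim (in the definition used to prove the Poincar\'e--Hopf formula), so the main line of your argument is, strictly speaking, forward-referencing. You anticipated this and supplied the parenthetical triangle/bigon argument, which works directly from the definition of the sign of an elliptic point and the braid-position hypothesis on $\partial F$; that alternative is the one that fits the paper's logical order, and is essentially the picture one has in mind when calling the claim an ``observation.'' Either way, the content is exactly what you say it is: a matter of matching orientation conventions, with braid position of $\partial F$ used only at the $\partial F$-end of an $a$-arc.
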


\begin{definition}[Describing arc]
\label{defn:describing_arc}
Consider a saddle shape subsurface of $F$ whose leaves $l_1$ and $l_2$ (possibly $l_1=l_2$) as in Figure~\ref{fig:hyperbolic} are sitting on a page $S_t$.  
As $t$ increases (the page moves up) the leaves converge along a properly embedded arc $\gamma \subset S_t$ (dashed in Figure~\ref{fig:hyperbolic}) joining $l_{1}$ and $l_{2}$ and switch configuration. 
See the passage in Figure~\ref{fig:hyperbolic}. 
\begin{figure}[htbp]
\begin{center}
\SetLabels
(.1*.78) $\gamma$\\
(.36*.82) $F$\\
(.52*.55) $l_2$\\
(.05*.86) $l_1$\\
(.18*.86)   $l_2$\\
(.46*.59) $\gamma$\\
(.36*.55) $l_1$\\
(.58*.55)   $S_t$\\
\endSetLabels
\strut\AffixLabels{\includegraphics*[width=100mm]{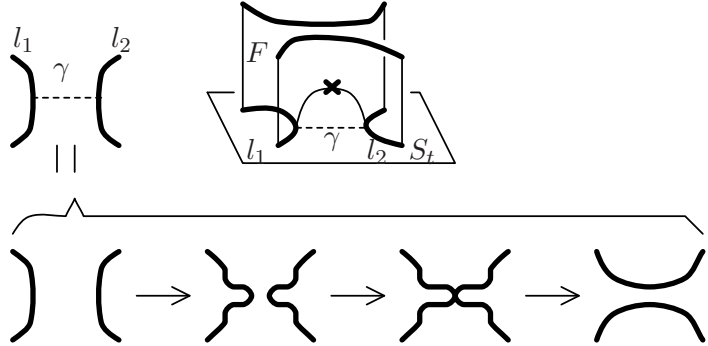}}
 \caption{The describing arc (dashed) for a hyperbolic singularity.}
 \label{fig:hyperbolic}
\end{center}
\end{figure}
We call $\gamma$ the {\em describing arc} of the hyperbolic singularity. 
Up to isotopy, $\gamma$ is uniquely determined. 
We also often put the sign of a hyperbolic point near its describing arc (see Figure \ref{fig:movie}). 
\end{definition}

\begin{definition}\label{def of e}
We denote the number of positive (resp. negative) elliptic points of $\F(F)$ by $e_{+}=e_{+}(\F(F))$ (resp. $e_{-}=e_{-}(\F(F))$). Similarly, the number of positive (resp. negative) hyperbolic points is denoted by $h_{+}=h_{+}(\F(F))$ (resp. $h_{-}=h_{-}(\F(F))$). 
\end{definition}

\begin{proposition}\label{poincare-hopf}
The Euler characteristic of the surface $F$ has
$$\chi(F) = (e_+ + e_-) - (h_+ + h_-).$$
\end{proposition}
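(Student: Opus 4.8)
\textbf{Proof proposal for Proposition~\ref{poincare-hopf}.}
The plan is to apply the Poincar\'e--Hopf index theorem to the singular vector field $v_p = T_p S_t \cap T_p F$ on $F$, which generates the open book foliation $\F(F)$. First I would observe that by conditions {\bf ($\mF$ i, iii, iv)} the only singularities of $v$ in $\Int(F)$ are the elliptic points (nodes, coming from $B \cap F$) and the hyperbolic points (saddles, coming from tangencies of $F$ with pages); there are finitely many of each. Near an elliptic point the foliation is radial by {\bf ($\mF$ i)}, so the local index of $v$ there is $+1$ regardless of the sign of the elliptic point (the sign records the transverse orientation of $B$, not the index). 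Near a hyperbolic point the foliation is a saddle by {\bf ($\mF$ iv)}, so the local index is $-1$, again independent of the sign. Hence the sum of the indices of $v$ is $(e_+ + e_-) - (h_+ + h_-)$.

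Next I would address the boundary. If $\partial F = \emptyset$, the classical Poincar\'e--Hopf theorem for closed surfaces gives $\sum \text{ind} = \chi(F)$ and we are done. If $\partial F \neq \emptyset$, then by {\bf ($\mF$ ii)} the leaves of $\mF$ meet $\partial F$ transversely, so $v$ is nowhere zero and transverse to $\partial F$ along the boundary. One then invokes the version of Poincar\'e--Hopf for manifolds with boundary: for a vector field transverse to $\partial F$ and pointing, say, outward along the whole boundary, $\sum \text{ind} = \chi(F)$; if it points inward along some boundary circles the contribution of those circles to $\chi(\partial F)$ must be accounted for, but since $\chi$ of a circle is $0$, the count is unaffected and we still get $\sum \text{ind} = \chi(F)$. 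Either way, combining with the index computation yields $\chi(F) = (e_+ + e_-) - (h_+ + h_-)$.

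Alternatively, and perhaps more in the combinatorial spirit of the paper, I would give a CW/handle-decomposition argument: using a regular page $S_{t_0}$ to cut $F$ into pieces, or using Proposition~\ref{prop:no-c-circle} to first remove all $c$-circles and put $\F(F)$ in Morse--Smale form, one builds $F$ from a neighborhood of the elliptic points (each a disc, contributing a $0$-handle), then attaches a $1$-handle for each hyperbolic point (the describing arc of Definition~\ref{defn:describing_arc} records the co-core), with the remaining pieces being collars of $b$-arc and $a$-arc families that deformation retract onto what is already built. Counting handles gives $\chi(F) = \#(0\text{-handles}) - \#(1\text{-handles}) = (e_+ + e_-) - (h_+ + h_-)$.

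The main obstacle is the bookkeeping in the boundary case and, if one takes the handle-decomposition route, verifying that the complementary regions (the unions of regular leaves between consecutive singular pages) genuinely contribute nothing to the Euler characteristic --- i.e., that each is a product region or collar. I expect the vector-field/Poincar\'e--Hopf argument to be the cleanest: the only real content is the local index computation ($+1$ at radial nodes, $-1$ at saddles) together with the transversality of $v$ to $\partial F$, both of which are immediate from {\bf ($\mF$ i, ii, iv)}.
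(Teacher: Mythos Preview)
Your first argument is exactly the paper's proof: they define the tangent vector field $X_{ob}$ to the leaves (Definition~\ref{def ori of leaf}) and invoke Poincar\'e--Hopf, noting that by {\bf ($\mF$ iv)} all singularities are elliptic or hyperbolic. You are more careful than the paper about the boundary case (the paper does not spell out that transversality to $\partial F$ makes the boundary contribution vanish), but the approach is the same; your alternative handle-decomposition sketch is extra and not needed.
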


To prove Proposition~\ref{poincare-hopf}, we define {\em orientations} of leaves:

\begin{definition}[Orientation of leaves]\label{def ori of leaf}
Both the surface $F$ and the ambient manifold $M$ are oriented so that the positive normal $\vec n_F$ of $F$ (in this paper we indicate $\vec n_F$ by dashed arrows like in Figure~\ref{fig:sign}) is canonically defined. 
We orient each leaf of $\F(F)$, for both regular and singular, so that if we would stand up on the positive side of $F$ and walk along a leaf, the positive side of the intersecting page $S_t$ of the open book would be on our left. 
In other words, at a non-singular point $p$ on a leaf $l \subset (S_t \cap F)$ let $\vec n_S$ be a positive normal to $S_t$ then $X_{ob}:= \vec n_S \times \vec n_F$ is a positive tangent to $l$. 
As a result, positive/negative elliptic points are sources/sinks of the vector field $X_{ob}$. 
\end{definition}

\begin{proof}[Proof of Proposition~\ref{poincare-hopf}] 
The orientation of the leaves gives a vector field $X_{ob}$ on $F$. 
By the axiom ($\mF$ iv) any singularity of $\F(F)$ is either elliptic or hyperbolic. The statement follows from the Poincar\'e-Hopf theorem. 
\end{proof}

\subsubsection{Proofs of Theorem~\ref{prop:of} and Proposition~\ref{prop:no-c-circle}}\label{subsec:pf of thm}

{ \quad } \\
Since we do not assume incompressibility of the surface $F$, we {\em cannot} directly apply Roussarie-Thurston's general position theorem~\cite[Theorem~4]{T1} or the proof of a corresponding result in braid foliation theory \cite[Lemma 2]{BMi} in order to remove all the local extrema from a foliation satisfying {\bf($\mF$  i, ii, iii, iv')}. 
Instead, we use a trick which we call a finger move. 

\begin{proof}[Proof of Theorem~\ref{prop:of}]

Let $F$ be a surface in a general position such that the singular foliation $\mF=\mF(F)$ satisfies {\bf($\mF$  i, ii, iii, iv')}. We show that we can isotope $F$ so that {\bf($\mF$ iv)} is satisfied.

Let $p$ be a local extremal point on the page $S_{t}$. 
We will replace $p$ with a pair of elliptic points and one hyperbolic point by the following isotopy, which we call a {\em finger move}. Repeating finger moves we can get rid of all the local extrema, i.e., {\bf($\mF$ iv)} is satisfied.

Choose an arc $\gamma$ in $S_{t}$ that joins $p$ and a binding component $B$. See Figure~\ref{Finger move}. 
If $\gamma$ intersects other regular leaves of $\mF(F)$, by small local perturbation we make the intersections transverse. 
Take a small $3$-ball neighborhood $N$ of $\gamma$ (dashed ellipses). 
We may assume that $N$ contains no singularities of $\mF$ other than $p$. 
Push a neighborhood of $p$ along $\gamma$ so that no  changes occur outside the region $N$. 
See the passage in Figure~\ref{Finger move}-(a):  
\begin{figure}[htbp]
\SetLabels
(.3*.3) $N$\\
(.85*.27) $N$\\
(.87*.5) $S_t$\\
(.44*.18) $S_t$\\
(.93*.18) $S_t$\\
\endSetLabels
\strut\AffixLabels{\includegraphics*[width=130mm]{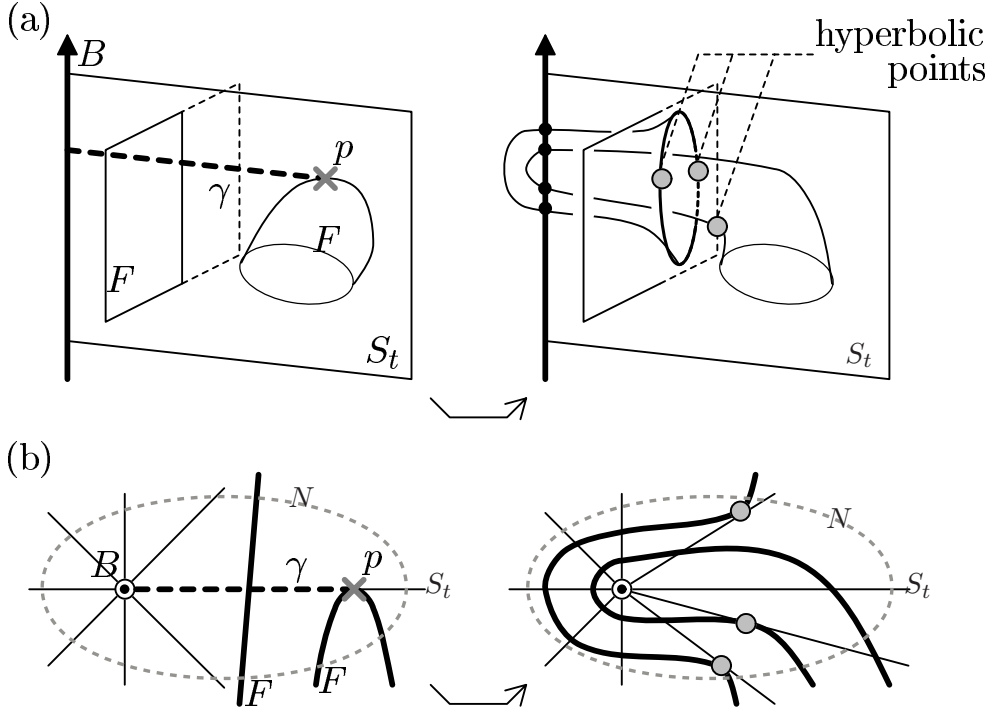}}
\caption{A finger move supported on a small neighborhood $N$ of $\gamma$.}\label{Finger move}
\end{figure}
Call this isotopy a {\em finger move} supported on $N$. 
Figure~\ref{Finger move}-(b) illustrates this finger move viewed from `above' the binding component $B$.

The finger move removes $p$ and introduces new elliptic (black dots in Figure~\ref{Finger move}) and hyperbolic (gray dots) singularities to $\mF$. 
But since the finger move is supported on $N$ no new local extrema are introduced.  
More precisely;  
if a positive normal to $S_t$ agrees (resp. disagrees) with a positive normal to $F$ at $p$, then the finger move introduces one negative (resp. positive) hyperbolic point and a pair of $\pm$ elliptic points. See the top passage in Figure~\ref{foliation finger move}.
For other part of $F$ that is involved in the finger move, a pair of $\pm$ elliptic points and a pair of $\pm$ hyperbolic points are inserted. See the bottom passage of Figure~\ref{foliation finger move}.
\begin{figure}[htbp]
\begin{center}
\includegraphics[width=100mm]{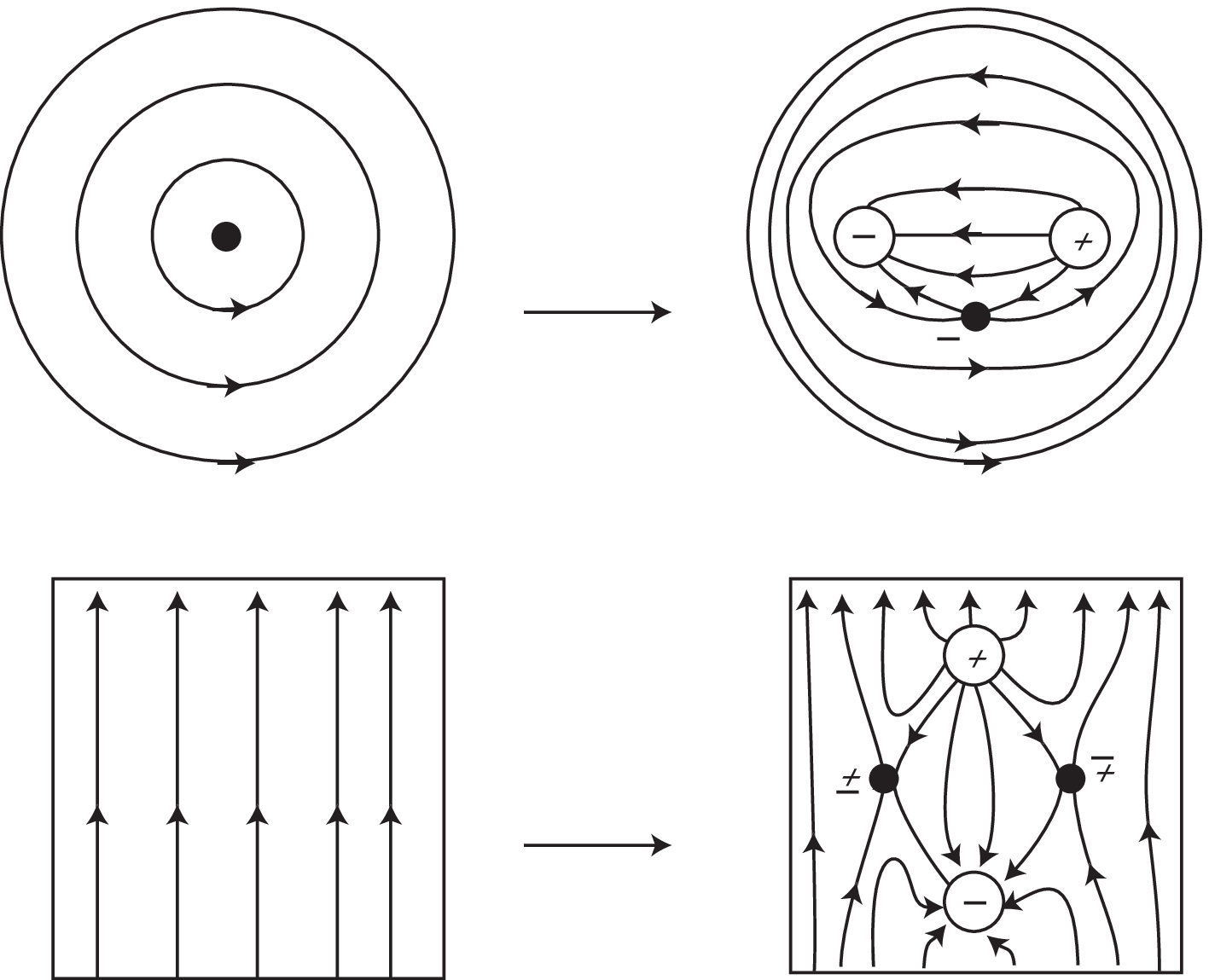}
\caption{(Top) Foliation change by a finger move near a local maximum. (Bottom) Non-singular region involved in a finger move.}
\label{foliation finger move}
\end{center}
\end{figure}
\end{proof}

\begin{proof}[Proof of Proposition~\ref{prop:no-c-circle}] 

Let $\F(F)$ be an open book foliation containing c-circles. 
For a c-circle $c$ there exists a maximal annulus $c \subset A_c \subset F$ whose interior is foliated only by c-circles and whose boundary components are singular leaves. 
Let us call $A_c$ a {\em c-circle annulus}. 
The number of c-circle annuli in $\F(F)$ is finite since the number of singularities of $\F(F)$ is finite. 

In the following, applying finger moves introduced in the proof of Theorem~\ref{prop:of} we will eliminate all the c-circle annuli. Recall that a finger move does not introduce new c-circles.

Let $A \subset F$ be a c-circle annulus whose interior consists of a smooth family of c-circles $\{c_t \subset S_t\}$ and let $c_{t_i} \subset S_{t_i} \cap \partial A$ ($i=0, 1$) denote the limit circles of the family. 
There is no restriction on the way that $A$ may wind around the binding components. 
Each limit circle $c_{t_i}$ has one (or two) hyperbolic point(s). (In the latter case the two points must be identical due to the condition {\bf($\mF$ iii)} and the limit circle is immersed like the singular leaf in a cc-pants as in Figure~\ref{region}.)

Since the open book foliation $\F(F)$ contains only finitely many hyperbolic points, there exists a finite family of disjoint smooth arcs and points 
$$\{\mbox{ arc } \alpha_i \subset A,\  \mbox{ point } p_i \in B \ |\  i=1,\cdots,k\},$$ 
where $B$ is the set of binding components, 
such that 
\begin{itemize}
\item
every c-circle of $A$ intersects at least one of the arcs $\alpha_i$,
\item
all the intersections of $\alpha_i$ and c-circles are transverse,
\item 
for each $\alpha_i$ there exists a smooth family of arcs $\lambda^i_t \subset S_t$ from the point $\alpha_i \cap S_t$ to $p_i$ that avoids hyperbolic points of $\F(F)$ and is never tangent to leaves of $\F(F)$. See Figure~\ref{fig:c-circle annulus}. (It is  convenient to imagine a triangle $\Delta_i = \{\lambda^i_t\}$ with the bottom edge $\alpha_i$ and the top vertex $p_i$.)
\end{itemize}
\begin{figure}[htbp]
\begin{center}
\SetLabels
(.85*.5) $A$\\
(-.07*.47) $p_i$\\
(.84*.25) $\alpha_1$\\
(.64*.5) $\alpha_i$\\
(.65*.9) $\alpha_k$\\
(.25*.45) $\Delta_i$\\
(.5*.02) $c_{t_0}$\\
(.5*1) $c_{t_1}$\\
\endSetLabels
\strut\AffixLabels{\includegraphics*[width=30mm]{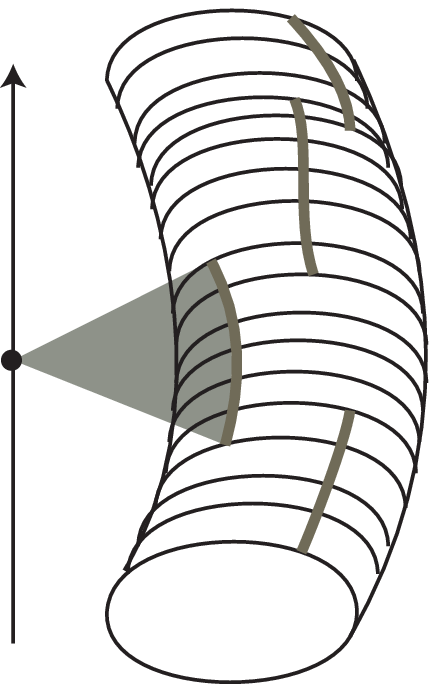}}
\caption{Arc $\alpha_i$, point $p_i$ and triangle $\Delta_i$.}\label{fig:c-circle annulus} 
\end{center}
\end{figure}

We apply a finger move (see the proof of Theorem~\ref{prop:of}) along the triangle $\Delta_i$.
The open book foliation locally changes as in the bottom passage of Figure~\ref{foliation finger move} in a neighborhood of $\alpha_i$. 
Then all the c-circles through $\alpha_i$ disappear. 
Repeat finger moves along all $\Delta_1, \cdots, \Delta_k$. 
As a consequence all the c-circles of $A$ disappear. 
Note that the finger moves may introduce new singularities even away from $A$ if some $\Delta_i$ intersect other parts of the surface $F$. 
We apply this procedure to every c-circle annulus. 
\end{proof}

\subsubsection{Regions}

\begin{definition}\cite[p.30]{BMi} 
Recall the three types of regular leaves: Type $a, b$ and $c$ (Proposition~\ref{classification of regular leaves}). 
The hyperbolic points in $\F(F)$ are classified into six types, according to types of nearby regular leaves: {\em Type aa, ab, bb, ac, bc,} and {\em cc} as depicted in Figure ~\ref{region}.
We call such model regions {\em aa-tile, ab-tile, bb-tile, ac-annulus, bc-annulus, cc-pants}, respectively. 
(Note that $ac$-annuli do {\em not} exist in braid foliation theorey \cite[p.279]{bf}.)
\begin{figure}[htbp]
\begin{center}
\SetLabels
(0*.92) $\partial F$\\
(.3*.58) $\partial F$\\
(.6*.58) $\partial F$\\
(.3*.06) $\partial F$\\
(0.15*0.55)  aa-tile\\
(0.5*0.55)    ab-tile\\
(0.84*0.55)  bb-tile\\
(0.15*0.04)  ac-annulus\\
(0.5*0.04)    bc-annulus\\
(0.84*0.04)  cc-pants\\
\endSetLabels
\strut\AffixLabels{\includegraphics*[scale=0.5, width=90mm]{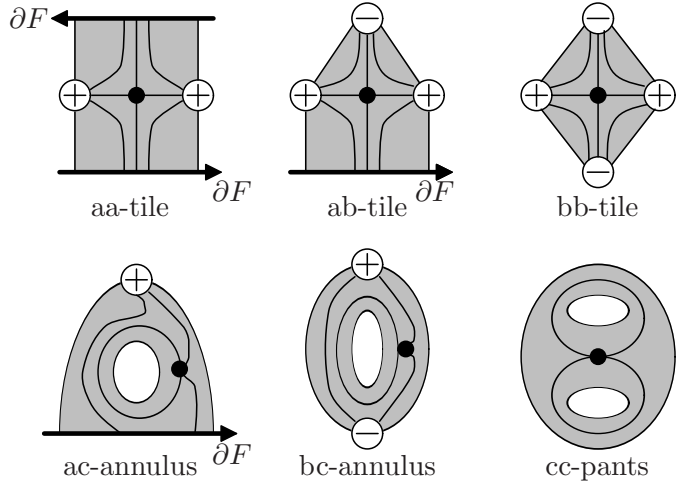}}
\caption{The six types of regions.}\label{region}
\end{center}
\end{figure}

For each region, the sign of the hyperbolic point can be either $+1$ or $-1$, but the signs of the elliptic points are determined as depicted in Figure~\ref{region} due to  Claim~\ref{sign-observation}. 
For $ac$- and $bc$-annuli, the hyperbolic points can be on the left part of the annuli.  
The interior of a region is embedded in $F$ as a disc, an annulus, or a pair of pants.
\end{definition}

\begin{definition}[Degenerate regions]
If a region $R$ is of type $aa$, $ac$, $bc$ or $cc$ some parts of $\partial R$ are possibly identified in $F$. In such case we say that $R$ is {\em degenerate}.  
For example, in Figure~\ref{fig:degenerate_region}-(1) two boundary a-arcs of an aa-tile are identified, and in (2) the two boundary b-arcs of a bc-annulus are identified (we have already seen this in Figure~\ref{foliation finger move}).

On the other hand, a region like in Figure~\ref{fig:degenerate_region}-(3), where two ends of the singular leaf lie on the same positive elliptic point, does not exist. 
This is because around an elliptic point all the leaves (both regular and singular) sit  on distinct pages. 
\end{definition}
\begin{figure}[htbp]
\begin{center}
\SetLabels
(0*0.9)  (1)\\
(0.34*0.9)  (2)\\
(0.7*0.9)  (3)\\
  \endSetLabels
\strut\AffixLabels{\includegraphics*[scale=0.5, width=90mm]{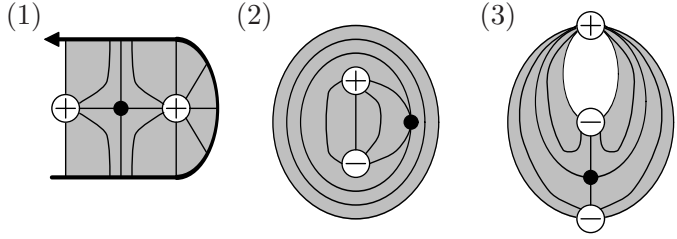}}
 \caption{(1) A degenerate aa-tile. (2) A degenerate bc-annulus. (3) Non-existing region.}
\label{fig:degenerate_region}
\end{center}
\end{figure}
We study degenerate regions in \cite{ik2}.

The next proposition shows one of the useful combinatorial features of open book foliations. It is originally a theorem in braid foliation theory. 

\begin{proposition}[Region decomposition]  
\label{prop:region}
\cite[Theorem 1.2]{bf} 
If $\F(F)$ contains a hyperbolic point, the surface $F$ is decomposed into a union of model regions whose interiors are disjoint. 
\end{proposition}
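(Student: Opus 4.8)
The plan is to build the decomposition cell-by-cell, guided by the hyperbolic points, exactly as in the braid foliation case but with the enlarged list of leaf types permitted by our axioms. First I would fix a regular page $S_{t_0}$ lying between two consecutive singular pages, and look at the slices $F\cap S_t$ as $t$ runs over one such ``elementary'' interval $[t_0,t_1]$ containing a single singular page $S_{\tau}$ with its unique hyperbolic point $h$. Away from $h$ the foliation is a product, so the change in the combinatorial picture of $F\cap S_t$ across $\tau$ is entirely local: two regular leaves (the stable separatrices of $h$) merge along the describing arc $\gamma$ of Definition~\ref{defn:describing_arc} and reopen as the two unstable separatrices. The four separatrices together with their limiting behaviour at elliptic points or at $\partial F$ cut out a neighborhood $R_h$ of $h$ in $F$. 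Using Proposition~\ref{classification of regular leaves}, each of these four boundary pieces is an $a$-arc, a $b$-arc, or an arc of a $c$-circle, so $R_h$ is one of the six model pieces of Figure~\ref{region} (aa-, ab-, bb-tile, ac-, bc-annulus, cc-pants), possibly degenerate in the sense of Definition~[Degenerate regions] when some of those boundary pieces get identified in $F$; the sign constraints on the elliptic vertices are forced by Claim~\ref{sign-observation}.

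Next I would argue that these regions $\{R_h : h \text{ hyperbolic}\}$ cover $F$ with pairwise disjoint interiors. Disjointness of interiors is clear because each $R_h$ is an embedded (or immersed-along-the-boundary-only) neighborhood of its hyperbolic point and the hyperbolic points are distinct and, by ($\mF$ iii), lie on distinct singular pages, so no interior point of one $R_h$ is a singular point of the foliation belonging to another. For covering: the complement $F \setminus \bigcup_h \Int(R_h)$ is a compact subsurface foliated entirely by regular leaves, i.e.\ a disjoint union of product pieces $\ell \times I$ where $\ell$ is a regular leaf; but by construction each such product piece has been absorbed into the closure of the adjacent regions — concretely, every regular leaf $\ell$ of $\F(F)$ lies in the closure of exactly the region(s) $R_h$ whose separatrices bound the maximal product band through $\ell$, so there is nothing left over. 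Here one uses that $\F(F)$ has at least one hyperbolic point, so no component of $F$ is a union of $c$-circles or of $b$-arcs only with no singularity (such a component would be a torus or an annulus with no hyperbolic point, excluded by hypothesis via Proposition~\ref{poincare-hopf}).

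Since all of this is a routine adaptation of \cite[Theorem 1.2]{bf}, the only genuinely new bookkeeping — and the step I expect to be the main nuisance rather than a deep obstacle — is handling the leaf types that do not occur in the braid setting, namely $c$-circles that are inessential and $ac$-annuli (noted as absent from braid foliations in \cite[p.279]{bf}), together with the degenerate identifications of Definition~[Degenerate regions]. For these I would check directly that the local model around $h$ is unaffected by whether the bounding $c$-circle is essential, and that a degenerate region is still the continuous image of one of the six models under an identification of part of its boundary, so it contributes correctly to the cover. With that verification in place the decomposition follows, and as a sanity check one recovers Proposition~\ref{poincare-hopf} by summing the Euler characteristics of the model pieces with their shared boundary leaves and elliptic vertices.
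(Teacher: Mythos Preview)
The paper gives no proof of this proposition: it simply refers the reader to \cite[Theorem~1.2]{bf}. Your outline follows that same Birman--Finkelstein strategy, and the ingredients you list (identifying the six local models via Proposition~\ref{classification of regular leaves}, the sign constraints from Claim~\ref{sign-observation}, the bookkeeping for degenerate regions and for the new $ac$-annulus type) are exactly the ones needed.

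There is one place where your construction and your covering argument do not quite match. You build $R_h$ from the slices over a single elementary interval $[t_0,t_1]$ containing only the singular page $S_\tau$, but a regular leaf $\ell\subset F\cap S_t$ that is \emph{not} involved in the saddle at $\tau$ passes unchanged through $[t_0,t_1]$ and so lies in no $R_h$ over that interval. In the covering step you then correctly invoke the maximal product band through $\ell$, bounded at each end by separatrices of some hyperbolic points $h$ and $h'$ --- but that band may stretch across several elementary intervals, so the regions $R_h$ as you have defined them do not yet cover it. The fix (and this is what \cite{bf} effectively does) is to define $R_h$ intrinsically as the singular leaf $L_h$ together with the four adjacent half-bands: for each band $B$ of regular leaves abutting $L_h$, choose once and for all a regular leaf $\ell_B\subset B$ and include in $R_h$ the portion of $B$ between $L_h$ and $\ell_B$. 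With that definition both the covering and the disjointness of interiors are immediate, and the remainder of your argument goes through as written.
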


We omit a proof and refer the readers to the proof of \cite[Theorem 1.2]{bf}.  

The decomposition is called a {\em region decomposition} of $F$. 
It describes how $F$ is embedded in $M_{(S, \phi)}$.
If $\F(F)$ has no $c$-circles then the region decomposition gives a cellular decomposition of $F$.

\subsubsection{The graph $G_{--}$}

\begin{definition}
The two flow lines, induced by the orientation vector field $X_{ob}$ on $F$ (Definition~\ref{def ori of leaf}), approaching to (resp. departing from) the hyperbolic point in an $aa$-, $ab$-, or $bb$-tile is called {\em stable} (resp. {\em unstable}) {\em separatrices}.
\end{definition}

\begin{definition}\label{def:negativity-graph}
\cite[p.471]{bm1}
The {\em graph $G_{--}$} is a graph embedded in $F$. 
The edges of $G_{--}$ are the unstable separatrices for negative hyperbolic points in $aa$-, $ab$- and $bb$-tiles. 
See Figure \ref{fig:neggraph}. 
We regard the negative hyperbolic points as part of the edges. 
The vertices of $G_{--}$ are the negative elliptic points in $ab$- and $bb$-tiles and the end points of the edges of $G_{--}$ that lie on $\partial F$, called the {\em fake} vertices.
\begin{figure}[htbp]
 \begin{center}
\SetLabels
(0.03*0.95) aa-tile\\
(0.3*0.95) ab-tile\\
(0.58*0.95) bb-tile\\
(0.90*0.1) \Large : $G_{--}$\\
(0.9*0.29) : Fake vertex\\
  \endSetLabels
\strut\AffixLabels{\includegraphics*[scale=0.5, width=120mm]{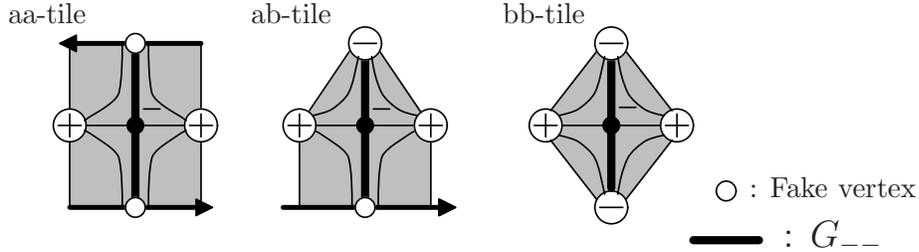}}
 \caption{The graph $G_{--}$.}\label{fig:neggraph}
  \end{center}
\end{figure}

In the same way we can define $G_{++}$ the graph consists of positive elliptic points and stable separatrices of positive hyperbolic points.  
\end{definition}

\begin{remark}
The origin of the above definition is the graphs $G_{\pm \pm}, G_{\pm \mp}$ \cite[p.314]{bf} \cite[p.471]{bm1} in braid foliation theory. 
In convex surface theory our $G_{--}$ corresponds to a sub-graph of the Giroux graph \cite[p.646]{giroux2}. 
\end{remark}

We will use the graphs $G_{--}$ and $G_{++}$ to define a transverse overtwisted disc and to give an alternative proof to the Bennequin-Eliashberg inequality in \S\ref{sec:OT-disc}. 

\subsubsection{Movie presentation}\label{sec:movie_presentation}

{\qquad}\\
A useful tool for expressing how the surface, $F$, is embedded in $M_{(S, \phi)}$, {\em movie presentations}, can be borrowed from braid foliation theory, see \cite[Fig 8]{BMi}. 
Using a movie presentation allows us to grasp the whole picture of $\F(F)$.

Let $\{S_{t_{i}}\}_{i=1,\ldots,k}$ be the set of singular pages of $\F(F)$, where $0<t_{1}<t_{2}< \cdots < t_{k}<1$. 
Consider the family $\{(S_t, F\cap S_t) \ | \ t \in [0, 1]\}$ of slices of $F$ by the pages $S_t$.  
For $s, t \in (t_i, t_{i+1})$ the slices $(S_s, F\cap S_s)$ and $(S_t, F\cap S_t)$ are isotopic, and the isotopy type of $(S_{t},F\cap S_{t})$ changes only when $t=t_i$. 
The describing arcs (Definition~\ref{defn:describing_arc}) encode all the information of the configuration changes. 

Choose $s_0=0,s_k=1$ and $s_i \in (t_i, t_{i+1})$. 
Consider the slices $\{(S_{s_i}, F \cap S_{s_i}) \ |\ i=0,\ldots,k\}$. 
These are the slices on which we may place describing arcs. 
The describing arc for the singularity on $S_{t_i}$ is found on $S_{s_{i-1}}$. 
The above observation shows that those are the the slices that determine the embedding of $F$ and the open book foliation $\F(F)$ up to isotopy. 
The slice $(S_{1}, F \cap S_{1})$ is identified with the slice $(S_{0}, F\cap S_{0})$ under the monodromy $\phi$. 
We call this family of slices with describing arcs a {\em movie presentation} of $\F(F)$. 

We will often use part of a movie presentation to express a local picture of a surface. 
Also, for the reader's convenience, some movie presentations may contain singular slices $(S_{t_i}, F\cap S_{t_i})$ like in Figure~\ref{fig:of_exam_2}.

\subsubsection{Examples of open book foliations} 

\begin{example}\label{the simplest example}
First we consider the simplest open book $(D^{2},id)$ which supports the standard tight contact structure on $S^3$. 
This is the case that Birman and Menasco studied in their braid foliation theory. 
Consider a $2$-sphere $F$ embedded as shown in the left sketch of Figure~\ref{fig:of_exam_1}. 
\begin{figure}[htbp]
 \begin{center}
\SetLabels
  \endSetLabels
\strut\AffixLabels{\includegraphics*[scale=0.5, width=80mm]{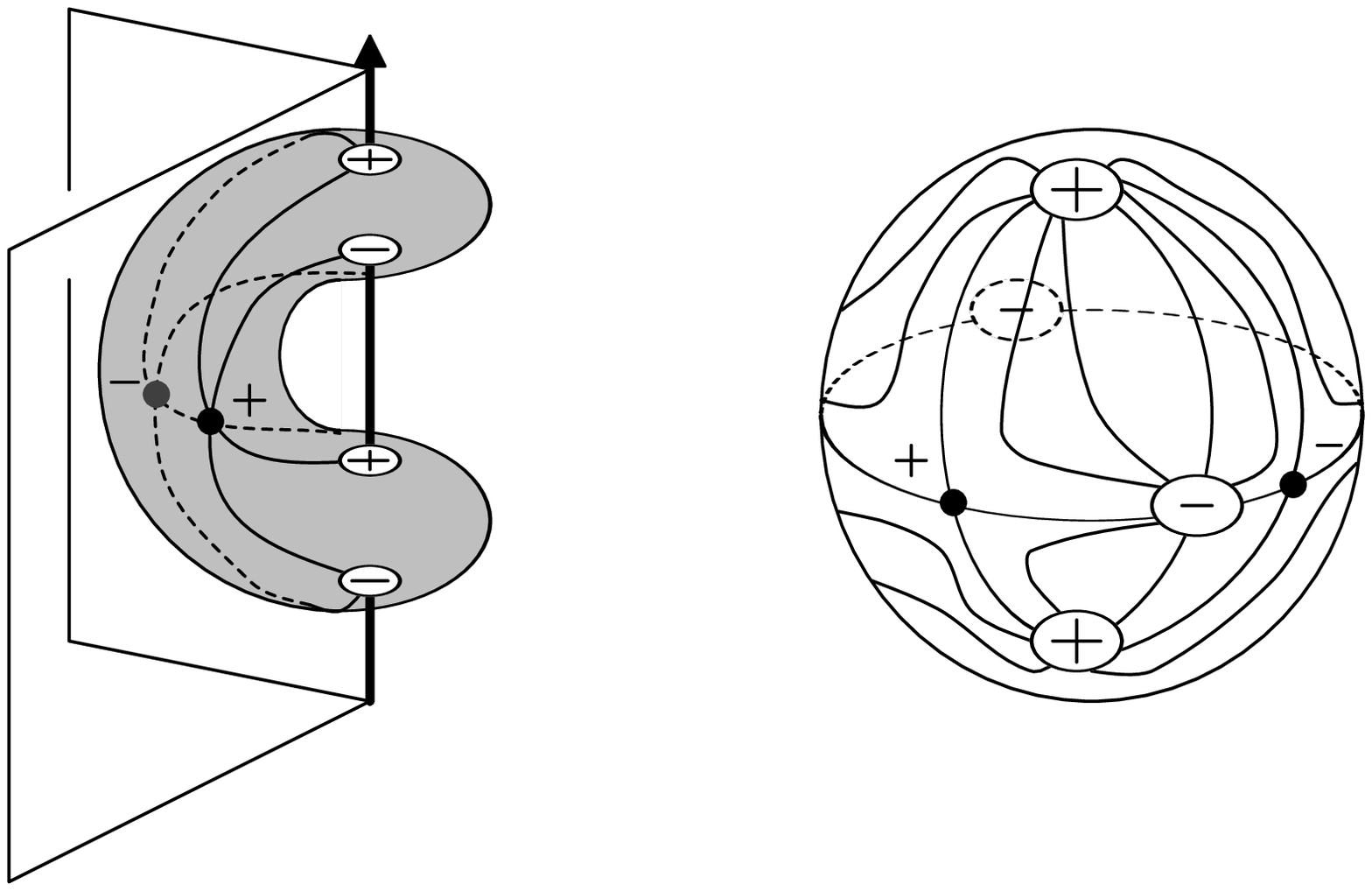}}
 \caption{Example~\ref{the simplest example}.}\label{fig:of_exam_1}
  \end{center}
\end{figure}

Since $F$ intersects the binding in four points, the open book foliation $\F(F)$ has four elliptic points, two positive and two negative. It also has two hyperbolic points of opposite signs where $F$ is tangent to pages of the open book (we may assume that the hyperbolic points lie on pages $S_{1/4}$ and $S_{3/4}$). 
The right sketch of Figure~\ref{fig:of_exam_1} depicts the whole picture of $\F(F)$ and Figure~\ref{fig:movie} depicts a movie presentation of $\F(F)$, where the dashed arrows indicate positive normals to $F$. 
Note that the open book foliation $\F(F)$ contains inessential $b$-arcs so, strictly speaking, this foliation is not treated in braid foliation theory.  
\begin{figure}[htbp]
 \begin{center}
 \SetLabels
  (0.12*0.3) $S_{0}$\\
  (0.44*0.3) $S_{\frac{1}{2}}$\\
  (0.74*0.3) $S_{1}$\\
  (0.15*0.7) $-$\\
  (0.51*0.62) $+$\\
\endSetLabels
\strut\AffixLabels{\includegraphics*[scale=0.5, width=90mm]{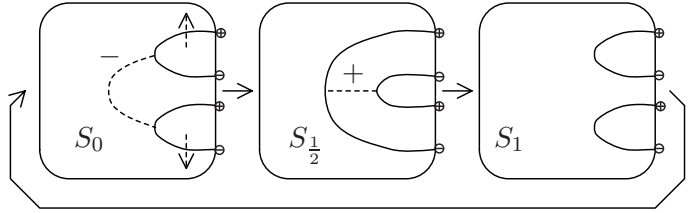}}
 \caption{A movie presentation of Example~\ref{the simplest example}.}
\label{fig:movie}
  \end{center}
\end{figure}
\end{example}

\begin{example}\label{informative example}
Next we study a more informative example.
Consider the open book $(S, \phi):= (A,T_{A}^{-1})$ where $A$ denotes an annulus and $T_{A}$ the right-handed Dehn twist along a core circle of $A$.
The ambient manifold is again $S^{3}$. 
However in this case the binding is a negative Hopf link and $(A,T_{A}^{-1})$ supports an overtwisted contact structure.

In order to visualize an overtwisted disc, $D$, we cut the complement of the binding $S^3 \setminus B$ along the page $S_{0}$. The resulting manifold $\simeq S \times [0,1]$ and each page $S_{t}$ is naturally identified with $S \times \{t\}$. 
The disc $D \subset M_{(S, \phi)}$ is also cut out along $D \cap S_0$ and becomes a properly embedded surface, $D'$,  in $S \times [0,1]$ such that $D'\cap (S\times \{0\})= \phi(D' \cap (S\times \{1\}))$ and $D' \cap (\partial S \times [0,1]) = (D \cap \partial S_0)\times [0,1]$. 
The left sketch in Figure~\ref{fig:of_exam_2} shows how $D'$ is embedded in $S\times [0, 1]$. 
\begin{figure}[htbp]
\begin{center}
\SetLabels
\endSetLabels
\strut\AffixLabels{\includegraphics*[scale=0.5, width=90mm]{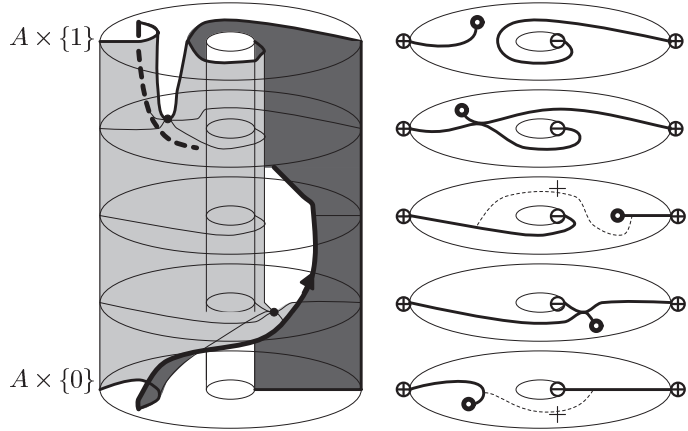}}
\caption{Example~\ref{informative example}. An overtwisted disc in an annulus open book $(A,T_{A}^{-1})$.}
\label{fig:of_exam_2}
\end{center}
\end{figure}
The sketch on the right depicts a movie presentation of $\F(D)$.
(For convenience, as we note in \S~\ref{sec:movie_presentation}, redundant slices that contain hyperbolic points are added in the 2nd and 4th rows.) 
We see that the multi-curve in the top annulus is identified with the multi-curve in the bottom annulus under the monodromy  $T_{A}^{-1}$. 
The movie presentation also shows that the open book foliation $\F(D)$ contains two positive hyperbolic points, two positive elliptic points and one negative elliptic point.
See Figure~\ref{fig:of_exam_2fol} for the entire picture of $\F(D)$. 
\begin{figure}[htbp]
\begin{center}
\strut\AffixLabels{\includegraphics*[scale=0.5, width=50mm]{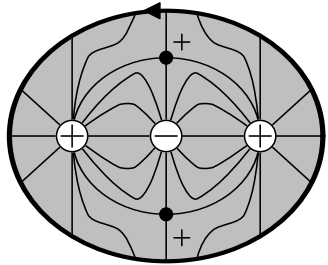}}
\caption{Example~\ref{informative example}. The open book foliation $\F(D)$.}
\label{fig:of_exam_2fol}
\end{center}  
\end{figure}
\end{example}

\subsection{Open book foliation vs. characteristic foliation}\label{sec:fromOB} 

{\quad}\\
Let $\cF(F)$ denote the characteristic foliation of a surface $F$ embedded in $(M, \xi)$. In this section, we compare the open book foliation $\F(F)$ and the characteristic foliation $\cF(F)$. 

\begin{theorem}[Structural stability]\label{identity theorem}
Assume that a surface $F$ in $M_{(S, \phi)}$ admits an open book foliation $\F(F)$. 
There exists a contact structure $\xi$ on $M_{(S, \phi)}$ supported by the open book $(S, \phi)$ such that 
$e_{\pm}(\F(F))=e_{\pm}(\cF(F))$ and $h_{\pm}(\F(F))=h_{\pm}(\cF(F))$.

Moreover, if $\F(F)$ contains no $c$-circles, then $\F(F)$ and $\cF(F)$ are topologically conjugate, namely there exists a homeomorphism of $F$ that takes $\F(F)$ to $\cF(F)$. 
In particular \cite[Lemma 2.1]{ev} implies that $F$ is a convex surface. 
\end{theorem}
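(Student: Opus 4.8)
The plan is to construct the contact structure $\xi$ explicitly from the open book foliation data and then to compare the two singular foliations leaf by leaf. First I would use the Thurston--Winkelnkemper/Giroux construction to build a contact form $\alpha$ adapted to $(S,\phi)$; the freedom in this construction is exactly the freedom to prescribe the contact planes near each page, and I would exploit it so that near every point where $F$ meets a page transversely the plane $\xi$ is close to $T_pS_t$, while near the binding $\xi$ spirals in the standard way. The key point is that the elliptic points of $\F(F)$ are precisely the points $B\cap F$, and for a binding-adapted contact structure the characteristic foliation is also radial there with the same sign (the binding is transverse to $\xi$, positively or negatively according to orientation), so elliptic points and their signs automatically match. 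For the hyperbolic points I would arrange $\alpha$ so that the page-tangencies of $F$ become the only points where $\xi\cap TF$ degenerates, and that they are nondegenerate of saddle type with the matching sign; this is a local normal-form argument near finitely many points, using that a generic perturbation of the plane field supported near the pages does not create or destroy zeros of $\xi\cap TF$ away from the prescribed ones. This gives the count equalities $e_\pm(\F(F))=e_\pm(\cF(F))$ and $h_\pm(\F(F))=h_\pm(\cF(F))$.

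For the second, stronger statement, under the hypothesis that $\F(F)$ has no $c$-circles I would upgrade the above to a genuine topological conjugacy. Here the absence of $c$-circles is what makes $\F(F)$ Morse--Smale: by Proposition~\ref{prop:region} the region decomposition is a cellular decomposition, there are no closed leaves, and every regular leaf runs from a singularity (or $\partial F$) to a singularity (or $\partial F$). I would check that with the contact structure constructed above $\cF(F)$ is likewise Morse--Smale with no closed orbits: the matching of singularities and their indices/signs is already in hand, and one must verify there are no separatrix connections between saddles and no closed leaves. Then I would invoke the classical structural stability theory for Morse--Smale flows on surfaces (Peixoto): two Morse--Smale foliations on a compact surface with the same singularity data and the same combinatorial ``phase portrait'' — same incidence of separatrices among singular points and boundary — are topologically conjugate. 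So the remaining task is to match the combinatorial skeleton, i.e.\ to see that the separatrix graph of $\cF(F)$ can be isotoped onto the graph built from the describing arcs / region decomposition of $\F(F)$; this follows because both are determined by how the saddles connect the radial elliptic points through the cell structure, which is dictated by the same embedding of $F$ in the open book.

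The main obstacle I expect is the second paragraph's matching of \emph{global} separatrix combinatorics, not the local normal forms. Locally, near each elliptic and hyperbolic point, getting $\xi$ into the desired model is routine. The delicate part is ensuring that as one flows along unstable separatrices of a negative hyperbolic point, or stable separatrices of a positive one, the characteristic-foliation trajectory lands on the same elliptic point (or boundary arc) as the corresponding open-book-foliation separatrix, with no accidental saddle connections in between — in other words that the ``gluing'' of the local models along the regions of the region decomposition is faithful. I would handle this by working region by region: on each tile/annulus/pants of the region decomposition the open book foliation has a standard model, and I would choose $\alpha$ region-by-region (interpolating on overlaps) so that $\cF(F)$ restricted to each region is conjugate to that model rel boundary; since the regions cover $F$ and meet along regular leaves transverse to both foliations, these local conjugacies patch to a global homeomorphism of $F$ carrying $\F(F)$ to $\cF(F)$. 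Finally, convexity of $F$ follows formally: a surface whose characteristic foliation is Morse--Smale is convex by \cite[Lemma 2.1]{ev}, which applies once we know $\cF(F)$ has no closed leaves and only nondegenerate singularities, exactly the situation we have arranged.
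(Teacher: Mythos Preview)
Your overall strategy---build a Thurston--Winkelnkemper contact form, match singularities locally, then patch region by region when there are no $c$-circles---is the same as the paper's. Two points deserve correction or sharpening.

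First, there is a genuine gap at the elliptic points. You assert that because the binding is transverse to $\xi$, the characteristic foliation is automatically radial at each $p\in B\cap F$ with the correct sign. This is not so: a singularity of $\cF(F)$ at $p$ requires $T_pF=\pm\xi_p$, and while both planes are transverse to the binding, they need not coincide. The paper handles this by perturbing $F$, not $\alpha$: one pushes a tiny disc $D_0\subset F$ around $p$ along the binding so that, by the rotational symmetry of the local model $\alpha=2\,d\theta+r^2\,dt$, the new $T_pF$ equals $\epsilon\,\xi_p$. This isotopy does not change $\F(F)$ but forces the desired elliptic singularity of $\cF(F)$ at $p$.

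Second, the ``freedom'' you invoke is not a local choice of $\beta_t$ near $F$ but simply the large constant $C$ in $\alpha=\beta_t+C\,dt$. Taking $C\gg1$ makes $\xi$ uniformly close to the page tangent planes away from the binding; in local saddle coordinates $z=x^2-y^2$ for $F$ near a hyperbolic point this forces the normals $\vec n_\xi$ and $\vec n_F$ to agree at exactly one nearby point, producing a hyperbolic singularity of $\cF$ of the same sign, and prevents any singularity of $\cF$ at non-singular points of $\F$. No region-by-region adjustment of $\alpha$ is needed for the first assertion.

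For the conjugacy statement your Peixoto-style outline is fine, but the paper's execution is more direct and avoids checking the Morse--Smale hypotheses for $\cF$ separately: in each $aa$-, $ab$-, or $bb$-tile one takes a small disc neighborhood $\mathcal D$ of the stable separatrices; the leaves of $\F$ point outward along $\partial\mathcal D$, so $\partial\mathcal D$ is a positive transverse unknot, hence the leaves of $\cF$ also point outward along $\partial\mathcal D$. Together with the local conjugacy at the hyperbolic point this gives conjugacy on $\mathcal D$ rel $\partial\mathcal D$, and the same argument on the unstable side lets the pieces glue. This replaces the abstract ``same Peixoto graph'' step by a one-line observation about braids.
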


\begin{proof}

Recall the Thurston-Winkelenkemper construction \cite{TW}, \cite[p.151-153]{geiges} of a contact structure compatible with the open book $(S, \phi)$: 

Away from the binding Thurston-Winkelenkemper's contact $1$-form is written as $\alpha = \beta_t + C dt$ where $t \in [0,1]$ (page parameter), $C\gg 1$ is a sufficiently large constant number and $\{\beta_t\}$ is a smooth family of $1$-forms on the page $S_t$ such that $d\beta_t$ is an area form of $S_t$ of total area $2\pi$ and $\beta_1=\phi^* \beta_0$. 
Such family $\{\beta_t\}$ is not unique, so we choose any to start with.

Near a binding component there exists cylindrical coordinates $(\theta, r, t)$, where $\theta$ represents the positive direction of the binding and $t \in [0, 1]$ is the same $t$ as above, such that 
\begin{equation}\label{eq:near-binding}
\alpha= 2 d\theta + r^2 dt.
\end{equation}

Assume that $F$ admits an open book foliation $\F(F)$. 
In the following we use the $1$-form on $M_{(S, \phi)}$, $\alpha$,  chosen above and contact planes $\xi:=\ker \alpha$ to study neighborhoods of singular and non-singular points. 

{\bf (Elliptic points)} 
Suppose $p \in \Int(F)$ is an elliptic point of $\sgn(p)=:\epsilon \in\{\pm1\}$. 
This means that a binding component, $\gamma$, transversely intersects $F$  at $p$ with sign $\epsilon$, see Fig~\ref{push-down}-(1). 
Take a disc neighborhood $D \subset F$ of $p$ whose open book foliation $\F(D)$ contains no other singularities. 
By (\ref{eq:near-binding}) we know that along $\gamma$ the contact planes and $\gamma$ transversely intersect with sign $+1$. 
We push down (or up) a very small neighborhood $D_0 \subset D$ of $p$ along $\gamma$ without touching the rest of the surface, see Fig~\ref{push-down}-(2). 
\begin{figure}[htbp]
\begin{center}
\SetLabels  
(.05*.9) (1)\\
(.5*.9) (2)\\
(.17*.55) $p$\\
(.2*.9) $\gamma$\\
(.9*.5) $\cF(D_0)$\\
(.1*.2) $\F(D)$\\
(.67*.16) $p$\\
(.9*.2) $p$\\
\endSetLabels
\strut\AffixLabels{\includegraphics*[height=30mm]{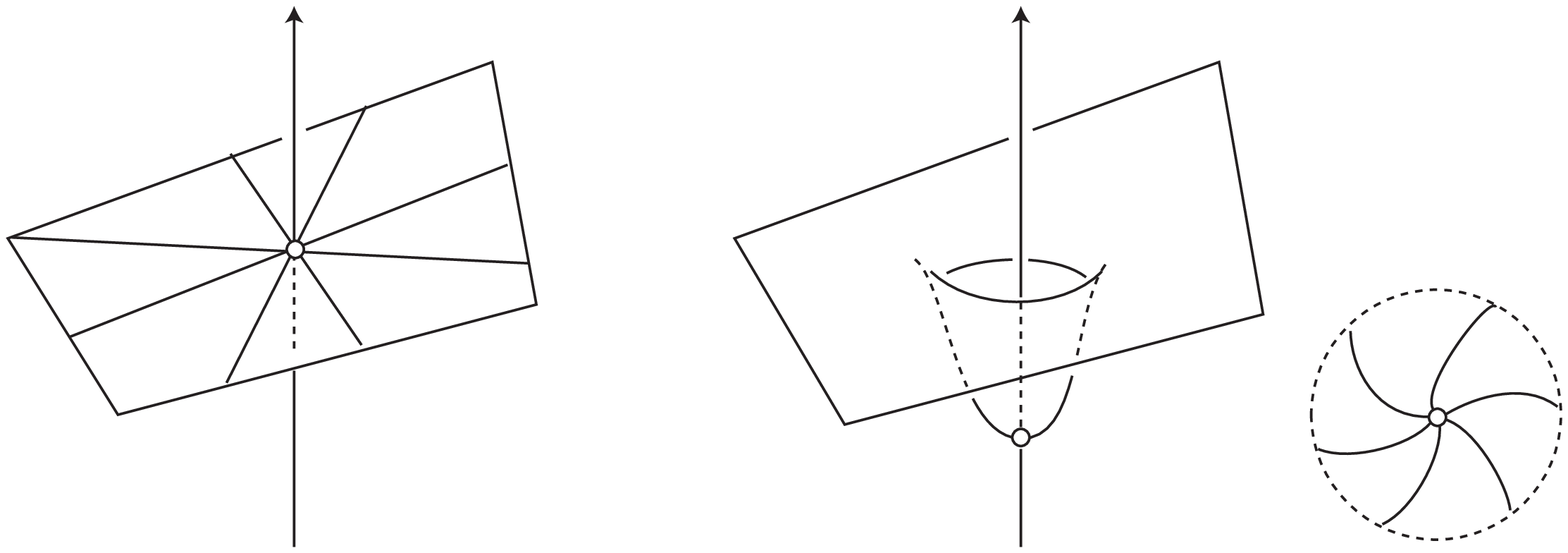}}
\caption{}\label{push-down}
\end{center}
\end{figure}
Since this operation preserves the open book foliation we may call the perturbed surface by the same name $F$. 
By the symmetry with respect to $\gamma$ of the pushed $D_0$ and $\alpha=2d\theta+ r^2 dt$, at the new $p=D_0 \cap \gamma$ the tangent plane and the contact plane satisfy $T_pF = \epsilon \cdot \xi_p$ hence the new $p$ is an elliptic point of the characteristic foliation $\cF(D_0)$ of sign $\epsilon$. 
(If we push {\em up} in Fig~\ref{push-down}-(2) instead of push {\em down} exactly the same argument holds.)

{\bf (Hyperbolic points)}
Let $p \in \F(F)$ be a hyperbolic point of $\sgn(p)=+1$. 
(A parallel argument holds for the negative case.) 
Take an open ball neighborhood $U \subset M_{(S, \phi)}$ of $p$ in which $p$ is the only singularity of the open book foliation $\F(F)$.  
Let $(x, y, z)$ be coordinates of $U$ such that 
\begin{itemize}
\item[(i)] $z$ is a coordinate for a sub-interval of $[0,1]$ such that $\partial_z= \partial_t$,   
\item[(ii)] $(x, y)$ are coordinates for the open disc $U\cap S_t$, 
\item[(iii)] $p=(0,0,0)$. 
\end{itemize}
We may assume that $F \cap U$ is a saddle surface and satisfies $z=x^2 - y^2.$
The normal vector $\vec{n}_F$ to the surface at $q=(x, y, z) \in F \cap U$ is $\vec{n}_F = (-2x, 2y, 1)$. 
Suppose that the contact plane $\xi_q = \ker(\alpha_q)$ at $q$ is spanned by 
\begin{equation}\label{spanning}
\xi_q= {\rm span} \langle \partial_x + f(q) \partial_z, \ \partial_y+g(q)\partial_z\rangle_\RR
\end{equation}
for some smooth functions $f, g: U\to \RR$. 
Let $\vec{n}_\xi :=(-f(q), -g(q), 1)$ then $\vec{n}_\xi$ is a positive normal to $\xi_q$. 
We have:  
$$
0=\alpha_q ( \partial_x + f(q) \partial_z) = \beta_q(\partial_x) + C f(q)
$$
$$
0=\alpha_q (\partial_y+g(q)\partial_z) = \beta_q(\partial_y) + C g(q)
$$ 
Since $C$ can be taken as large as we want we have: 
\begin{equation}\label{small}
|f(q)| = |\beta_q(\partial_x)|/C \ll 1,  \mbox{ and } 
|g(q)| = |\beta_q(\partial_y)|/C \ll 1
\end{equation}
Therefore, if we take $U$ small enough there exists a unique point $p_0 \in U\cap F$ at which $\vec{n}_F=\vec{n}_\xi$, and the foliations $\F(F\cap U)$ and $\cF(F \cap U)$ are topologically conjugate. 
In particular, $p_0$ is a hyperbolic point of the characteristic foliation and $\sgn(p_0)=\sgn(p)$.

{\bf (Non singular points)}
Let $p \in \Int(F)$ be a non-singular point in $\F(F)$. 
Take a small open $3$-ball neighborhood $U \subset M_{(S, \phi)}$ of $p$ so that the surface $F\cap U$ contains no singularity of $\F(F)$. 
Let $(x, y, z) \in \RR^3$ be coordinates of $U$ with the above conditions (i, ii, iii). 
We may suppose that $z=ky$ is satisfied on $F\cap U$ for some $k\neq 0$. 
So the leaves of $\F(U\cap F)$ are the integral curves of the vector field $\partial_x$.
Given a point $q=(x, y, z) \in U$ we may assume the above (\ref{spanning}) and (\ref{small}).
Hence the normal vector $\vec{n}_\xi=(-f(q), -g(q), 1)$ to $\xi_q$ and the normal vector $\vec{n}_F = (0, -k, 1)$ to $T_q F$ are not parallel to each other, i.e., $\xi_q \neq \pm T_q F$ and the point $q$ is not a singularity of $\cF(F)$.

The above arguments conclude the first assertion of the theorem:
$e_{\pm}(\F(F))=e_{\pm}(\cF(F))$ and $h_{\pm}(\F(F))=h_{\pm}(\cF(F))$. \\

To prove the second assertion, we assume that $\F(F)$ contains no $c$-circles. 
By Proposition~\ref{prop:region}, $F$ decomposes into type $aa$-, $ab$- and $bb$-tiles. 
For the stable separatrices $\mathcal S$ in each tile we take a small disc neighborhood $\mathcal D \subset F$ of $\mathcal S$.
The leaves of $\F(\mathcal D)$ are oriented outward along the boundary $\partial \mathcal D$. 
This implies that $\partial \mathcal D$ is a positive  braid w.r.t the open book $(S, \phi)$, or equivalently a positive transverse unknot in $(M_{(S, \phi)}, \xi)$, where $\xi$ is the contact structure chosen above.  
Therefore, the leaves of $\cF(\mathcal D)$ are also outward along $\partial \mathcal D$. 
Moreover, the above argument shows that $\cF(\mathcal D)$ and $\F(\mathcal D)$ are topologically conjugate relative to $\partial \mathcal D$. 

A similar argument holds for each unstable separatrices. 
Hence we conclude that $\cF(F)$ and $\F(F)$ are topologically conjugate. 
\end{proof}

\begin{remark}
\label{remark:rigid}
The above proof of Theorem~\ref{identity theorem} shows that the open book and characteristic foliations may coincide, especially when there are no $c$-circles in $\F$.  
Interesting contrast is found between open book foliations and  characteristic foliations (on convex surfaces). 
\begin{itemize}
\item
For a given closed surface $F$, we can always find a convex surface $F_{cv}$ that is isotopic and $C^{\infty}$-close to $F$. 
However, in general, there may not exist a surface admitting an open book foliation that is even $C^1$-close to $F$ (eg. when $F$ has local extrema relative to the pages and then we apply finger moves). 

\item 
The {\em dividing set} $\Gamma_F$ of a convex surface $F$ encodes essential information of local contact structure near $F$. 
It yields a decomposition $F\setminus\Gamma = F_{+} \sqcup F_{-}$ of $F$. 
If $\F(F)$ has no $c$-circles then the region $F_-$ is homotopy equivalent to our graph $G_{--}$.

\item 
In a characteristic foliation on a convex surface, any closed leaf is either repelling or attracting, and there are no type $ac$-, $bc$- and $cc$-hyperbolic points (Figure~\ref{region}) due to the Morse-Smale condition (cf. \cite[p.171]{geiges}).
On the other hand, an annular neighborhood of a $c$-circle in an open book foliation is foliated by parallel $c$-circles.

\item 
In the theory of convex surfaces, {\em Giroux elimination} \cite{giroux2}, \cite[Lemma 4.6.26]{geiges} allows us to remove a pair of elliptic and hyperbolic singularities of the same sign by an arbitrary $C^{0}$-small isotopy. 
Morally, one thinks that Giroux elimination corresponds to elimination of a certain arrangement of a pair of local extremum and a saddle point in an open book foliation $\F(F)$ by `flattening' the surface $F$. 
See Figure~\ref{fig:elimination}.

In a subsequent paper \cite{ik2} we discuss a number of operations in open book foliation theory that allow us to remove singularities. 
\end{itemize}
\begin{figure}[htbp]
\begin{center}
\SetLabels
(0*.65) elimination of\\
(0*.62) a local maximum\\
(0*.59) and a saddle\\
(.95*.29) Giroux elimination\\
(.85*.67) isotopy\\
(.96*.86) $+$\\
(.79*.87) $+$\\
(.96*.5) $+$\\
(.79*.51) $+$\\
\endSetLabels
\strut\AffixLabels{\includegraphics*[width=110mm]{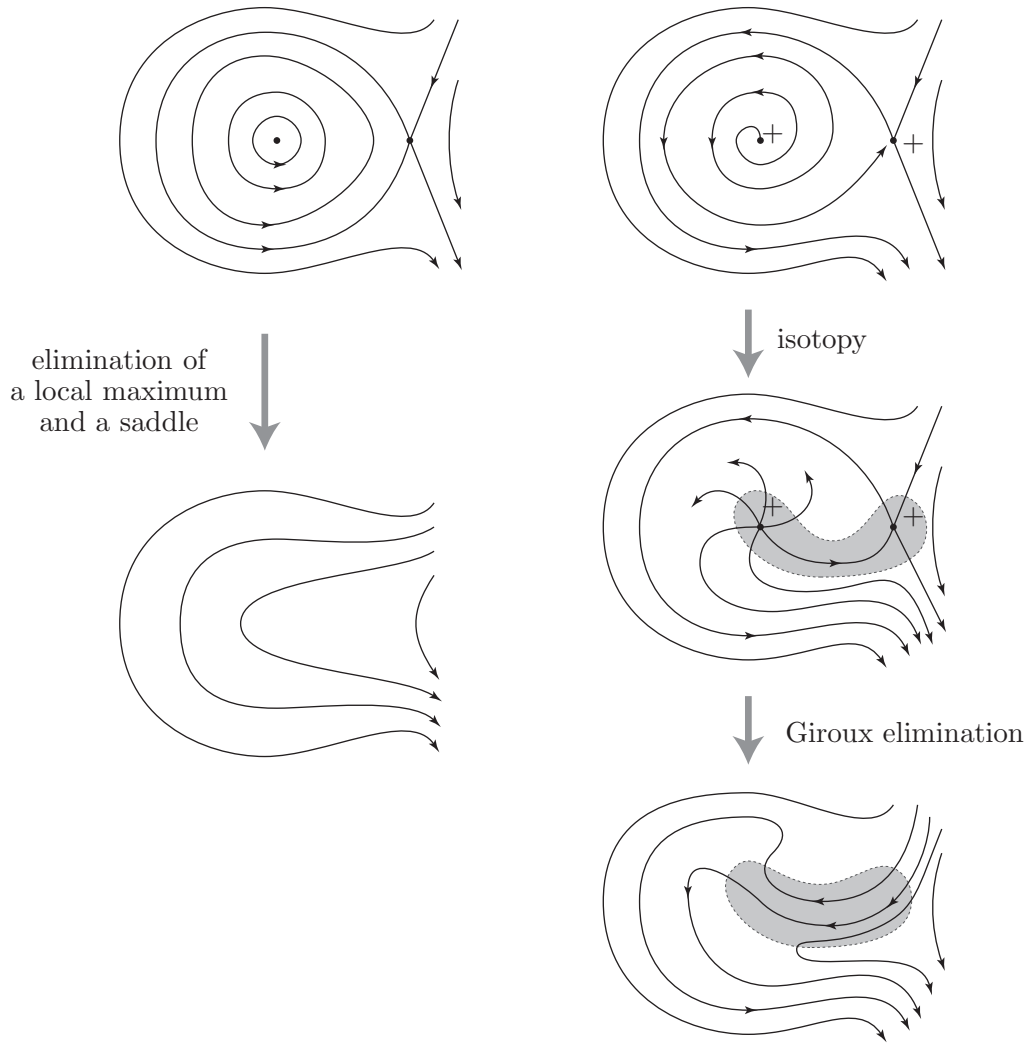}}
\caption{(Left) Elimination of a local extremum and a saddle point in open book foliation. (Right) Giroux elimination takes place in the shaded region of  characteristic foliation.}\label{fig:elimination}
\end{center}
\end{figure}
\end{remark}

\section{The self linking number}\label{sec:SL}

A {\em transverse knot} in a contact 3-manifold $(M, \xi)$ is an embedding of $S^1$ transverse to $\xi$. 
It is known that a transverse knot is a contact submanifold of a contact 3-manifold (see \cite[Rem 2.1.15]{geiges} for example). 
In this section we study an invariant of transverse knots, called the self linking number.

\begin{definition}
Let $L \subset (M, \xi)$ be a transverse link that bounds a surface $F$, i.e., $L$ is 0-homologous.
The rank 2 vector bundle $\xi|_F \to F$ over $F$ is trivializable. Let $s$ be a nowhere vanishing smooth section of the bundle. 
Push $L$ into the direction of $s$ and call the resulting link $L^{+s}$. 
The {\em self linking number} of $L$ relative to $[F] \in H_2(M, L; \Z)$, which we denote by $sl(L, [F])$, is the algebraic intersection number of $L^{+s}$ and $F$.   
\end{definition}

Using Mitsumatsu and Mori's theorem \cite{MM} or Pavelescu's  \cite{Pav, P2}, we can identify a transverse link in $(M, \xi)$ with a closed braid in any compatible open book $(S, \phi)$. 
The goal of this section is to prove Theorem~\ref{theorem:sl-formula}, a self-linking number formula for closed braids.

Our strategy is to construct a special Seifert surface $\Sigma$ for a given closed braid and count the singularities of its open book foliation $\F(\Sigma)$ then apply the following proposition:

\begin{proposition}\label{sl-formula-1}
Suppose that $F \subset M_{(S, \phi)}$ is a surface with the open book foliation $\F(F)$. In particular, $\partial F$ is a transverse link in $(M_{(S, \phi)}, \xi_{(S, \phi)})$.
Recall the integers $e_\pm = e_\pm(\F(F))$, $h_\pm=h_\pm(\F(F))$  defined in Definition~\ref{def of e}. 
We have
$$sl(\partial F, [F]) = -\langle e(\xi),[F] \rangle = -(e_{+}-e_{-})+(h_{+}-h_{-}).$$
\end{proposition}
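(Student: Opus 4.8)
The plan is to compute the self-linking number $sl(\partial F, [F])$ by exhibiting an explicit nowhere-vanishing section $s$ of $\xi|_F$ built from the open book foliation, and then counting the intersections $L^{+s} \cap F$ as contributions localized at the singularities of $\F(F)$. For this I would first invoke Theorem~\ref{identity theorem} (structural stability): after choosing the Thurston--Winkelenkemper contact structure $\xi = \xi_{(S,\phi)}$ adapted to $(S,\phi)$ and perturbing $F$ as in that proof, the open book foliation $\F(F)$ and the characteristic foliation $\cF(F)$ agree singularity-for-singularity, with $e_\pm(\F(F)) = e_\pm(\cF(F))$ and $h_\pm(\F(F)) = h_\pm(\cF(F))$. (If $\F(F)$ has $c$-circles I would first apply Proposition~\ref{prop:no-c-circle} to remove them, or else note that the $c$-circle annuli contribute nothing to the count below; the cleanest route is to reduce to the $c$-circle-free case since the quantity $-(e_+ - e_-) + (h_+ - h_-) = \chi(F) - 2(e_- + h_-)$ rearranged via Proposition~\ref{poincare-hopf} is manifestly isotopy-invariant on the left side.) Thus it suffices to prove the formula for the characteristic foliation, where it is the classical computation of $sl$ in terms of the Poincar\'e--Hopf indices of the characteristic foliation.

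Next I would recall the standard identification $sl(\partial F, [F]) = -\langle e(\xi), [F]\rangle$, the first claimed equality: the relative Euler number of the trivialization of $\xi|_F$ determined by the requirement that the section be tangent to $\partial F$ (the transverse push-off direction) equals, up to sign, the obstruction to extending that boundary section over $F$. This is the definition of the relative Euler class, and the sign convention $sl = -\langle e(\xi),[F]\rangle$ is the one compatible with Bennequin's formula $sl(b) = -n + \exp(b)$ in $(D^2, \id)$; I would pin down the sign by checking that single example, or by the elliptic-point model computation below.

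For the second equality I would compute $\langle e(\xi),[F]\rangle$ by a section-counting argument. Take the vector field $X_{ob}$ on $F$ directing the leaves of $\F(F)$ (Definition~\ref{def ori of leaf}); by structural stability this is, up to topological conjugacy, the characteristic foliation's directing field, whose singularities are the elliptic and hyperbolic points. The key point is that $\xi|_F$ carries a natural section away from the singularities --- namely the leaf direction $X_{ob}$ itself, which lies in $\xi$ at the conjugated surface --- and the obstruction to extending it over a disc neighborhood of a singular point is exactly the Poincar\'e--Hopf index of that singularity: $+1$ at an elliptic point (source or sink), $-1$ at a hyperbolic point (saddle). However, one must be careful: a source and a sink both have index $+1$ as zeros of a vector field, but they contribute with \emph{opposite} signs to the Euler class of $\xi|_F$ because the contact plane $\xi_p$ is co-oriented by $\vec n_F$ at positive elliptic points and by $-\vec n_F$ at negative ones (the sign of the elliptic point records whether the binding, hence $\xi$ near it via \eqref{eq:near-binding}, is positively or negatively transverse to $F$). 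The model computation near a binding component, using $\alpha = 2\,d\theta + r^2\, dt$ exactly as in the proof of Theorem~\ref{identity theorem}, shows a positive elliptic point contributes $+1$ and a negative one contributes $-1$ to $\langle e(\xi),[F]\rangle$; similarly the saddle-model computation there (equations \eqref{spanning}--\eqref{small}) shows a positive hyperbolic point contributes $-1$ and a negative one $+1$. Summing, $\langle e(\xi),[F]\rangle = (e_+ - e_-) - (h_+ - h_-)$, and hence $sl(\partial F,[F]) = -(e_+ - e_-) + (h_+ - h_-)$.

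The main obstacle is bookkeeping the signs correctly: reconciling the Poincar\'e--Hopf index (always $+1$ for a non-degenerate source or sink, $-1$ for a saddle) with the Euler-class contribution, which additionally sees the co-orientation of $\xi$ relative to $F$ and therefore flips sign between positive and negative singularities. I would handle this by doing the two local model computations in full --- one near a binding puncture with the form \eqref{eq:near-binding}, one near a hyperbolic point with $F = \{z = x^2 - y^2\}$ and $\xi$ as in \eqref{spanning} --- rather than arguing abstractly, since these are precisely the local pictures already set up in the proof of Theorem~\ref{identity theorem} and they make the signs unambiguous. A secondary, minor point is justifying that the boundary behavior is correct: along $\partial F$ the leaves of $\F(F)$ are transverse to $\partial F$ (axiom ($\mF$ ii)) and point outward in a way consistent with $\partial F$ being a positive braid, so the section $X_{ob}$ restricted to $\partial F$ is isotopic to the transverse push-off direction used in the definition of $sl$; this identifies the relative Euler number being computed with the one defining $sl(\partial F,[F])$.
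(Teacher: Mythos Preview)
Your proposal is correct and follows essentially the same route as the paper's own proof: invoke Theorem~\ref{identity theorem} to identify $e_\pm(\F(F))=e_\pm(\cF(F))$ and $h_\pm(\F(F))=h_\pm(\cF(F))$, and then appeal to the classical self-linking number formula for characteristic foliations (which the paper simply cites from Geiges). You have additionally unpacked that classical formula via the section $X_{ob}$ and local index computations, which is fine but more than the paper does; note also that the first assertion of Theorem~\ref{identity theorem} already gives the singularity-count matching without assuming the absence of $c$-circles, so the detour through Proposition~\ref{prop:no-c-circle} is unnecessary.
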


\begin{proof}
The self-linking number formula in characteristic foliation theory, see \cite[p.203]{geiges} for example, together with Theorem~\ref{identity theorem} yields the above formula.
\end{proof}

In order to state our main theorem (Theorem~\ref{theorem:sl-formula}) we first need to define a function $c: \MCG(S) \times H_{1}(S;\partial S) \rightarrow \Z$ in \S\ref{sec:def_of_c}. 
Later in \S\ref{sec:surface-one-boundary} we show that the function $c$ is  related to the first Johnson-Morita homomorphism, a well-studied homomorphism in mapping class group theory.

\subsection{Definition of function $c$.}\label{sec:def_of_c}

{\quad }\\
Let $S=S_{g, r}$ be an oriented genus $g$ surface with $r$ boundary components. 
We divide the surface $S$ by {\em walls} (dashed arcs in Figure~\ref{braid-generator}) into $g+r-1$ {\em chambers} so that $g$ of which are once-punctured tori and $r-1$ of which are annuli.

\begin{definition}[Normal form]\label{def of normal form}
A relative homology class $a \in H_{1}(S,\partial S)$ is represented by a set of properly embedded oriented simple closed curves and arcs in $S$. 
Among such multi-curve representatives, we take a special one, $N(a)$, which satisfies the following conditions: 
\begin{itemize}
\item 
$N(a)$ does not intersect the walls.
\item
Any subset of $N(a)$ has non-trivial homology in $H(S,\partial S)$,
i.e., the components of $N(a)$ in a torus (resp. an annulus) chamber is a torus knot or link (resp. parallel arcs joining $\gamma_0$ and $\gamma_i$ in Figure~\ref{braid-generator}) oriented in the same direction.
\end{itemize}
Clearly the multi-curve $N(a)$ is uniquely determined up to isotopy. We call $N(a)$ the {\em normal form} of the homology class $a \in H_1(S, \partial S)$.
\end{definition}

\begin{definition}[OB cobordism]\label{def of OB cobordism}
Let $A$ and $A'$ be oriented, properly embedded multi-curves in $S$ representing the same homology class $[A]=[A'] \in H_{1}(S,\partial S)$. 
An {\em open book foliation cobordism} ({\em OB cobordism}) between $A$ and $A'$, denoted by $A \stackrel{\Sigma}{\rightarrow} A'$, is a properly embedded oriented compact surface $\Sigma$ in $S \times [0,1]$ such that: 
\begin{itemize}
\item 
$\Sigma \cap S_0 = \partial\Sigma \cap S_0 = -A \times \{0\}$.
\item 
$\Sigma \cap S_1 =  \partial\Sigma \cap S_1 =A' \times \{1\}$.
\item 
$\partial A= A \cap \partial S = A' \cap \partial S= \partial A'$. 
\item 
$\partial \Sigma = (-A \times \{0\}) \cup (A' \times \{1\}) \cup (\partial A \times [0,1])$.
\item The fibration $\{S_t\}_{t \in [0,1]}$ induces a foliation $\mF_\Sigma$ on $\Sigma$ all of which singularities are of hyperbolic type.
\end{itemize}
\end{definition}

\begin{proposition}\label{prop existence of OB corbordism} 
There is an OB cobordism $A  \stackrel{\Sigma}{\rightarrow} N(a)$ for any multi-curve representative $A$ of $a \in H_1(S, \partial S)$. 
That is, if multi-curves $A$ and $A'$ represent the same homology class then there exists an OB cobordism $A\stackrel{\Sigma}{\rightarrow} A'$.
\end{proposition}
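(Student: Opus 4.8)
The plan is to construct the OB cobordism $\Sigma$ explicitly by a sequence of elementary moves, each realized by a surface in $S\times[0,1]$ carrying a single hyperbolic singularity, and then to check that these moves suffice to carry any multi-curve representative $A$ to the normal form $N(a)$. First I would reduce to two kinds of local modifications of a properly embedded oriented multi-curve in a page: (1) a \emph{saddle move}, which takes two disjoint arcs/circles $l_1,l_2$ and resolves them along a properly embedded \emph{describing arc} $\gamma$ joining them (compatibly with orientations) into the switched configuration --- this is literally the local model of a hyperbolic point described in Definition~\ref{defn:describing_arc}, so stacking the corresponding saddle surface on top of $A\times[0,\frac12]$ produces a cobordism with exactly one hyperbolic singularity; and (2) an \emph{isotopy move}, where $A_t$ moves by an ambient isotopy of $S$ rel $\partial S$, realized by the trace surface $\{(A_t,t)\}$, which has no singularity at all since the $A_t$ all lie transversally in their pages. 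Composing cobordisms by stacking in $[0,1]$ (and rescaling) shows the class of pairs $(A,A')$ admitting an OB cobordism is an equivalence relation generated by these two moves, so it suffices to connect an arbitrary $A$ to $N(a)$ by such moves.

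Next I would run the reduction algorithm. Step one: use an isotopy move to push $A$ off the walls transversally, and then use saddle moves at each intersection point of $A$ with a wall to \emph{surger} $A$ along small arcs parallel to the wall, removing all wall intersections; the orientations let one always choose the resolution that eliminates an intersection pair. After finitely many such moves $A$ is disjoint from the walls, hence confined to the chambers. Step two: work chamber by chamber. In an annulus chamber the portion of $A$ is a collection of arcs and circles with fixed endpoints on $\gamma_0\cup\gamma_i$; arcs that are boundary-parallel or pairs of oppositely oriented essential arcs, and all closed circles, can be removed by saddle moves together with the obvious cap-off (a closed $c$-circle bounding a disc is killed by one saddle resolving it against a nearby arc, or, if none is available, by first introducing a trivial arc --- but since we only need the homology class to match, a more economical route is to observe that any null-homologous sub-multicurve can be turned into the empty set by a sequence of saddles, because in $S\times[0,1]$ one may cobound it with an embedded subsurface and put that subsurface in open-book-foliation general position using Theorem~\ref{prop:of}). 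What remains is exactly $|a\cap\text{annulus}|$ parallel arcs all oriented the same way, i.e.\ the annulus part of $N(a)$. Step three: in a once-punctured torus chamber, the same "kill the null-homologous part" principle, applied to $A$ minus a fixed torus-link representative of the local class, leaves precisely the required torus knot/link. Assembling the chambers gives $N(a)$.

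The key technical point --- and the one I expect to be the main obstacle --- is the claim that \emph{any null-homologous properly embedded oriented multi-curve $C$ in a page can be carried to the empty curve by a finite sequence of saddle and isotopy moves}, i.e.\ that it bounds an \emph{embedded} (not just immersed) oriented subsurface $\Sigma_0\subset S\times[0,1]$ whose induced foliation has only hyperbolic singularities. Since $[C]=0$ in $H_1(S,\partial S)$, $C$ bounds an embedded oriented surface $\Sigma_0$ in $S$ itself (a Seifert-type surface for a multicurve in a surface-with-boundary), and then $\Sigma_0\times\{\text{pt}\}$ pushed into $S\times[0,1]$ with its boundary dragged down to $C\times\{0\}$ gives the desired cobordism from $C$ to $\emptyset$; applying Theorem~\ref{prop:of} to this surface (rel its boundary, which is in braid position --- here meaning transverse to pages --- automatically) makes the foliation an open book foliation, i.e.\ all tangencies become hyperbolic, which is exactly condition $\mathbf{(\mathcal F\,\mathrm{iv})}$ and hence the last bullet of Definition~\ref{def of OB cobordism}. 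One must be careful that Theorem~\ref{prop:of} is stated for closed $F$ or $F$ with transverse boundary, which is our situation, and that the finger moves used in its proof keep $\partial\Sigma$ fixed, so the cobordism boundary conditions are preserved.

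Finally I would assemble: the cobordism $A\stackrel{\Sigma}{\to}N(a)$ is the concatenation in $S\times[0,1]$ of (i) an isotopy making $A$ transverse to the walls, (ii) wall-removing saddles, (iii) chamber-by-chamber null-homologous-part-killing cobordisms built via Theorem~\ref{prop:of} as above, and (iv) a final isotopy putting the result in the standard position $N(a)$. Each piece has only hyperbolic singularities (or none), the boundary conditions match by construction, and rescaling the $[0,1]$ coordinate of each piece and stacking yields a single properly embedded surface $\Sigma\subset S\times[0,1]$ satisfying all the bullets of Definition~\ref{def of OB cobordism}. The second sentence of the proposition --- that any two representatives of the same class are OB-cobordant --- then follows immediately: given $A,A'$ with $[A]=[A']=a$, concatenate $A\stackrel{\Sigma}{\to}N(a)$ with the reverse (turn $[0,1]$ upside down, which sends hyperbolic points to hyperbolic points and reverses orientations appropriately) of $A'\stackrel{\Sigma'}{\to}N(a)$.
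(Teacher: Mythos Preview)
Your overall architecture---saddle (``configuration change'') moves to remove wall intersections, then chamber-by-chamber cleanup---is exactly the paper's strategy, and the wall step is essentially identical to the paper's.  The gap is in the cleanup step, where you try to kill a null-homologous sub-multicurve $C$ by bounding it with a subsurface $\Sigma_0\subset S$, pushing $\Sigma_0$ into $S\times[0,1]$, and then invoking Theorem~\ref{prop:of} to make the induced foliation have only hyperbolic tangencies.

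The problem is that Theorem~\ref{prop:of} achieves condition $(\mathcal F\,\mathrm{iv})$ by \emph{finger moves}, and a finger move works by pushing a neighbourhood of the local extremum along an arc in a page out to the binding; the result is that the surface now meets the binding in two new \emph{elliptic} points (see Figure~\ref{Finger move} and Figure~\ref{foliation finger move}).  In the closed manifold $M_{(S,\phi)}$ this is harmless, but in $S\times[0,1]$ there is no binding to push around: the boundary $\partial S\times[0,1]$ is the boundary of the ambient $3$-manifold, so the finger move either pushes $\Sigma$ out of $S\times[0,1]$ entirely, or (if you stop at $\partial S\times[0,1]$) creates new boundary arcs of $\Sigma$ on $\partial S\times[0,1]$ that were not there before.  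Either way the last two bullets of Definition~\ref{def of OB cobordism} are violated---the OB cobordism must have $\Sigma\cap(\partial S\times[0,1])=\partial A\times[0,1]$ and only hyperbolic singularities.  A related issue: when $[C]=0$ in $H_1(S,\partial S)$ but $[C]\neq 0$ in $H_1(S)$ (e.g.\ a boundary-parallel $c$-circle in a torus chamber, exactly the paper's Step~3 situation), any Seifert surface $\Sigma_0\subset S$ for $C$ necessarily has boundary on $\partial S$, so your graph-type embedding already touches $\partial S\times[0,1]$ illegally before any finger moves.

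The paper sidesteps both issues by never appealing to Theorem~\ref{prop:of}: it gives an explicit, finite list of saddle moves (Steps~1--3 and Figures~\ref{step12}, \ref{Step-4}) that remove inessential discs, annulus-bounding pairs, and boundary-parallel circles in torus chambers, each move visibly producing a single hyperbolic point and nothing else.  Your approach can be repaired---for instance by observing that each chamber is a surface with boundary, so one can choose a height function on the capping surface with only index-$1$ critical points---but that argument has to be made directly and cannot be outsourced to Theorem~\ref{prop:of}.
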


\begin{proof}
We construct an oriented surface $\Sigma$ embedded in $S\times [0,1]$ with $\Sigma \cap S_0=-A$ and $\Sigma \cap S_1=N(a)$.
Let $w$ be one of the walls. 
Since $[A]=[N(a)]\in H_1(S, \partial S)$ and the normal form $N(a)$ does not intersect $w$, the algebraic intersection number $[A] \cdot w=0$. 
We take a collar neighborhood $\nu(w)\subset S$ of $w$ so that each component of $\nu(w) \cap A$ has geometric intersection number $1$ with $w$. 
The arcs $\nu(w) \cap A$ may not all have the same orientation. 
As $t\in [0, 1]$ increases we apply the configuration changes to pairs of consecutive arcs in $\nu(w) \cap A$ with opposite orientations as in the passage of Figure~\ref{configure-change}
\begin{figure}[htbp]
\begin{center}
\SetLabels
(.05*1) $w$\\
(.22*.6) $-$\\
(.77*.6) $+$\\
\endSetLabels
\strut\AffixLabels{\includegraphics*[height=1.4cm]{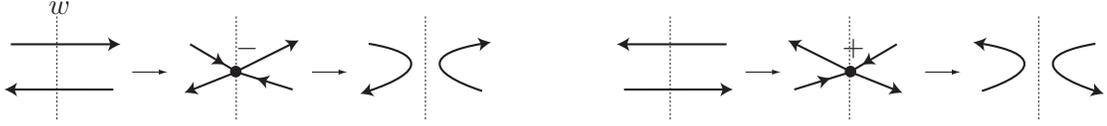}}
\caption{Configuration change of $\nu(w)\cap A$.}\label{configure-change}
\end{center}
\end{figure}
until we remove all the arcs of $\nu(w) \cap A$.
Each configuration change introduces a new hyperbolic singularity. 
We repeat the procedure for all the walls. The deformed multi-curve $A$, which we denote $A'$, no longer intersects the walls.

The multi-curve $A' \subset S$ may contain null-homologous sets of $c$-circles. 
We remove them by the following three steps.

{\bf (Step 1)}
If there exist $c$-circles bounding concentric discs in a chamber $H$ of $S$ and oriented in the same direction, then we remove them from the outermost one. 
We can find a describing arc of a hyperbolic point (cf. Figure~\ref{fig:hyperbolic}) that joins the outermost $c$-circle and some curve in  $A' \cap H$ and is properly embedded in $H \setminus A'$. 
As shown in the top row of Figure \ref{step12} one hyperbolic singularity is introduced then the $c$-circle disappears. 
The sign of the hyperbolic singularity is $+1$ if and only if the $c$-circle is oriented clockwise. 
\begin{figure}[htbp]
 \begin{center}
   \includegraphics[width=80mm]{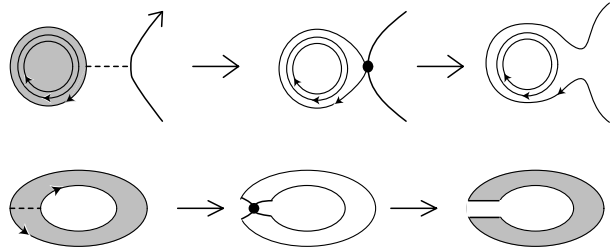}
   \caption{Step 1 (top) and Step 2 (bottom).}
\label{step12}
 \end{center}
\end{figure}

{\bf (Step 2)}
If there is a pair of $c$-circles with opposite orientations that bounds an annulus in $S \setminus A'$, then remove the pair by introducing a hyperbolic singularity of sign $\e$ between the two $c$-circles as in Figure \ref{step12}. 
The resulting $c$-circle bounds a disc that can be removed by Step 1 with the expense of another hyperbolic singularity of sign $-\e$.

{\bf (Step 3)}
Let $H$ be a once-punctured torus chamber of $S$.
After Steps 1, 2, there exist $p, q, r \in \Z$ such that in $H$ the multi-curve $A'$ is the union of $(p, q)$ torus link and $r$ boundary parallel $c$-circles oriented in the same direction.
As in Figure~\ref{Step-4} we remove the $c$-circles by introducing $r$ hyperbolic points of the same sign 
The sign depends on the signs of $p, q$ and  the orientation of the boundary parallel $c$-circles. 
\begin{figure}[htbp]
\begin{center}
\SetLabels
(.02*.9) $H$\\
(.02*.3) $H$\\
(.27*.8) $p$\\
(.27*.25) $p$\\
(.14*.94) $q$\\
(.14*.37) $q$\\
(0*.67) $r$\\
(0*.1) $r$\\
(.63*.8) $p$\\
(.47*.94) $q-1$\\
(.43*.63) $r-1$\\
(1.05*.65) $r-1$\\
(1.05*.1) $r-1$\\
(1*.25) $p$\\
(.85*.38) $q$\\
\endSetLabels
\strut\AffixLabels{\includegraphics*[height=6cm]{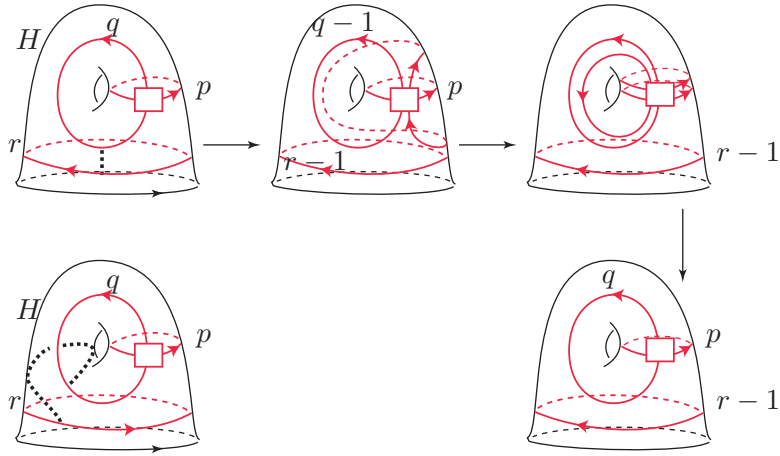}}
\caption{Step 3. Remove boundary parallel null-homologous $c$-circles by configuration changes along the dashed arcs.}\label{Step-4}
\end{center}
\end{figure}

Now $A'$ is deformed to the normal form $N(a)$. Hence we get a desired surface $\Sigma$. 
\end{proof}

\begin{proposition}
For an OB cobordism $A \stackrel{\Sigma}{\rightarrow} A'$ let $h_{+}(\mF_{\Sigma})$ (resp. $h_{-}(\mF_{\Sigma})$) denote the number of the positive (resp. negative) hyperbolic singularities of $\mF_{\Sigma}$. 
The value $$d(A \stackrel{\Sigma}{\rightarrow} A'):=h_{+}(\mF_\Sigma) -h_{-}(\mF_\Sigma)$$ is independent of the choice of cobordism surface $\Sigma$ and it only depends on the multi-curve representatives $A$ and $A'$.  
Hence we may denote 
$$d(A,A'):=d(A \stackrel{\Sigma}{\rightarrow} A').$$ 
\end{proposition}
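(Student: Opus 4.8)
The plan is to show that $d(A\stackrel{\Sigma}{\rightarrow}A')$ depends only on the endpoints $A,A'$ by a standard ``the difference of two cobordisms is a closed cobordism'' argument, combined with the fact that a closed OB cobordism contributes $0$ to the signed count of hyperbolic points. First I would observe that by Proposition~\ref{poincare-hopf}, applied to a surface with an open book foliation having \emph{no} elliptic points (all OB cobordisms here live in $S\times[0,1]$ away from the binding, so the induced foliation has only hyperbolic singularities), the signed count $h_+-h_-$ of a \emph{closed} OB cobordism equals $-\chi$ of the closed surface; but I actually want the finer statement that $h_+-h_-$ is computed by a homological/characteristic-class quantity that vanishes. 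The cleanest route: glue the two cobordisms $\Sigma$ and $\Sigma'$ (both going $A\to A'$) along their common boundary, reversing orientation on one, to obtain a closed oriented surface $\widehat{\Sigma}$ embedded in $S\times S^1$ (capping the $[0,1]$ direction into a circle since the boundary data agree on $\partial S\times[0,1]$ and on $A\times\{0\}$, $A'\times\{1\}$), carrying an open book foliation all of whose singularities are hyperbolic. Then $d(A\stackrel{\Sigma}{\rightarrow}A') - d(A\stackrel{\Sigma'}{\rightarrow}A') = h_+(\widehat\Sigma) - h_-(\widehat\Sigma)$, and I must show the right-hand side is zero.

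To see that $h_+-h_-=0$ for such a closed surface, I would use the interpretation from Theorem~\ref{identity theorem} and Proposition~\ref{sl-formula-1}: with a compatible contact structure $\xi$ on $S\times S^1$ (the open book on this manifold being $(S,\id)$ up to the identification, or more precisely the relevant mapping torus), the quantity $h_+-h_- = \langle e(\xi),[\widehat\Sigma]\rangle + (e_+-e_-)$; since $e_+=e_-=0$ here, $h_+-h_- = \langle e(\xi),[\widehat\Sigma]\rangle$, which depends only on the homology class $[\widehat\Sigma]\in H_2$ and the Euler class $e(\xi)$. I then need $[\widehat\Sigma]=0$, equivalently that $\Sigma$ and $\Sigma'$ are homologous rel boundary in $S\times[0,1]$ — but $H_2(S\times[0,1],\partial;\Z)\cong H_2(S,\partial S;\Z)\oplus H_1(S,\partial S;\Z)$ and, because $\Sigma$ projects to $S$ with the slices $\Sigma\cap S_t$ all representing the \emph{fixed} class $[A]=[A']$, the relevant component of $[\Sigma]$ is pinned down by $[A]$ alone, so $[\Sigma]=[\Sigma']$ rel boundary. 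Hence $[\widehat\Sigma]=0$ and $h_+-h_-=0$.

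Alternatively — and this may be cleaner to write without invoking the contact structure — I would argue directly: a closed oriented surface $\widehat\Sigma$ in $S\times S^1$ foliated by the fibers $S\times\{\mathrm{pt}\}$, with only hyperbolic (saddle) tangencies, has $h_+ - h_-$ equal to the algebraic count of tangencies, which is exactly the algebraic intersection of $\widehat\Sigma$ with a generic vertical fiber's worth of tangency locus; an index/degree computation (the Gauss-type map sending a tangency to the relative orientation in $\pi_0(\{\pm1\})$) shows this count is a homological invariant of $[\widehat\Sigma]$, and it vanishes when $[\widehat\Sigma]=0$ in $H_2(S\times S^1;\Z)$.

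The main obstacle I anticipate is the gluing/capping step and verifying that the resulting $\widehat\Sigma$ is genuinely a \emph{closed} surface carrying a bona fide open book foliation (in particular that no elliptic points are created and that the boundary behavior along $\partial S\times S^1$ is controlled), together with the homology bookkeeping showing $[\Sigma]$ rel boundary is determined by $[A]$. Once $[\widehat\Sigma]=0$ is established, the vanishing of $h_+-h_-$ is either the cited characteristic-foliation self-linking formula (via Theorem~\ref{identity theorem}) or a short Poincar\'e--Hopf/degree argument, neither of which should present difficulties.
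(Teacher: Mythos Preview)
Your overall strategy---glue the two cobordisms, reverse orientation on one, obtain a closed surface in the open book $(S,\id)$, and show the signed hyperbolic count vanishes there---is exactly the paper's approach. But two of your implementation steps are off, and the paper's fixes are cleaner than the patches you anticipate.

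First, the claim that the glued surface has \emph{no} elliptic points is false in general. An OB cobordism $A\stackrel{\Sigma}{\to}A'$ allows $A$ to have arc components with endpoints on $\partial S$, and $\partial\Sigma$ contains the vertical pieces $\partial A\times[0,1]\subset \partial S\times[0,1]$. When you close up into the genuine open book $M_{(S,\id)}$ (not $S\times S^1$, which has boundary and is not the right ambient manifold), each circle $\{p\}\times S^1$ for $p\in\partial A$ becomes a meridian of the binding, and the resulting closed surface $F$ meets the binding in elliptic points. The paper observes that the two endpoints of each arc of $A$ give elliptic points of opposite sign, hence $e_+(\F(F))=e_-(\F(F))$; this is the correct replacement for your $e_\pm=0$.

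Second, your plan to kill $h_+-h_-$ by showing $[\widehat\Sigma]=0$ in $H_2$ is both harder and unnecessary. The homology of $M_{(S,\id)}$ is nontrivial in general and the bookkeeping you sketch (pinning down $[\Sigma]$ rel boundary by the slice class $[A]$) would need more care. The paper sidesteps this entirely: the contact structure $\xi_{\id}$ supported by $(S,\id)$ has \emph{vanishing Euler class}, so by Proposition~\ref{sl-formula-1} one gets
\[
0=\langle e(\xi_{\id}),[F]\rangle = (e_+-e_-)-(h_+-h_-) = -(h_+-h_-),
\]
regardless of $[F]$. This is the key input you are missing.
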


\begin{proof}
Suppose that $A \stackrel{\Sigma'}{\rightarrow} A'$ is another OB cobordism. 
We embed $-\Sigma'$ in $S\times[1, 2]$.
We glue $\Sigma$ and $-\Sigma'$ at the page $S_{1}$ and obtain a surface $F'$ in $S \times [0,2]$.
Since $F' \cap S_2 = A = -(F' \cap S_0)$, we can further identify $F' \cap S_2$ and $F' \cap S_{0}$ by the identity map that defines a surface $F$ embedded in the open book $(S, \rm{id})$. 
Since a $\pm$-hyperbolic singular point in $\mF_{\Sigma'}$ turns to a $\mp$-hyperbolic point in $\mF_{-\Sigma'}$ we have $d(A' \stackrel{-\Sigma'}{\rightarrow} A) = -d(A \stackrel{\Sigma'}{\rightarrow} A')$ and
\begin{equation}\label{hyp pts of F_*}
h_+(\F(F))- h_-(\F(F)) = d(A \stackrel{F}{\rightarrow} A)=
d(A \stackrel{\Sigma}{\rightarrow} A') - 
d(A \stackrel{\Sigma'}{\rightarrow} A').
\end{equation}
By Definition~\ref{def of OB cobordism} the elliptic points in $\F(F)$ correspond to the lines $\partial A \times [0,2]$. 
Since the endpoints of each arc component of $A$ correspond to two elliptic points of opposite signs we get 
\begin{equation}\label{elliptic points of F_*}
e_{+}(\F(F)) = e_{-}(\F(F)).
\end{equation}
Let $\xi_{id}$ be the contact structure supported by the open book $(S,id)$.
Since the Euler class of $\xi_{id}$ is equal to zero, by Proposition \ref{sl-formula-1}, (\ref{hyp pts of F_*}) and (\ref{elliptic points of F_*}), we have
$$0 = \langle e(\xi_{id}), [F] \rangle = d(A \stackrel{\Sigma}{\rightarrow} A') - d(A \stackrel{\Sigma'}{\rightarrow} A').$$
\end{proof}

We are ready to define the function $c([\phi], a)$. 
The following definition is geometric. 
Later we study algebraic properties of $c([\phi], a)$ in Propositions~\ref{c-planar}, \ref{c-genus} and Theorem~\ref{theorem:c-formula}.

Let $\MCG(S)$ denote the {\em mapping class group} of $S$, that is the group of isotopy classes of orientation preserving homeomorphisms of $S$ fixing the boundary $\partial S$ pointwise.

\begin{definition}\label{def of function c}
Let $[\phi] \in \MCG(S)$ and $a \in H_{1}(S,\partial S)$. 
Define
\[ c([\phi], a) := d(\phi ( N(a)), N(\phi_{*}a)). \]

In general, the multi-curve $\phi(N(a))$ may not be isotopic to $N(\phi_{*}(a))$.
But if $\phi(N(a))$ is isotopic to $N(\phi_{*}(a))$ we can choose an OB cobordism $\phi(N(a))\stackrel{\Sigma}{\rightarrow}N(\phi_{*}(a))$ to be product $\Sigma\simeq\phi(N(a))\times [0,1]$ with no hyperbolic singularities, hence $c([\phi],a) = 0$. We call such an OB cobordism {\em trivial}.
\end{definition}

\subsection{A self-linking number formula for braids}\label{sec:sl-formula} 

{\quad}\\
Let $S=S_{g, r}$ be an oriented genus $g$ surface with $r$ boundary components $\gamma_1, \ldots, \gamma_r$. 
The orientation of $\gamma_i$ is induced from that of $S$.
Let $b$ be an $n$-stranded braid in $S \times [0,1]$ with $b \cap S_1= b \cap S_0 = \{x_1, \ldots, x_n \} \subset S$.
By braid isotopy we may assume that points $x_1, \ldots, x_n$ are lined up in this order on an arc parallel to and very close to $\gamma_0$. 
The arc $\{x_i\}\times[0,1]$ is called the {\em$i$-th braid strand} in $S\times [0,1]$.
We define oriented loops, $\rho_i \subset S$ ($i=1, \ldots, 2g+r-1$) with the base point $x_n$ as in Figure~\ref{braid-generator}. 
\begin{figure}[htbp] 
\begin{center}
\SetLabels
(0*.28) $\upsilon_1$\\
(.08*.2) $\gamma_0$\\
(.83*.53) $\gamma_1$\\
(.57*.53) $\gamma_{r-1}$\\
(.81*.38) $\rho_1$\\
(.54*.4) $\rho_{r-1}$\\
(.47*.53) $\rho_{r}$\\
(.4*.8)  $\rho_{r+1}$\\
(.3*.4) $\rho_{2g+r-2}$\\
(.17*.85) $\rho_{2g+r-1}$\\
(.86*.78) $\rho_1'$\\
(.6*.9) $\rho_{r-1}'$\\
(.13*.15) $y_1$\\
(.48*.03) $y_n$\\
(.34*0) $\upsilon_n$\\
(.16*.35) $x_1$\\
(.51*.16) $x_n$\\
(.8*.73) wall\\
\endSetLabels
\strut\AffixLabels{\includegraphics*[height=5cm]{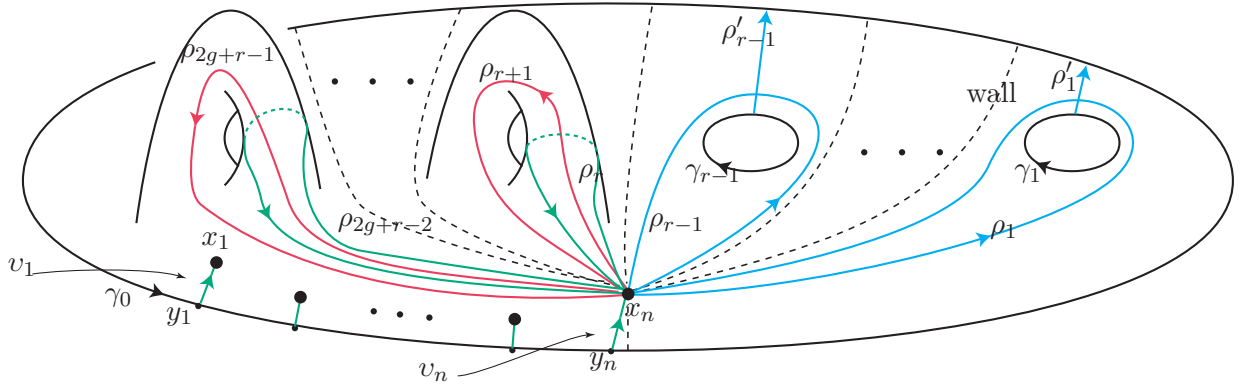}}
\caption{Surface $S$.}\label{braid-generator}
\end{center}
\end{figure}
Geometrically $\rho_i$ represents the $n$-th braid strand winding along $\rho_i$ as $t\in [0, 1]$ increases. 
Let $\sigma_i$ denote the positive half twist of the $i$-th and the $(i+1$)-th braid strands. 
As a consequence of the Birman exact sequence \cite{b}, the braid $b$ is represented by a braid word 
$b_1^{\e_1} b_2^{\e_2} \cdots b_l^{\e_l}$ (read from the left) where
$b_i\in\{\rho_1, \ldots, \rho_{2g+r-1}, \sigma_1, \ldots, \sigma_{n-1}\}$ and $\e_i\in\Z \setminus\{0\}$.

Fix a diffeomorphism $\phi \in {\rm Aut}(S, \partial S)$.
Since $x_i$ is near $\gamma_1 \subset \partial S$, we have $\phi(x_i)=x_i$ and identify $\{x_i\}\times \{1\}$ and $\{x_i\}\times \{0\}$ under $\phi$ that yields a closed braid $\hat b$ in $M_{(S, \phi)}$. 
We assume that $\hat b$ is null-homologous in the rest of the section.

\begin{claim}\label{claim:a}
Put $[b]=\sum_{i=1}^l \e_i [b_i] \in H_1(S; \Z)$, where we set $[\sigma_k]=0$ for $k=1, \ldots, n-1$.
Then there exists a {\em (not necessarily unique)}  homology class $a \in H_1(S, \partial S;\Z)$ such that $[b] = a - \phi_*(a)$ in $H_1(S;\Z)$.
\end{claim}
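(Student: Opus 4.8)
The plan is to use the hypothesis that $\hat b$ is null-homologous in $M_{(S,\phi)}$ and translate this into the stated homological equation in $H_1(S;\Z)$. First I would recall the Wang exact sequence (equivalently, the Mayer--Vietoris or the long exact sequence of the mapping torus fibration) for the mapping torus $M_\phi = S\times[0,1]/(x,1)\sim(\phi(x),0)$:
\[
H_1(S;\Z) \xrightarrow{\ \id - \phi_*\ } H_1(S;\Z) \longrightarrow H_1(M_\phi;\Z) \longrightarrow H_0(S;\Z) \xrightarrow{\ \id-\phi_*\ } H_0(S;\Z).
\]
Since $S$ is connected, $\id-\phi_*$ is zero on $H_0$, so $H_1(M_\phi;\Z)$ is an extension of $\Z$ by $\mathrm{coker}(\id-\phi_*)$ on $H_1(S;\Z)$. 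Then I would pass from $M_\phi$ to $M_{(S,\phi)}$ by filling in the binding solid tori: capping off $\partial S$ kills precisely the classes of $\partial S$-parallel curves, but more to the point, the class $[\hat b]\in H_1(M_{(S,\phi)};\Z)$ of the closed braid can be computed from a braid word.

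The key computation is to identify the image of $[\hat b]$ under the natural maps. Writing $b$ as the braid word $b_1^{\e_1}\cdots b_l^{\e_l}$ with $b_i\in\{\rho_1,\dots,\rho_{2g+r-1},\sigma_1,\dots,\sigma_{n-1}\}$, the only generators contributing to homology are the $\rho_i$ (the half-twists $\sigma_k$ are supported in a disc and contribute $0$, which is exactly why the excerpt sets $[\sigma_k]=0$). Tracking the $n$-th strand as it winds along the loops $\rho_i$ shows that, in the mapping torus, the homology class $[\hat b]$ maps, under the connecting/quotient map $H_1(M_\phi;\Z)\to H_1(M_{(S,\phi)};\Z)$ and then to its projection, to the class of $[b]=\sum_{i=1}^l\e_i[b_i]$ modulo the image of $\id-\phi_*$. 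The hypothesis that $\hat b$ is null-homologous in $M_{(S,\phi)}$ forces this projection to vanish, i.e.\ $[b]\in \mathrm{im}(\id-\phi_*)$ as a subgroup of $H_1(S;\Z)$. Hence there is $a_0\in H_1(S;\Z)$ with $[b]=a_0-\phi_*(a_0)$; finally I would lift $a_0$ along the surjection $H_1(S,\partial S;\Z)\twoheadleftarrow$ no --- rather use that $H_1(S;\Z)\to H_1(S,\partial S;\Z)$ together with Poincar\'e--Lefschetz duality lets one realize any absolute class, so one may take $a\in H_1(S,\partial S;\Z)$ mapping to a class with $[b]=a-\phi_*(a)$ in $H_1(S;\Z)$, using that $\phi_*$ is defined compatibly on both $H_1(S;\Z)$ and $H_1(S,\partial S;\Z)$ since $\phi$ fixes $\partial S$ pointwise.

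I expect the main obstacle to be the bookkeeping in passing between $H_1(S;\Z)$ and $H_1(S,\partial S;\Z)$ and making sure the lift $a$ can be chosen in relative homology rather than merely absolute homology: one needs that the composite $H_1(S;\Z)\to H_1(S,\partial S;\Z)$ is surjective (true, since $H_0(\partial S;\Z)\to H_0(S;\Z)$ is surjective), and that $\phi_*$ commutes with this map, so that solving $[b]=a-\phi_*(a)$ in $H_1(S;\Z)$ and then pushing $a$ forward is consistent. A secondary, more delicate point is justifying that the homology class of the closed braid really is computed by the word sum $\sum\e_i[b_i]$ modulo $\mathrm{im}(\id-\phi_*)$; this follows from the construction of $\hat b$ as the $n$-th strand traversing the loops $\rho_i$ inside a single page (so its class in the page is $[b]$) and then closing up across the monodromy (which introduces exactly the relation $v\sim\phi_*(v)$), together with the fact that all of this is disjoint from the binding and survives the solid-torus filling. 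The non-uniqueness of $a$ asserted in the claim is simply the ambiguity by $\ker(\id-\phi_*)$ (the $\phi_*$-invariant classes), plus the kernel of $H_1(S;\Z)\to H_1(S,\partial S;\Z)$, and needs no separate argument.
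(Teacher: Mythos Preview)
Your approach has a genuine gap when the page $S$ has more than one boundary component. The Wang sequence for the mapping torus $M_\phi$ gives you relations coming from $(\id-\phi_*)$ acting on $H_1(S;\Z)$, and you then try to solve $[b]=a_0-\phi_*(a_0)$ with $a_0\in H_1(S;\Z)$ and push $a_0$ forward to $H_1(S,\partial S;\Z)$. Both steps fail for $r>1$. First, your assertion that $H_1(S;\Z)\to H_1(S,\partial S;\Z)$ is surjective is incorrect: from the long exact sequence of the pair, surjectivity is equivalent to $H_0(\partial S)\to H_0(S)$ being \emph{injective}, which holds only when $r=1$. Second, and more seriously, the image of $(\id-\phi_*):H_1(S;\Z)\to H_1(S;\Z)$ is in general strictly smaller than the subgroup of relations that actually hold in $H_1(M_{(S,\phi)};\Z)$. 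The additional relations come precisely from the Dehn fillings along the extra binding components, and they are indexed by the arc classes $[\rho_j']\in H_1(S,\partial S;\Z)$ for $j=1,\dots,r-1$: since $\phi$ fixes $\partial S$ pointwise, $\rho_j'-\phi(\rho_j')$ is a closed curve whose class in $H_1(S;\Z)$ need not lie in $(\id-\phi_*)(H_1(S;\Z))$. A concrete instance is the annulus open book $(A,T_A^{\,n})$: here $\phi_*=\id$ on $H_1(S;\Z)$, so $(\id-\phi_*)(H_1(S;\Z))=0$, yet $H_1(M_{(S,\phi)};\Z)\cong\Z/n\Z$ and any braid $b$ with $[b]$ a nonzero multiple of $n$ is null-homologous; you cannot find your $a_0$.

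The paper avoids this by quoting the Etnyre--Ozbagci presentation of $H_1(M_{(S,\phi)};\Z)$, whose relations are exactly $(\id-\phi_*)[\rho_i']$ for $i=1,\dots,2g+r-1$, i.e.\ the map $(\id-\phi_*):H_1(S,\partial S;\Z)\to H_1(S;\Z)$. Writing $[b]$ as a $\Z$-linear combination of these relations immediately produces $a=\sum s_i[\rho_i']\in H_1(S,\partial S;\Z)$ with $[b]=a-\phi_*(a)$. Your Wang-sequence computation for $M_\phi$ is fine as far as it goes, but you would need to carry out the Mayer--Vietoris for the filling carefully enough to recover exactly these arc-indexed relations---that computation is essentially the Etnyre--Ozbagci result, and it is what your sketch is missing.
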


\begin{proof}
The homology group of the manifold $M_{(S, \phi)}$ is  computed by Etnyre and Ozbagci \cite[p.3136]{eo}:
$$
H_1(M_{(S, \phi)}; \Z)= 
\left\langle
[\rho_1], \ldots, [\rho_{2g+r-1}] \ |\
[\rho_i'] - \phi_* [\rho_i'] = 0, \quad 
i=1, \ldots, 2g+ r-1 \\
\right\rangle,
$$
where 
$$
\rho_i' = \left\{
\begin{array}{l}
\mbox{a properly embedded arc from } \gamma_0 \mbox{ to } \gamma_i \mbox{ and dual to }\rho_i, \mbox{ for }i=1, \ldots, r-1,\\
\rho_i,  \mbox{ for }i=r, \ldots, 2g+ r-1.
\end{array}\right.
$$
Though $\rho_i'$ is an arc for $i=1, \ldots, r-1$, since $\phi= {\rm id}$ on $\partial S$, we can view $\rho_i' \cup \phi(-\rho_i')$ as an oriented (immersed) loop in ${\rm Int}(S)$. Then we consider $[\rho_i'] - \phi_* [\rho_i']\in H_1(S; \Z)$ representing the loop $\rho_i' \cup \phi(-\rho_i')$.

Since $[\hat b] = 0$ in $H_1(M;\Z)$, there exist $s_i \in \Z$ for $i=1, \ldots, 2g+ r-1,$ such that 
$$[b]= \sum_{i=1}^{2g+ r-1} s_i ([\rho_i'] - \phi_* [\rho_{i}']) \ \mbox{ in } \ H_1(S; \Z).$$
Hence if we put $a=\sum_{i=1}^{2g+ r-1} s_i [\rho_i']$, under the identification $[\rho_i'] - \phi_* [\rho_i']= [\rho_i' \cup \phi(-\rho_i')]$, we have $[b]=a - \phi_*(a)$. 
\end{proof}

\begin{definition}
For homology classes $[a_1] \in H_1(S, \partial S; \Z)$ and $[a_2] \in H_1(S; \Z)$ we denote the {\em algebraic intersection number} by $[a_1] \cdot [a_2] \in \Z$. It counts the transverse intersections of representatives $a_1$ and $a_2$ algebraically in the way described in Figure~\ref{crossing-count}.
For example, we have $[\rho_1']\cdot [\rho_1]= 1$ and $[\rho_{r}] \cdot [\rho_{r+1}] = 1$.
\begin{figure}[htbp]
\begin{center}
\SetLabels
(.3*.9) $a_1$\\
(.7*.9) $a_1$\\
(-.03*.9) $a_2$\\
(1.03*.9) $a_2$\\
(.17*.45) $+$\\
(.9*.45) $-$\\
\endSetLabels
\strut\AffixLabels{\includegraphics*[height=2cm]{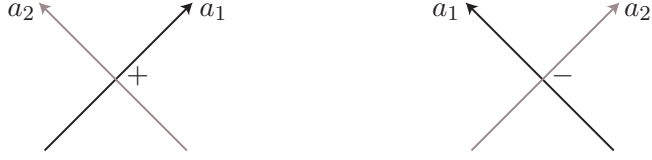}}
\caption{Algebraic intersection number $[a_1] \cdot [a_2]$.}\label{crossing-count}
\end{center}
\end{figure}
\end{definition}

Here is our main theorem of this section:

\begin{theorem}[Self linking number formula]\label{theorem:sl-formula}
Let $[b] \in H_1(S;Z)$ and $\hat b$ be as above. 
Let $a \in H_1(S, \partial S;\Z)$ be a homology class such that $[b] = a - \phi_*(a)$ in $H_{1}(S;\Z)$ $($see Claim~\ref{claim:a}$)$.
Recall the function $c([\phi], a)$ in Definition~\ref{def of function c}. 
For the choice of $a\in H_1(S, \partial S;\Z)$ there exists a Seifert surface $\Sigma=\Sigma_a$ of $\hat{b}$ such that the self-linking number satisfies the formula:
\begin{equation}\label{eq:self-linking}
sl(\hat{b}, [\Sigma]) = -n + \widehat{\exp}(b) - \phi_*(a)\cdot[b] + c([\phi] , a), 
\end{equation}
where 
$$\widehat{\exp}(b)= \sum_{i=1}^l \e_i-\sum_{1\leq j<i \leq l} \e_i \e_j [b_j] \cdot [b_i].$$
\end{theorem}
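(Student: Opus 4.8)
The plan is to produce an explicit Seifert surface $\Sigma=\Sigma_a$ of $\hat b$ whose open book foliation $\F(\Sigma)$ can be read off from the braid word $b_1^{\e_1}\cdots b_l^{\e_l}$ and the class $a$, to count its elliptic and hyperbolic points, and then to invoke Proposition~\ref{sl-formula-1}. Since that proposition gives $sl(\hat b,[\Sigma])=-(e_+-e_-)+(h_+-h_-)$ for $\F(\Sigma)$, the formula \eqref{eq:self-linking} is equivalent to the two identities
\[ e_+-e_-=n, \qquad h_+-h_-=\widehat{\exp}(b)-\phi_*(a)\cdot[b]+c([\phi],a), \]
taken for $\F(\Sigma)$.

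I would build $\Sigma$ through its movie presentation. A generic slice $\Sigma\cap S_t$ consists of $n$ arcs, the $i$-th running from the braid point $x_i$ to the binding component near $\gamma_0$, together with some auxiliary $b$-arcs and $c$-circles, arranged so that the relative class carried by the slices is $a$; one may concentrate $a$ along the arc issuing from $x_n$ and take the normal form $N(a)$ for the remaining closed part. Subdividing $[0,1]$ into one block per syllable $b_k^{\e_k}$, followed by a terminal block, I build $\Sigma$ as follows. A syllable $\sigma_j^{\e_k}$ is realised by $|\e_k|$ half-twisted bands between the $j$-th and $(j+1)$-th strands; exactly as for the classical Bennequin surface each band produces one hyperbolic point in an $aa$-tile of sign $\sgn(\e_k)$ and no new elliptic points, so the block contributes $\e_k$ to $h_+-h_-$. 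A syllable $\rho_i^{\e_k}$ is realised by dragging the arc issuing from $x_n$ $|\e_k|$ times around $\rho_i$, performed by a succession of configuration changes whose describing arcs run along $\rho_i$, each producing one hyperbolic point and no elliptic points. After all the syllable blocks the relative class carried by the slice has been modified by $[b]$ (this is forced by $\hat b$ being null-homologous; compare Claim~\ref{claim:a}), and a terminal block, consisting of an OB cobordism that exists by Proposition~\ref{prop existence of OB corbordism}, normalises the slice into the position dictated by the monodromy identification $(x,1)\sim(\phi(x),0)$; by construction this OB cobordism runs from $\phi(N(a))$ to $N(\phi_*a)$, hence by Definition~\ref{def of function c} it contributes $h_+-h_-=c([\phi],a)$. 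A routine verification shows $\partial\Sigma=\hat b$, that $\Sigma$ may be taken connected, and that $\F(\Sigma)$ is a genuine open book foliation.

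It remains to count. Every elliptic point of $\F(\Sigma)$ lies either at the binding end of one of the $n$ arcs issuing from the braid points — these are $a$-arcs, hence positive by Claim~\ref{sign-observation}, and there are $n$ of them — or at an end of an auxiliary $b$-arc, which by Claim~\ref{sign-observation} contribute equal numbers of positive and negative elliptic points; configuration changes and band attachments introduce none. Hence $e_+-e_-=n$. For the hyperbolic count, the $\sigma$-blocks contribute $\sum_{b_k=\sigma}\e_k$ and the $\rho$-windings contribute $\sum_{b_k=\rho}\e_k$, together $\sum_{i=1}^l\e_i$; in addition the winding for $b_i$ is forced to cross that for an earlier syllable $b_j$ ($j<i$) in $\e_i\e_j\,[b_j]\cdot[b_i]$ algebraically counted points and to cross the $\phi$-image of the level-$0$ configuration in $\phi_*(a)\cdot[b]$ algebraically counted points, each such crossing producing one hyperbolic point of sign opposite to the crossing sign of Figure~\ref{crossing-count}, for a total correction $-\sum_{j<i}\e_i\e_j[b_j]\cdot[b_i]-\phi_*(a)\cdot[b]$; finally the terminal block contributes $c([\phi],a)$. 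Adding these gives $h_+-h_-=\widehat{\exp}(b)-\phi_*(a)\cdot[b]+c([\phi],a)$, and Proposition~\ref{sl-formula-1} yields \eqref{eq:self-linking}.

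The main obstacle is the local analysis of the $\rho$-winding blocks together with the ensuing sign bookkeeping: one must arrange the configuration changes along each $\rho_i$ so that the resulting foliation really satisfies the axioms \textbf{($\mF$ i, ii, iii, iv)}; show that each band--band and band--configuration intersection contributes exactly one hyperbolic point with the sign dictated by Figure~\ref{crossing-count}, so that the geometric counts collapse to the algebraic intersection numbers $[b_j]\cdot[b_i]$ and $\phi_*(a)\cdot[b]$; check that no spurious elliptic points are created; and, most delicately, match the direction of the monodromy identification with the placement of the carrier so that the terminal OB cobordism is precisely the one computing $d(\phi(N(a)),N(\phi_*a))=c([\phi],a)$. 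By comparison the $\sigma$-blocks, the elliptic count, and the reduction via Proposition~\ref{sl-formula-1} are routine.
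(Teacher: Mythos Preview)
Your proposal is correct and follows essentially the same approach as the paper: build $\Sigma$ via a movie presentation consisting of syllable blocks (one per $b_k^{\e_k}$) together with an OB cobordism block realising $c([\phi],a)$, then count singularities and apply Proposition~\ref{sl-formula-1}. The paper places the OB cobordism block first (in $S\times[0,\tfrac12]$, going from $\phi(N(a))$ to $N(\phi_*a)$) and the syllable blocks afterwards (in $S\times[\tfrac12,1]$, going from $N(\phi_*a)$ to $N(a)$), whereas you describe the reverse order, but in the mapping torus this is immaterial; the paper's detailed case analysis of the $\rho$-blocks (tracking the accumulated slice class $[u]=\phi_*a+\sum_{j<i}\e_j[b_j]$ and counting the $[u]\cdot[\rho_k]$ crossings as $x_n$ winds along $\rho_k$) is exactly the local analysis you flag as the main obstacle.
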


\begin{remark}
The formula (\ref{eq:self-linking}) is a generalization of Bennequin's self linking formula of braids in the open book $(D^2, \id)$ \cite{Ben} and it also covers the works in \cite{kp} and \cite{k}. 
When $(S, \phi)=(D^{2}, \id)$ the function $\widehat{\exp}$ is equal to the usual exponent sum, $\exp: B_{n} \rightarrow \Z$, for the Artin braid group $B_n$ and  $\phi_*(a) \cdot [b] = c([\phi] , a) = 0$. 
Thus the formula (\ref{eq:self-linking}) contains Bennequin's self linking formula
\[ sl (\hat{b}) = -n + \exp(b). \]
With more elaborate investigation of the function $c$ we will deduce the self-linking number formulae of \cite{kp}, \cite{k} in Corollary~\ref{cor:sl-planar} below. 
\end{remark}

\begin{proof}

For each $i=1, \ldots, n$, take a point $y_i$ on the binding component $\gamma_0$ near $x_i$ so that $y_1, \ldots, y_n$ lined up in this order with respect to the orientation of $\gamma_0$, see Figure~\ref{braid-generator}.
Choose a properly embedded arc $\upsilon_i$ from $x_i$ to $y_i$ that contained in a small collar neighborhood of $\gamma_0$ so that $\phi(\upsilon_i)=\upsilon_i$. We require that $\upsilon_1, \ldots, \upsilon_n$ are mutually disjoint.

\vspace{2mm}

(\textit{Construction of surface $\Sigma_* \subset S \times [0, \frac{1}{2}]$}) \ 
Fix $a\in H_1(S, \partial S;\Z)$ with $a-\phi_*(a)=[b]$. 
Let $N(a)$ denote the normal form of $a$, see Definition~\ref{def of normal form}. 
Let $A_1, A_{1/2}, A_0$ be oriented multi-curves in $S$ defined by: 
\begin{eqnarray*}
A_1 &=& \upsilon_1 \cup \cdots \cup \upsilon_n \cup N(a)\\
A_{1/2} &=& \upsilon_1\cup \cdots \cup \upsilon_n \cup N(\phi_* a)\\
A_0 = \phi(A_1) &=& \upsilon_1 \cup \cdots \cup \upsilon_n \cup \phi(N(a)) 
\end{eqnarray*}
Unlike $A_1$ or $A_{1/2}$ the multi-curve $A_0$ possibly intersects the walls. 
We have:   
\begin{eqnarray*}
[A_1] &=& a\\
{[A_{1/2}]}  &=& [A_0]  = \phi_* a
\end{eqnarray*}
Let $\phi(N(a)) \stackrel{\Sigma_\circ}{\rightarrow} N(\phi_*a)$ 
be an OB cobordism whose existence is guaranteed by  Proposition~\ref{prop existence of OB corbordism}. 
We compress $\Sigma_\circ$ vertically to fit in $S \times [0, 1/2]$ and take disjoint union with the vertical rectangle strips $(\upsilon_1 \cup \cdots \cup \upsilon_n) \times [0, 1/2]$. 
We call the resulting surface $\Sigma_*$.  
By the construction $$\Sigma_* \cap S_0 = - A_0, \quad \Sigma_* \cap S_{1/2} = A_{1/2},$$
where $S_0=S \times \{0\}$ and $S_{1/2}=S\times\{1/2\}$ are pages of the open book, and by Definition~\ref{def of function c} the algebraic count of the hyperbolic points of $\F(\Sigma_*)$ is $c([\phi], a)$.

\vspace{2mm}

(\textit{Construction of surface $\Sigma_{**} \subset S \times [\frac{1}{2}, 1]$}) \ 
The next goal is to construct an oriented surface $\Sigma_{**}$ embedded in $S\times [\frac{1}{2},1]$ with $\Sigma_{**} \cap S_1=A_1$ and $\Sigma_{**} \cap S_{1/2}=-A_{1/2}$.
Recall that $b$ is represented by the braid word $b_1^{\e_1} \cdots b_l^{\e_l}$. Let $I_i = [\frac{l+i-1}{2l}, \frac{l+i}{2l}]$ then $[\frac{1}{2}, 1]= I_1 \cup \cdots \cup I_l$.
We will build an oriented surface $\Sigma_i$ embedded in $S \times I_i$ inductively from $i=1$ to $l$ such that: 
\begin{enumerate}
\item
$\Sigma_1 \cap S_{1/2} = -A_{1/2}$ and $\Sigma_l \cap S_1 = A_1$.
\item
$\Sigma_{i} \cap S_{(l+i)/2l} = -(\Sigma_{i+1} \cap S_{(l+i)/2l})$. 
We denote this multi-curve on the page $S_{(l+i)/2l}$ by $A_{(l+i)/2l}$. 
\item
$A_{(l+i)/2l}$ does not intersect the walls. 
\item
$A_{(l+i)/2l}$ contains $\upsilon_1 \cup \cdots \cup \upsilon_n$ and 
any subset of $A_{(l+i)/2l} \setminus (\upsilon_1 \cup \cdots \cup \upsilon_n)$ has non-trivial homology in $H_1(S, \partial S)$,
\item 
$\partial \Sigma_i \cap (S \times \Int(I_i)) = b_i^{\e_i},$ so 
$[A_{(l+i)/2l}] = [A_{1/2}] + \e_1 [b_1] + \cdots + \e_i [b_i]$ in $H_1(S, \partial S)$.
\end{enumerate}
Eventually we will define $\Sigma_{**}=\Sigma_1 \cup \cdots \cup \Sigma_l$. 
Suppose that we have constructed $\Sigma_1, \ldots, \Sigma_{i-1}$ satisfying the above conditions.

{\bf(Case 1)} 
If the braid word $b_i=\sigma_k$, then as $t \in I_i$ increases, apply the deformation of the graph $A_{(l+i-1)/2l}$ as in the passage of Figure~\ref{sigma}
\begin{figure}[htbp]
\begin{center}
\SetLabels
(.03*.29) $y_k$\\
(.01*.45) $\upsilon_k$\\
(.11*.29) $y_{k+1}$\\
(.14*.45) $\upsilon_{k+1}$\\
(.89*.29) $y_k$\\
(.96*.29) $y_{k+1}$\\
(.87*.45) $\upsilon_k$\\
(1*.45) $\upsilon_{k+1}$\\
\endSetLabels
\strut\AffixLabels{\includegraphics*[height=3.5cm]{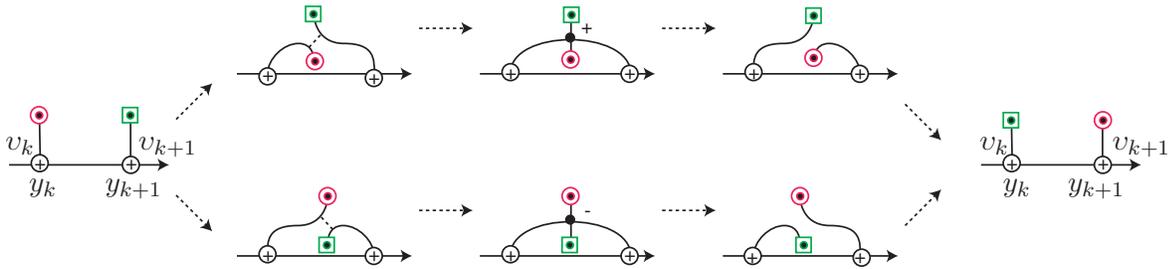}}
\caption{(Case 1) Deformation of graph $\upsilon_k \cup \upsilon_{k+1}$  corresponding to $\sigma_k$ (top) and $\sigma_k^{-1}$ (bottom). }\label{sigma}
\end{center}
\end{figure}
(where $\odot$ and $\boxdot$ denote the intersection of the braid $b$ and the page $S_t$) for $|\e_i|$ times that takes place in a small neighborhood of $\upsilon_k$ and $\upsilon_{k+1}$.
We call the surface that the graph traces out $\Sigma_i$.
The surface $\Sigma_i$ satisfies the above conditions (1)--(5) and the open book foliation $\F(\Sigma_i)$ has $|\e_i|$ hyperbolic singularities of ${\rm sgn}(\e_i)$.

{\bf(Case 2-1)} 
Suppose that $b_i = \rho_k$ and $\e_i=1$. 
Let $H$ be the chamber that $\rho_k$ belongs to.

Assume that $r \leq k \leq 2g+r-1$ so that $H$ is a torus with connected boundary.
For simplicity, put $u = A_{(l+i-1)/2l}$. 
By conditions (3), (4) above, we may assume that $u \cap H$ is some $(p,q)$-torus link. 
As $t \in I_i$ increases, move the point $x_n$ along $\rho_k$.  
See Figure~\ref{insertion-pair}. 
\begin{figure}[htbp]
\begin{center}
\SetLabels
(.15*.52) $x_n$\\
(.03*.84)  $H$\\
(.27*.8) $p$\\
(.9*.25) $p$\\
(.15*.95) $q$\\
(.75*.37) $q+1$\\
\endSetLabels
\strut\AffixLabels{\includegraphics*[height=7cm]{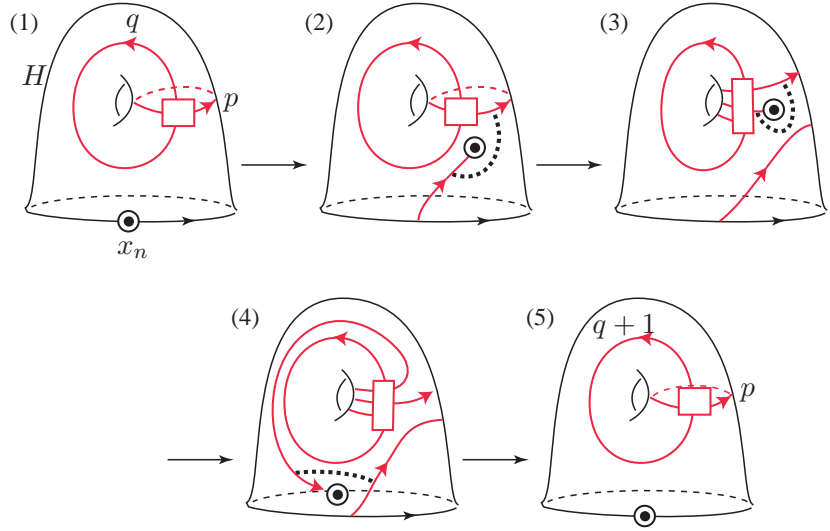}}
\caption{(Case 2-1) Construction of surface $\Sigma_i$ when $b_i^{\e_i}=\rho_k$.}\label{insertion-pair}
\end{center}
\end{figure}
To come back to the original position $x_n$ has to traverse $u$, which yields $p= [u] \cdot [\rho_k]$ many negative (=$-{\rm sgn}(\e_i)$) hyperbolic points. 
Moreover, the last step (Sketch (4)) adds one more hyperbolic singularity of positive ($={\rm sgn}(\e_i)$) sign. 
This defines the surface $\Sigma_i$ in $S \times I_i$. 
In summary, the value $h_+ - h_-$ increases by 
$${\rm sgn}(\e_i) \cdot 1 - [u] \cdot [\rho_k]$$ and the class $[u] \in H_1(S, \partial S)$ is replaced by $[u]+ [\rho_k]$ (compare Sketches (1) and (4)). 
No circle bounding a disc in $S$ has been created.

When $k=1, \ldots, r-1$ (i.e., the chamber $H$ is an annulus) a parallel  argument holds and the value $h_+ - h_-$ increases by ${\rm sgn}(\e_i) \cdot 1 - [u] \cdot [\rho_k]$.

{\bf(Case 2-2)} 
If $b_i=\rho_k$ and $\e_i \neq 1, 0$, repeat the above construction $|\e_i|$ times. 
Since $([u]+ [\rho_k])\cdot [\rho_k]  = [u] \cdot [\rho_k]$, the total change in $h_+ - h_-$ is 
\begin{eqnarray}\label{change in hyp points}
\e_i - \e_i [u]\cdot [\rho_k]
&\stackrel{(5)}{=}& 
\e_i - \e_i ([A_{1/2}] + \e_1 [b_1] + \cdots + \e_{i-1} [b_{i-1}]) \cdot [b_i] \\
&=& 
\e_i - (\sum_{j=1}^{i-1} \e_i \e_j [b_j] \cdot [b_i] ) - \e_i [A_{1/2}]\cdot [b_i]. \notag
\end{eqnarray}

After constructing $\Sigma_1, \ldots, \Sigma_l$, we glue them and obtain a desired surface $\Sigma_{**}$ in $S \times [1/2, 1]$ which increases the algebraic count of the hyperbolic singularities by
$$h_+ - h_- = \sum_{i=1}^l \e_i - (\sum_{i=1}^l \sum_{j=1}^{i-1} \e_i \e_j [b_j] \cdot [b_i] ) -  [A_{1/2}]\cdot[b].$$

Finally, we glue $\Sigma_*$ and $\Sigma_{**}$ by identifying $\Sigma_* \cap S_{1/2} = -(\Sigma_{**}\cap S_{1/2})$ and  $-(\Sigma_* \cap S_0) = \phi(\Sigma_{**} \cap S_1)$, which yields a Seifert surface $\Sigma$ for $\hat b$ in the open book $(S, \phi)$. 
By the construction, it is clear that $y_1, \ldots, y_n \in \gamma_0$ are positive elliptic points and the end points of arc $\rho_i'$ are elliptic points with distinct signs. 
By Proposition~\ref{sl-formula-1}, we obtain our self linking formula (\ref{eq:self-linking}).
\end{proof}

\subsection{Properties of the function $c$} 

\quad\\
In this section we study properties of the function $c$ in the self linking number formula (\ref{eq:self-linking}).
We will use the properties repeatedly in the later sections to deduce algebraic descriptions of the function $c$, which is originally defined geometrically.

\begin{proposition}\label{property_of_c}
Let $[\phi], [\psi] \in \MCG(S)$ be the mapping classes of $\phi, \psi \in Aut(S, \partial S)$ and $a,a' \in H_{1}(S, \partial S)$. We have:
\begin{enumerate}
\item $c([\phi], a + a') = c([\phi], a) + c([\phi], a')$. 
\item $c([\psi \phi], a) = c([\phi], a) + c([\psi],\phi_{*}(a))$.
\item Let $C$ be a simple closed curve which does not intersect the walls. Let $T_C$ denote the right-handed Dehn twist along $C$. We have $c([T_{C}],a) = 0$ for any $a$.
\item Let $C$ be a simple closed curve in $S$ such that $a \cdot [C] = 0$. Then $c([T_{C}], a) = 0$.
\end{enumerate}
In particular, {\rm (1)} and {\rm (2)} imply that the function $c$ induces a {\em crossed} homomorphism
\[ \mathcal C: \MCG(S) \rightarrow {\rm Hom} (H_{1}(S,\partial S),\Z) \simeq H^{1}(S;\Z); \qquad \phi \mapsto c([\phi], -). \]
\end{proposition}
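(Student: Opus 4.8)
The plan is to establish items (1)--(4) by tracking hyperbolic counts of OB cobordisms, exploiting the additivity of the quantity $d$ and the geometric flexibility in building cobordism surfaces. For item (1), given $a, a' \in H_1(S,\partial S)$, I would take the normal forms $N(a)$, $N(a')$, $N(a+a')$ and $N(\phi_*a)$, $N(\phi_*a')$, $N(\phi_*(a+a'))$. The key observation is that $d$ is additive under disjoint superposition of multi-curves in the following sense: if $A_1 \stackrel{\Sigma_1}{\to} A_1'$ and $A_2 \stackrel{\Sigma_2}{\to} A_2'$ are OB cobordisms realized in disjoint regions, then their union realizes $A_1 \cup A_2 \to A_1' \cup A_2'$ with $d$ adding. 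One should then build OB cobordisms $\phi(N(a)) \cup \phi(N(a')) \to N(\phi_*a) \cup N(\phi_*a')$ and $N(\phi_*a) \cup N(\phi_*a') \to N(\phi_*(a+a'))$, using that $d$ depends only on the multi-curve endpoints (the Proposition just before Definition~\ref{def of function c}); the concatenation gives the difference between $c([\phi], a) + c([\phi], a')$ and $c([\phi], a+a')$, and one checks this concatenated cobordism has algebraic hyperbolic count zero because it can be taken of product type at the homology level (or: the second piece contributes $d(N(\phi_*a)\cup N(\phi_*a'), N(\phi_*(a+a')))$, which must vanish since both sides are normal forms representing the same class and such a $d$ is zero).

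For item (2), the cocycle identity, I would concatenate OB cobordisms
\[
\psi\phi(N(a)) \stackrel{\Sigma_1}{\longrightarrow} \psi(N(\phi_*a)) \stackrel{\Sigma_2}{\longrightarrow} N(\psi_*\phi_*a) = N((\psi\phi)_*a),
\]
where $\Sigma_1$ is the image under $\psi$ of any OB cobordism $\phi(N(a)) \to N(\phi_*a)$ realizing $c([\phi],a)$ (applying a diffeomorphism of $S$ to an OB cobordism in $S \times [0,1]$ preserves hyperbolic singularities and their signs, so $d$ is unchanged), and $\Sigma_2$ realizes $c([\psi], \phi_*a)$. By the independence-of-$\Sigma$ proposition, $d$ of the concatenation equals $c([\phi],a) + c([\psi],\phi_*a)$, and it also equals $d(\psi\phi(N(a)), N((\psi\phi)_*a)) = c([\psi\phi],a)$. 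Item (3) follows because if $C$ misses the walls, then $T_C$ preserves the walls and one can arrange $T_C(N(a))$ to still be a multi-curve avoiding the walls and homologous to $N(a)$ (since $[T_C]_* = \id$ on $H_1(S,\partial S)$, as Dehn twists act trivially on homology relative to boundary up to $\pm[C]$-shifts and here the shift is along $C \subset \Int S$); one then builds a product-type cobordism to $N(a) = N((T_C)_*a)$ by an isotopy, so $c = 0$. Item (4) is the subtler case: when $a \cdot [C] = 0$ we can isotope a normal-form representative so that it meets $C$ in cancelling pairs, push it off $C$ by a sequence of configuration changes whose hyperbolic signs cancel in pairs (exactly the mechanism in Steps 1--3 of Proposition~\ref{prop existence of OB corbordism}), and thereby realize $T_C(N(a)) \to N(a)$ with $d = 0$.

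Finally, to conclude that $\mathcal C \colon \MCG(S) \to \Hom(H_1(S,\partial S),\Z)$ is a crossed homomorphism: item (1) says each $c([\phi],-)$ is a homomorphism $H_1(S,\partial S) \to \Z$, hence an element of $\Hom(H_1(S,\partial S),\Z) \simeq H^1(S;\Z)$ by the universal coefficient theorem (the group is free abelian), and item (2) is precisely the crossed-homomorphism (1-cocycle) identity $\mathcal C(\psi\phi) = \mathcal C(\phi) + \phi^*\mathcal C(\psi)$ once one identifies the action: $c([\psi\phi],a) = c([\phi],a) + c([\psi],\phi_*a)$ reads as $\mathcal C(\psi\phi)(a) = \mathcal C(\phi)(a) + (\mathcal C(\psi)\circ\phi_*)(a)$.

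I expect the main obstacle to be item (4): one must be careful that when pushing $N(a)$ off $C$ the hyperbolic singularities genuinely pair up with opposite signs, which requires the hypothesis $a \cdot [C] = 0$ to guarantee the signed count of intersection points of the sweeping arc with $C$ is zero at every stage, mirroring the bookkeeping in Steps 1--3 above. The other delicate point, recurring throughout, is verifying that a $d$-value between two normal forms representing the same homology class is zero; this should follow from the independence proposition together with the Euler-class computation for $(S,\id)$ used there, but it must be invoked cleanly rather than re-derived.
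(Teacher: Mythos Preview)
Your arguments for (2) and (4) are essentially the paper's, and the crossed-homomorphism conclusion is fine. However, there are genuine gaps in (1) and (3).

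For (1), your concatenation is incomplete and rests on a false claim. The quantity to compute is $d(\phi N(a+a'),\, N(\phi_*(a+a')))$, so the chain of cobordisms must begin at $\phi N(a+a')$, not at $\phi N(a)\cup\phi N(a')$; you have silently dropped the piece $\phi N(a+a')\to\phi N(a)\cup\phi N(a')$. More seriously, you assert that $d(N(\phi_*a)\cup N(\phi_*a'),\,N(\phi_*(a+a')))=0$ because ``both sides are normal forms.'' But $N(\phi_*a)\cup N(\phi_*a')$ is \emph{not} a normal form; the union of two normal forms need not be one, and $d$ depends on the actual multi-curves, not just their homology class. In the paper these two end pieces have $d$-values that are nonzero in general but are shown to be \emph{negatives of each other} (this is the content of Claim~\ref{claim:sign cancel}), so they cancel. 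There is a second issue in the middle piece: $\phi N(a)$ and $\phi N(a')$ typically intersect, so $\Sigma$ and $\Sigma'$ cannot simply be superposed as disjoint OB cobordisms. The paper resolves intersections of $N(a)$ and $N(a')$ chamber by chamber (cases (i)--(iv)) and then modifies the describing arcs (Figure~\ref{fig:resolve2}) to build a genuine OB cobordism $\Sigma^+$ with $d(\Sigma^+)=d(\Sigma)+d(\Sigma')$. None of this bookkeeping is optional.

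For (3), your reasoning that $(T_C)_*=\id$ on $H_1(S,\partial S)$ is wrong: $(T_C)_*a=a+(a\cdot[C])[C]$, which is generally not $a$. The correct observation, which the paper makes, is that since $C$ misses the walls, $T_C$ preserves each chamber, so $T_C(N(a))$ is already in normal form for the class $(T_C)_*a$; hence the trivial product cobordism $T_C(N(a))\to N((T_C)_*a)$ gives $c([T_C],a)=0$. Your statement ``$N(a)=N((T_C)_*a)$'' is false in general and not what is needed.
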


\begin{proof}
First we prove (1).
Let $\phi N(a) \stackrel{\Sigma}{\rightarrow} N(\phi_{*}a)$ and $\phi N(a') \stackrel{\Sigma'}{\rightarrow} N(\phi_{*}a')$ be OB cobordisms. 
We place the surfaces $\Sigma$ and $\Sigma'$ so that 
\begin{itemize}
\item 
$\phi N(a)$ and $\phi N(a')$ in $S_{0}$ have the minimal geometric intersection (i.e., so do $N(a)$ and $N(a')$), and
\item 
$N (\phi_{*}a)$ and $N(\phi_{*}a')$ in $S_{1}$ have the minimal geometric intersection.
\end{itemize}

Let $H\subset S$ be one of the once-punctured torus chambers.  
Then $H \cap N(a)$ and $H\cap N(a')$ are oriented torus links. 
Suppose that $H \cap N(a)$ is a $(p,q)$-torus link and $H\cap N(a')$ is a $(p', q')$-torus link. 
Then $H\cap N(a+a')$ is a $(p+p', q+q')$-torus link.
By isotopy, we arrange the curves $H \cap N(a)$ and $H \cap N(a')$ realizing the minimum geometric intersection, hence in particular, they  transversely intersect.
We resolve all the intersection points as shown in Figure~\ref{fig:resolve}, and call the resulting multi-curve  $A_{H, a, a'}$. 
\begin{figure}[htbp]
\begin{center}
\includegraphics[width=60mm]{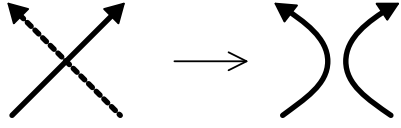}
\caption{Smoothing an intersection.}
\label{fig:resolve}
\end{center}
\end{figure}
Note that $[A_{H, a, a'}] = [H \cap N(a+a')]$ in $H_1(S, \partial S)$. 
We compare curves $A_{H, a, a'}$ and $H\cap N(a+a')$: 
\begin{enumerate}
\item[(i)]
Suppose that $(\sgn (p), \sgn(q))=(-\sgn (p'), -\sgn (q'))$. 
Let $n = \min\{ |p|, |q|, |p'|, |q'| \}$. 
Then $A_{H, a, a'}$ is the disjoint union of $H \cap N(a+a')$, $n$ circles bounding concentric discs oriented counterclockwise, and $n$ circles bounding concentric discs oriented clockwise. 
Removing the circles as shown in Figure~\ref{step12} yields $n$  negative and $n$ positive hyperbolic points. 
Hence we obtain an OB cobordism 
$$H\cap N(a+a') \stackrel{\Sigma_{H, a, a'}}{\longrightarrow} A_{H, a, a'}$$
with $d(\Sigma_{H, a, a'})=n-n=0.$
\item[(ii)]
Suppose that $(\sgn (p), \sgn(q)) \neq (-\sgn (p'), -\sgn (q'))$.  
In this case, we have $A_{H, a, a'} = H \cap N(a+a')$.
Hence we obtain a trivial OB cobordism 
$H\cap N(a+a') \stackrel{\Sigma_{H, a, a'}}{\longrightarrow} A_{H, a, a'}$
with $d(\Sigma_{H, a, a'})=0.$
\end{enumerate}
Next let $H \subset S$ be the $k$-th annulus chamber. 
Recall the properly embedded arc $\rho_k' \subset H$ joining the boundary circles $\gamma_0$ and $\gamma_k$ (cf. Figure~\ref{braid-generator}). 
Due to the definition of normal forms we may suppose that $H\cap N(a)=n \rho_k'$ and $H\cap N(a')=n' \rho_k'$. Then $H\cap N(a+a')= (n+n') \rho_k'$.
\begin{enumerate}
\item[(iii)] 
If $\sgn(n)=\sgn(n')$, let $A_{H, a, a'}:=(H\cap N(a))\sqcup (H\cap N(a')).$ Then $A_{H, a, a'} = H\cap N(a+a')$. 
Again we obtain a trivial OB cobordism 
$H\cap N(a+a') \stackrel{\Sigma_{H, a, a'}}{\longrightarrow} A_{H, a, a'}$
with $d(\Sigma_{H, a, a'})=0.$
\item[(iv)] 
If $\sgn(n)\neq\sgn(n')$, join $N(a)$ and $N(a')$ by describing arcs from the nearest pairs of $\rho_k'$ and $-\rho_k'$ to introduce $m=\min\{|n|, |n'|\}$ many hyperbolic singularities of the same sign $=\e$. 
See Figure~\ref{fig:resolution1}.
Call the resulting set of curves $A_{H, a, a'}$. 
Then $A_{H, a, a'}$ is the disjoint union of $H\cap N(a+a')$ and null-homologous nested arcs. 
This yields an OB cobordism $H\cap N(a+a') \stackrel{\Sigma_{H, a, a'}}{\longrightarrow} A_{H, a, a'}$
with $d(\Sigma_{H, a, a'})=\e m.$
\begin{figure}[htbp]
\begin{center}
\SetLabels
(.05*.6) $H$\\
\endSetLabels
\strut\AffixLabels{\includegraphics*[height=3cm]{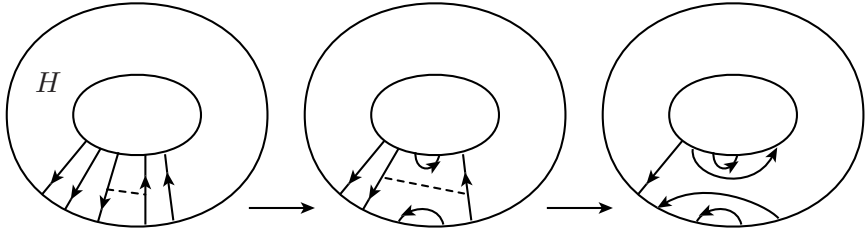}}
\caption{Case (iv). (Left) Curves $N(a) \sqcup N(a')$. (Right) $A_{H, a, a'}$.}
\label{fig:resolution1}
\end{center}
\end{figure}
\end{enumerate}

Let 
$$A_0 = \phi ( \bigsqcup_{H\subset S} A_{H, a, a'}), \qquad 
\Sigma_0 = (\phi \times {\rm id}) (\bigsqcup_{H\subset S} \Sigma_{H, a, a'}),$$ 
where the disjoint union is taken for all the $g+r-1$  chambers $H$ of $S$. 
Now we obtain an OB cobordism 
$$\phi N(a+a') \stackrel{\Sigma_0}{\longrightarrow} A_{0} \ \mbox{ with }\ d(\Sigma_0)= \sum_H d(\Sigma_{H, a, a'}).$$

We repeat the arguments parallel to (i)--(iv) by replacing $a$ by $\phi_*a$ and $a'$ by $\phi_*a'$. 
Namely, for each chamber $H$ we construct a multi-curve $A_{H, \phi_* a, \phi_*a'}$ from $H \cap (N(\phi_*a) \cup N(\phi_*a'))$ and obtain an OB cobordism 
$$A_{H, \phi_* a, \phi_*a'}  \stackrel{\Sigma_{H, \phi_*a, \phi_*a'}}{\longrightarrow} H\cap N(\phi_*(a+a')).$$
Let 
$$A_1 = \bigsqcup_{H\subset S} A_{H, \phi_*a, \phi_*a'}, \qquad 
\Sigma_1 = \bigsqcup_{H\subset S} \Sigma_{H, \phi_*a, \phi_*a'},$$ 
then we obtain an OB cobordism
$$A_1 \stackrel{\Sigma_1}{\longrightarrow} N(\phi_*(a+a'))\ \mbox{ with }\ d(\Sigma_1)= \sum_H d(\Sigma_{H, \phi_*a, \phi_*a'}).$$

\begin{claim}\label{claim:sign cancel}
We have $d(\Sigma_1)=-d(\Sigma_0)$.
\end{claim}

\begin{proof}
For cases (i, ii, iii), we have $d(\Sigma_{H, \phi_*a, \phi_*a'})=0$. 
For case (iv), i.e., $H$ is the $k$-th annulus chamber, since $\phi=$id  near $\partial S$, we have $H\cap N(\phi_*a)=H\cap N(a)= n \rho_k'$ and $H\cap N(\phi_*a')=H\cap N(a')= n' \rho_k'$.
Therefore, the OB cobordism $\Sigma_{H, \phi_*a, \phi_*a'}$ is given by the reverse direction as depicted in Figure~\ref{fig:resolution1}. 
Recalling that $d(\Sigma_{H, a, a'})=\e m$, we have $d(\Sigma_{H, \phi_*a, \phi_*a'})= -\e m$. This concludes the claim. 
\end{proof}

Next we construct an OB cobordism $A_{0} \stackrel{\Sigma^{+}}{\rightarrow} A_{1}$.
Recall that the OB-cobordism surfaces $\Sigma$ and $\Sigma'$ are obtained by sequence of configuration changes (cf. Figure~\ref{configure-change}). 
In general, a describing arc, $\delta$, of a hyperbolic singularity on $\Sigma$ may intersect $\Sigma'$ (or vice versa) as shown in the top left sketch of Figure~\ref{fig:resolve2}, where the black arc (resp. gray arcs) are leaves of $\Sigma$ (resp. $\Sigma'$), the dashed arc is $\delta$, and the dashed arrows indicate positive normal directions of the surfaces. 
\begin{figure}[htbp]
\begin{center}
\SetLabels
\endSetLabels
\strut\AffixLabels{\includegraphics*[width=100mm]
{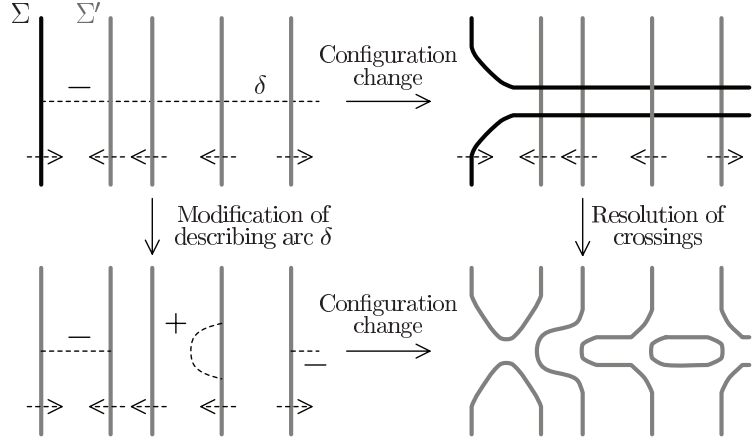}}
\caption{Modification of configuration changes.}
\label{fig:resolve2}
\end{center}
\end{figure}
By isotopy, we make $\delta, \Sigma, \Sigma'$ have no triple intersection points and $\delta$ and $\Sigma'$ attain the minimal geometric intersection.
We project $\delta$ to the diagram of $A_0$ then replace $\delta$ with several describing arcs for $A_0$ as in the vertical left passage in Figure~\ref{fig:resolve2} so that the diagram commutes.   
If the sign of original $\delta$ is $\e$, then the algebraic count of the replacing describing arcs is also $\e$.
This modification of configuration changes yields an OB cobordism $A_{0} \stackrel{\Sigma^{+}}{\rightarrow} A_{1}$.
By the construction of $\Sigma^{+}$, we have $d(\Sigma^{+}) = d(\Sigma) + d(\Sigma')$.

Finally we obtain a sequence of OB cobordisms 
\[ \phi N(a+a') \stackrel{\Sigma_{0}}{\longrightarrow} A_{0}  \stackrel{\Sigma^{+}}{\longrightarrow} A_{1} \stackrel{\Sigma_{1}}{\longrightarrow} N(\phi_{*}(a+a')).  \]
By Claim~\ref{claim:sign cancel}, 
\[ c([\phi],a+a') = d(\Sigma_{0}) + d(\Sigma^{+}) + d(\Sigma_{1}) = d(\Sigma) + d(\Sigma') = c([\phi], a) + c([\phi], a'). \]

We proceed to prove (2). 
Let $\phi N(a) \stackrel{\Sigma}{\rightarrow} N(\phi_{*}(a))$ be an OB cobordism. 
Extend $\psi \in \MCG(S, \partial S)$ to a diffeomorphism $\widetilde{\psi} =\psi \times id: S\times [0,1] \rightarrow S \times [0,1]$ and we obtain an OB cobordism $\psi \phi(N(a))  \stackrel{\widetilde\psi \Sigma}{\rightarrow} \psi( N(\phi_{*}(a)) )$.
Now let us take an  OB cobordism $\psi (N(\phi_{*}(a)) )  \stackrel{\Theta}{\rightarrow} N(\psi_{*} \phi_{*}(a))$.
Gluing $\widetilde\psi\Sigma$ and $\Theta$, we obtain an OB cobordism 
$\psi\phi(N(a))\stackrel{\widetilde\psi\Sigma}{\longrightarrow} \stackrel{\Theta}{\longrightarrow}N(\psi_{*}\phi_{*}(a))$. 
Since $\widetilde\psi$ preserves the signs and the number of hyperbolic singularities, $d(\Sigma) = d(\widetilde\psi\Sigma)$. This yields the desired equation.

To see (3), we observe that if a simple closed curve $C$ does not intersect the walls, then $T_{C}(N(a))$ is in the normal form for any $a \in H_1(S, \partial S)$, i.e., $T_C (N(a))= N(T_C a)$. 
Consider the product $\Sigma = T_{C}(N(a)) \times I$ which yields the trivial OB cobordism $T_C (N(a)) \stackrel{\Sigma}{\rightarrow} N(T_C a)$. Since the foliation is trivial, $c([T_{C}],a)=0$.

Finally, we prove (4). 
We construct an OB cobordism $T_{C}N(a) \stackrel{\Sigma}{\rightarrow} N(a) =N(T_C a)$ with $d(T_{C}N(a), N(a)) = 0$ as follows. 
Since $[C] \cdot [T_{C} N(a)] = [C] \cdot a= 0$,  by applying the configuration changes, described in Figure~\ref{configure-change}, to a portion of the multi-curve $T_{C} N(a)$ that lives in a small collar  neighborhood of $C$, we can modify $T_{C}N(a)$ so that it is disjoint from $C$. 
For example, the left sketch in Figure~\ref{fig:twists}
\begin{figure}[htbp]
\begin{center}
\includegraphics[width=100mm]{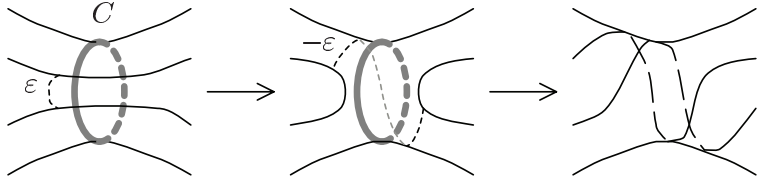}
\caption{Untwisting multi-curve $T_C N(a)$ (left) to obtain $N(a)$ (right).}
\label{fig:twists}
\end{center}
\end{figure}
depicts the case when the geometric intersection number $i(T_{C} N(a), C)=2$, where the thin dashed arcs indicate describing arcs for hyperbolic singularities.
Suppose that the sign of the hyperbolic singularity corresponding to the configuration change is $\e$. 
Next, we add a describing arc of sign $-\e$ to the deformed $T_C N(a)$  (cf. the middle sketch) so that the corresponding configuration change yields the multi-curve $N(a)$ (cf. the right sketch). 
This defines an OB cobordism $T_{C}N(a) \stackrel{\Sigma}{\rightarrow} N(a)$ which satisfies $d(T_{C}N(a), N(a)) = 0$.

When the geometric intersection number is greater than $2$, a similar construction applies. 
Especially the sum of the total algebraic count of the signs in the first operation and the second operation is $0$. 
\end{proof}

\subsection{The function $c$: Planar surface case}\label{sec:planar case}

\quad \\
In this section, we study the function $c$ for the case of $S=S_{0, r}$  a planar surface with $r$ boundary components.
We adopt the same notations as in Section~\ref{sec:sl-formula}.
The next proposition essentially has been proved in \cite{k} by direct analysis of the OB cobordism (though this terminology is not explicitly used). 
Based on the fact that $c$ is a crossed homomorphism we will give  more detailed expression of $c$.

Recall the arcs $\rho_j'$ and loops $\rho_j$ ($j=1, \dots, r-1$) specified in Figure~\ref{braid-generator}. 
Under Poincar\'e duality $H_1(S, \partial S; \Z) \simeq H^1(S; \Z)$; $[\rho_j'] \mapsto \PD [\rho_j']$, we may view $\{ [\rho_j'] \}_{j=1}^{r-1}$ as a basis of $H^1(S)$.
Let $\langle \cdot , \cdot \rangle$ denote the natural pairing of cohomology and homology.
Then we have $\langle [\rho_i'] , [\rho_j] \rangle = [\rho_i'] \cdot [\rho_j] = \delta_{i, j}$ the Kronecker delta.

\begin{proposition}\label{c-planar}
Let $S=S_{0, r}$ be a planar surface with $r$ boundary components. 
For $a \in H_1(S, \partial S;\Z)$ the function $c$ is formulated in the following way:
\begin{equation}\label{eq of c planar}
c([\phi], a)
= \sum_{i=1}^{r-1} \langle [\rho'_{i}], \phi_{*}a-a \rangle - \sum_{j=1}^{r-1} \langle a,[\rho_{j}] \rangle \langle [\rho'_{j}], \phi_{*}[\rho'_{j}]-[\rho'_{j}]\rangle 
\end{equation}
where $\phi_{*}a-a$ and $\phi_{*}[\rho'_{i}]-[\rho'_{i}]$ are regarded as elements of $H_{1}(S;\Z)$. 
Moreover, let $\{t_{i,j}\}_{1\leq i, j \leq r-1}$ be the matrix with $[\rho'_{i}] -\phi_{*}[\rho'_{i}] =\sum_{j=1}^{r-1} t_{i,j} [\rho_j]$ and suppose that $a = \sum_{j=1}^{r-1} x_j [\rho_j']$. 
Then {\rm (\ref{eq of c planar})} can be restated as follows. 
\begin{equation}\label{second formula}
c([\phi], a) = - \sum_{j=1}^{r-1}  x_j  \sum_{1\leq i \leq r-1,\; i\neq j} t_{j,i}
\end{equation}

\end{proposition}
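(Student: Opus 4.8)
The strategy is to use the fact, established in Proposition~\ref{property_of_c}, that $c$ is a crossed homomorphism, so it suffices to verify the formula on the Dehn-twist generators of $\MCG(S_{0,r})$ and then propagate via parts (1) and (2) of that proposition. First I would reduce to evaluating $c([\phi], [\rho_j'])$ for each basis element $[\rho_j'] \in H_1(S, \partial S;\Z)$, since by additivity (Proposition~\ref{property_of_c}(1)) and $a = \sum_j x_j [\rho_j']$ the general formula follows from the case $a = [\rho_j']$. In that case the claimed identity (\ref{second formula}) reads $c([\phi], [\rho_j']) = -\sum_{i \neq j} t_{j,i}$, and I would check this is equivalent to (\ref{eq of c planar}) with $a = [\rho_j']$ by expanding the pairings: $\langle [\rho_i'], \phi_*[\rho_j'] - [\rho_j']\rangle = -t_{j,i}$ by definition of the matrix $\{t_{i,j}\}$, and $\langle [\rho_j'], [\rho_j]\rangle = 1$ while $\langle [\rho_i'], [\rho_j]\rangle = 0$ for $i \neq j$, so the two sums in (\ref{eq of c planar}) become $\sum_i (-t_{j,i})$ and $1 \cdot (-t_{j,j})$ respectively, whose difference is $\sum_i (-t_{j,i}) - (-t_{j,j}) = -\sum_{i\neq j} t_{j,i}$. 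Thus (\ref{eq of c planar}) and (\ref{second formula}) are purely algebraically equivalent, and it remains to prove (\ref{eq of c planar}).

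To prove (\ref{eq of c planar}), I would construct an explicit OB cobordism realizing $c([\phi], a) = d(\phi(N(a)), N(\phi_* a))$ and count its hyperbolic points. The normal form $N(a)$ in a planar surface is a union of parallel copies of the arcs $\rho_k'$ (with multiplicities and signs given by the coordinates $\langle a, [\rho_k]\rangle$), one family in each annulus chamber. Applying $\phi$ produces a multi-curve $\phi(N(a))$ which represents $\phi_* a$ but may cross the walls and may contain $c$-circles. I would run the cobordism of Proposition~\ref{prop existence of OB corbordism}: push $\phi(N(a))$ across each wall via configuration changes (each contributing a signed hyperbolic point), then remove null-homologous $c$-circles by Steps 1--3. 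The bookkeeping splits into two contributions: one coming from the ``net'' winding of $\phi(N(a))$ relative to $N(\phi_*a)$ across the walls, which assembles into the first sum $\sum_i \langle [\rho_i'], \phi_* a - a\rangle$; and one coming from the interaction of the arcs of $N(a)$ with one another inside each chamber under $\phi$ — when $\phi$ drags the $\langle a, [\rho_j]\rangle$ copies of $\rho_j'$ around, they must cross the images $\phi(\rho_j')$, and each such crossing is accounted by the factor $\langle [\rho_j'], \phi_*[\rho_j'] - [\rho_j']\rangle$, weighted by the multiplicity $\langle a, [\rho_j]\rangle$, giving the subtracted sum. Because the paper says this was essentially done in~\cite{k}, I would either cite that analysis or re-derive it, but the crossed-homomorphism structure lets me restrict to $a = [\rho_j']$, where the count is cleanest: then $N(a) = \rho_j'$ is a single arc and only the $j$-th chamber is involved nontrivially.

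The main obstacle I anticipate is the sign and crossing-count bookkeeping in the ``self-interaction'' term: making precise why, when $\phi$ is applied to the $\langle a,[\rho_j]\rangle$-fold parallel arc in the $j$-th chamber, the number of hyperbolic points created in straightening it back to normal form is exactly $\langle a,[\rho_j]\rangle \cdot t_{j,j}$ off from the naive count, i.e., the ``diagonal'' term $t_{j,j}$ is the one that must be excluded in (\ref{second formula}). This is a quadratic effect in the multiplicities (reminiscent of the $\binom{m}{2}$-type counts in Bennequin-type formulas) and requires carefully tracking which pairs of parallel strands cross during the isotopy. I would handle it by first doing the single-arc case $a=[\rho_j']$ rigorously — where there is no self-interaction and one checks directly $c([\phi],[\rho_j']) = \langle\sum_i [\rho_i'], \phi_*[\rho_j'] - [\rho_j']\rangle - \langle [\rho_j'],\phi_*[\rho_j']-[\rho_j']\rangle$ by counting wall crossings — and then obtaining the general formula from bilinearity together with Proposition~\ref{property_of_c}(1), and from (2) to reduce $\phi$ to a product of the standard Dehn twists, for each of which parts (3) and (4) pin down $c$ explicitly and the matrix $\{t_{i,j}\}$ is immediate.
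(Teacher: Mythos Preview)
Your proposal is correct and follows essentially the same approach as the paper: reduce to the basis elements $a=[\rho_j']$ via the additivity in Proposition~\ref{property_of_c}(1), compute $c([\phi],[\rho_j'])$ directly by counting hyperbolic points in the OB cobordism $\phi(\rho_j')\to N(\phi_*[\rho_j'])$ (the paper does this by writing $\phi$ as a product of the standard Dehn twists from \cite{k}, exactly as you suggest at the end), and then expand algebraically to obtain both (\ref{eq of c planar}) and (\ref{second formula}). Your worry about a ``quadratic self-interaction'' term is unnecessary---linearity in $a$ (which you already invoke) eliminates it entirely, so the single-arc computation is all that is needed.
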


\begin{remark}\label{rm:planar}
For the planar case, Proposition \ref{c-planar} shows that the crossed homomorphism $\mathcal C$ or $c([\phi], -)$ is completely determined by the map 
$\phi_{*}-id: H_1(S, \partial S) \rightarrow H_1(S)$.
\end{remark}

\begin{proof}
For $j=1, \ldots, r-1$, we have
\begin{eqnarray}\label{j'}
c([\phi], [\rho_j ' ]) = \sum_{1\leq i \leq r-1, \ i\neq j} \left\langle [\rho_i'],  \phi_* [\rho_j'] - [\rho_j']  \right\rangle
\end{eqnarray}
for the following reasons.
We recall that $c([\phi], [\rho_j'])$ counts algebraically the hyperbolic singularities produced by the configuration changes (cf. Figure~\ref{configure-change}) of the multi-curve $\phi(\rho_j')$ where it crosses the walls. 
We write $\phi$ as a product of special type of Dehn twists that are used in \cite{k} and denoted by $A_{k, l}, A_m$ there. 
We observe that a $\pm$ Dehn twist that involves the $i$-th and $j$-th binding components ($i\neq j$) contributes $\pm 1$ hyperbolic singularity for the OB cobordism $\phi(\rho_j') \stackrel{\Sigma}{\rightarrow} N(\phi_*[\rho_j'])$.
But a Dehn twist around a single binding component $\gamma_k$, where $(k=1, \ldots, r-1),$ does not contribute any hyperbolic singularity to the OB cobordism. 
Since the quantity $\langle [\rho_i'], \phi_*[\rho_j'] - [\rho_j'] \rangle$ counts algebraically the number of circles in $N(\phi_*[\rho_j'])$ around the binding $\gamma_i$, equation (\ref{j'}) follows.

Recall that $\{[\rho_{i}] \in H_1(S)\}_{i=1}^{r-1}$ is the dual basis of $\{[\rho'_{i}] \in H_1(S, \partial S)\}_{i=1}^{r-1}$. We may express $a \in H_1(S, \partial S) \simeq H^{1}(S)$ as:
$
a = \sum_j \langle a, [\rho_{j}] \rangle [\rho_j'].
$
By the crossed homomorphism property of the function $c$ (Proposition~\ref{property_of_c}), we can deduce (\ref{eq of c planar}) as follows:
\begin{eqnarray*}
c([\phi], a) 
&=&
\sum_j  \langle  a, [\rho_{j}] \rangle c([\phi], [\rho_j']) \\
&\stackrel{(\ref{j'})}{=}& 
\sum_{j} \sum_{i\neq j} 
\langle [\rho'_{i}], \langle a,[\rho_{j}]\rangle (\phi_{*} [\rho'_{j}] -[\rho'_{j}]) \rangle \\
& = & \sum_{i} \langle [\rho'_{i}], \phi_{*}a -a \rangle 
- \sum_{j} \langle a,[\rho_{j}] \rangle \langle [\rho'_{j}],\phi_{*}[\rho'_{j}]-[\rho'_{j}] \rangle.
\end{eqnarray*}

Now plugging the relation $[\rho'_j] -\phi_*[\rho'_j] =\sum_{i=1}^{r-1} t_{j, i} [\rho_i]$ to (\ref{j'}) we obtain 
\begin{equation}
c([\phi], [\rho_j']) = - \sum_{1\leq i \leq r-1,\; i\neq j} t_{j,i}. \label{jj'}
\end{equation}
Linearly extending (\ref{jj'}) to an arbitrary element $a =\sum_j x_j [\rho_j']$ we obtain (\ref{second formula}). 
\end{proof}

\begin{remark}
Since $\phi_*[\rho_j']=[\rho_j'] \in H_{1}(S,\partial S)$, we have $c([\psi], \phi_*[\rho_j'])= c([\psi], [\rho_j'])$. 
Therefore, when $S$ is planar the property (2) in Proposition~\ref{property_of_c} can be restated as
$$c([\psi \phi], [\rho_j']) = c([\phi], [\rho_j']) + c([\psi], [\rho_j']).$$
\end{remark}

By using Theorem \ref{theorem:sl-formula} and Proposition \ref{c-planar}, now we can deduce the self-linking number formulae in \cite{kp}, \cite{k}.
Let $a_{\sigma}$ (resp. $a_{\rho_j}$) be the exponent sum of the the braid generators $\{\sigma_{i}\}_{i=1}^{n-1}$ (resp. $\rho_j$) in the braid word $b=b_1^{\e_1} b_2^{\e_2} \cdots b_l^{\e_l}$.
Let $a = \sum_{i=1}^{r-1} s_{i}[\rho'_{i}] \in H_1(S, \partial S)$, the homology class introduced in the proof of Claim~\ref{claim:a} such that $[b] = a -\phi_{*}a$.

\begin{corollary}[The self-linking number formula for planar open books \cite{k}]\label{cor:sl-planar}
With the notations above, the self-linking number is given by the following formula. 
\begin{eqnarray*}
sl(\hat{b}, [\Sigma])
& = & -n + a_{\sigma} + \sum_{j=2}^{r} a_{\rho_{j}}(1-s_{j}) - \sum_{j=1}^{r-1} s_{j} \sum_{1\leq i \leq r-1 , \; i \neq j} t_{j,i}  
\end{eqnarray*}

\end{corollary}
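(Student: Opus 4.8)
The plan is to derive the formula directly from the general self-linking formula (Theorem~\ref{theorem:sl-formula}) and the planar evaluation of the function $c$ (Proposition~\ref{c-planar}), simply collecting terms and exploiting two special features of a planar page. By Theorem~\ref{theorem:sl-formula},
$sl(\hat b,[\Sigma]) = -n + \widehat{\exp}(b) - \phi_*(a)\cdot[b] + c([\phi],a)$,
and since the coefficients $s_i$ of $a=\sum_{i=1}^{r-1}s_i[\rho_i']$ are exactly the coefficients $x_i$ appearing in Proposition~\ref{c-planar}, equation~(\ref{second formula}) gives $c([\phi],a) = -\sum_{j=1}^{r-1}s_j\sum_{i\neq j}t_{j,i}$. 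This already produces the last sum of the claimed identity, so it remains to show that $\widehat{\exp}(b)-\phi_*(a)\cdot[b] = a_\sigma + \sum_j a_{\rho_j}(1-s_j)$.

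First I would evaluate $\widehat{\exp}(b)$. Its quadratic correction $\sum_{j<i}\e_i\e_j[b_j]\cdot[b_i]$ uses the intersection pairing on $H_1(S;\Z)$ for $S=S_{0,r}$. One has $[\sigma_k]=0$, and the loops $\rho_1,\dots,\rho_{r-1}$ can be isotoped to be pairwise disjoint boundary-parallel curves; equivalently, for a planar surface the map $H_1(S)\to H_1(S,\partial S)$ is zero, so the pairing $H_1(S)\times H_1(S)\to\Z$ vanishes identically. Hence every $[b_j]\cdot[b_i]=0$ and $\widehat{\exp}(b)=\sum_i\e_i = a_\sigma + \sum_j a_{\rho_j}$, the total exponent sum split according to $\sigma$- and $\rho$-generators.

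Next I would compute $\phi_*(a)\cdot[b]$. In $H_1(S;\Z)$ we have $[b]=\sum_i\e_i[b_i]=\sum_j a_{\rho_j}[\rho_j]$. For a planar surface $\phi$ acts trivially on $H_1(S,\partial S)$: since $\phi$ fixes $\partial S$ pointwise and, in the long exact sequence of the pair $(S,\partial S)$, the boundary map $H_1(S,\partial S)\to H_0(\partial S)$ is injective when $S$ is planar, $\phi_*$ must be the identity. Thus $\phi_*(a)=a=\sum_i s_i[\rho_i']$ in $H_1(S,\partial S)$, and using $[\rho_i']\cdot[\rho_j]=\delta_{i,j}$ we get $\phi_*(a)\cdot[b]=\sum_j s_j a_{\rho_j}$. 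Substituting, $\widehat{\exp}(b)-\phi_*(a)\cdot[b]=a_\sigma+\sum_j a_{\rho_j}(1-s_j)$, and adding the value of $c([\phi],a)$ computed above gives the corollary.

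There is no serious obstacle here; the computation is essentially bookkeeping. The only points that need a sentence of justification are the two planar-surface facts used above — the vanishing of the intersection form on $H_1(S_{0,r})$ and the triviality of the $\MCG$-action on $H_1(S_{0,r},\partial S)$ — together with lining up the index conventions consistently, since the loops and boundary components in this corollary are labeled $\rho_2,\dots,\rho_r$ and $\gamma_1,\dots,\gamma_r$ as in \cite{k}, which corresponds to the present paper's $\rho_1,\dots,\rho_{r-1}$ and $\gamma_0,\dots,\gamma_{r-1}$ after a shift of index by one.
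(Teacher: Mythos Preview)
Your proof is correct and follows essentially the same route as the paper: apply Theorem~\ref{theorem:sl-formula}, use Proposition~\ref{c-planar} for the $c$-term, and reduce $\widehat{\exp}(b)$ and $\phi_*(a)\cdot[b]$ using the planar vanishing $[b_i]\cdot[b_j]=0$ together with $[\rho_i']\cdot[\rho_j]=\delta_{i,j}$. The only cosmetic difference is in the middle term: the paper writes $\phi_*(a)\cdot[b]=(a-[b])\cdot[b]=a\cdot[b]$ using the defining relation $[b]=a-\phi_*(a)$ and the same vanishing, whereas you argue directly that $\phi_*=\mathrm{id}$ on $H_1(S_{0,r},\partial S)$ via the long exact sequence of the pair; both lead to $\sum_j s_j a_{\rho_j}$ and the remaining bookkeeping is identical.
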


\begin{proof}

Since $[b_{i}] \cdot [b_{j}]=0$ for all $b_{i}, b_{j}\in\{\rho_1, \ldots, \rho_{r-1}, \sigma_1, \ldots, \sigma_{n-1}\}$, we have 
$$\widehat{\exp}(b) =  \sum_{i=1}^l \e_i= a_{\sigma} + \sum_{j=1}^{r-1}a_{\rho_j},$$
and since $[\rho_j']\cdot[\rho_k] =\delta_{j,k}$ we have 
\[ \phi_{*}(a)\cdot[b]  = (a - [b])\cdot[b]   
= (\sum_{j=1}^{r-1} s_j [\rho_j'])\cdot (\sum_{i=1}^l \e_i [b_i] ) 
= \sum_{j=1}^{r-1} a_{\rho_{j}}s_{j}. \]
Hence by Theorem \ref{theorem:sl-formula} and Proposition \ref{c-planar}  we have:
\begin{eqnarray*}
sl(\hat{b},\Sigma) 
& = & -n + \widehat{\exp}(b) - \phi_*(a)\cdot[b] + c([\phi],a)\\
& = & -n + a_{\sigma} + \sum_{j=1}^{r-1} a_{\rho_{j}}(1-s_{j}) - \sum_{j=1}^{r-1} s_{j} \sum_{1\leq i \leq r-1 , \; i \neq j} t_{j,i}.   
\end{eqnarray*}
\end{proof}

\subsection{The function $c$: Surface with connected boundary}\label{sec:surface-one-boundary}

\quad\\
Let $S=S_{g, 1}$ be a genus $g$ surface with one boundary component.
When $g=1$, since there is no wall Proposition~\ref{property_of_c}-(3) implies that $c([\phi],a)=0$ for all $[\phi] \in \MCG(S_{1, 1})$ and $a \in H_1(S, \partial S)$.
Henceforth in this section we restrict our attention to the case $g\geq 2$.

We observe in the following example that, unlike the planar case discussed in Remark~\ref{rm:planar}, the function $c([\phi],-)$ is no longer completely determined by the action of $\phi_*$ on homologies.  
In fact we see in Proposition~\ref{c-genus} that $c([\phi],-)$ carries more delicate information of $[\phi] \in \MCG(S)$.

\begin{example}\label{example C and C'}
Let us take simple closed curves $C, C'$ and $\rho$ as in Figure~\ref{example_twist}.
\begin{figure}[htbp]
\begin{center}
\SetLabels
(.7*.2) $C$\\
(.8*1) $C'$\\
(.85*.5) $\rho$\\
(.6*.05) wall\\
\endSetLabels
\strut\AffixLabels{\includegraphics*[width=8cm]{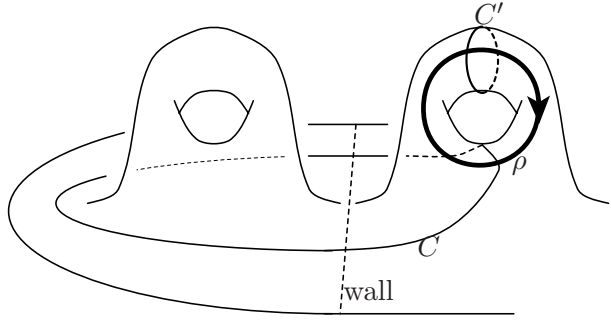}}
\caption{Curves $C, C'$ and $\rho$ in Example~\ref{example C and C'}.}
\label{example_twist}
\end{center}
\end{figure}
Since $C$ and $C'$ cobound a subsurface, $T_{C}$ and $T_{C'}$ induce the same action on the homology groups $H_{1}(S;\Z)$ and $H_{1}(S,\partial S;\Z)$. 
As shown in Figure \ref{fig:example_change}, we modify the curve $T_{C}(\rho)$ into the normal form $N(T_{C}(\rho))$ by introducing three positive hyperbolic singularities and one negative hyperbolic singularity. 
Hence $c([T_{C}],[\rho]) = 3-1= 2$.
On the other hand, $C'$ does not intersect the walls, so by Proposition~\ref{property_of_c}-(3) we get $c([T_{C'}], [\rho])=0$.
\begin{figure}[htbp]
\begin{center}
\SetLabels
(0.41*0.6)   $T_{C}(\rho)$\\
(0.22*0.65)   $+$\\
(0.63*0.7)   $+$\\
(0.13*0.32)  $+$\\
(0.77*0.01)  $-$\\
\endSetLabels
\strut\AffixLabels{\includegraphics*[scale=0.5, width=120mm]{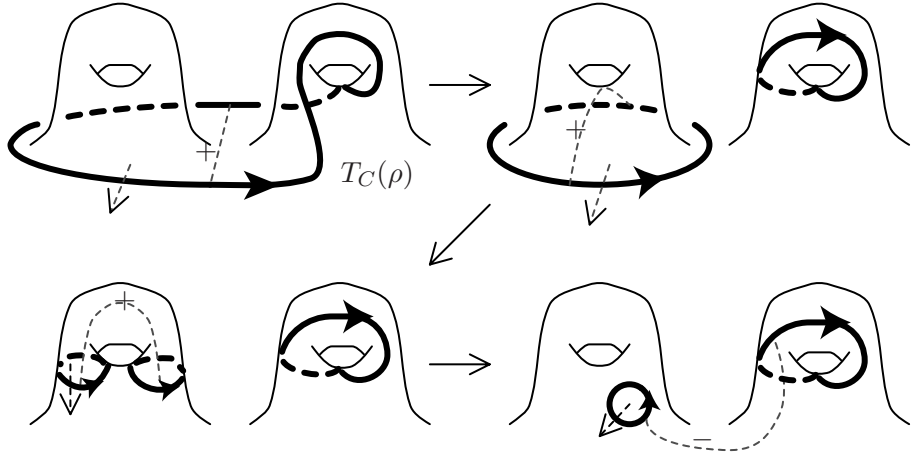}}
 \caption{Configuration change of $T_{C}(\rho)$ to the normal form $N(T_C (\rho))$.}
  \label{fig:example_change}
\end{center}
\end{figure}

\end{example}

In this section we use the following notations: 
Recall the circles $\rho_j, \rho_j' \subset S$ defined in Section~\ref{sec:sl-formula}. 
To distinguish elements of $H_{1}(S;\Z)$ and $H_{1}(S,\partial S;\Z) \cong H^{1}(S;\Z)$, we use the symbol $[\rho_{j}]$ ($j=1, \ldots, 2g$) to express the homology class of $H_{1}(S)$ represented by the circle $\rho_{j}$, and the symbol $[\rho'_{j}]$ for the relative homology class of $H_{1}(S, \partial S) \cong H^1(S)$ represented by the circle $\rho'_{j}$. 
Note that since $S$ has connected boundary, $\rho_{j} = \rho'_{j}$ as a set for all $j$ and as a group $H_{1}(S;\Z) \cong H_{1}(S,\partial S;\Z) \cong \Z^{2g}$.

Let $\langle\;,\;\rangle : H_{1}(S,\partial S ;\Z) \times H_{1}(S;\Z) \to \Z$ denote the natural pairing of cohomology and homology, or the intersection pairing, i.e., $\langle \PD[\rho_j'], [\rho_k]\rangle = [\rho_j'] \cdot [\rho_k]$. 
For simplicity, we denote $\PD[\rho_j']$ by $[\rho_j']$ in the following.  
We have:
$$\langle [\rho_j'], [\rho_k]\rangle = \left\{
\begin{array}{ll}
1 & \mbox{ if } (j, k)=(2i-1, 2i), \\
-1 & \mbox{ if } (j, k)=(2i, 2i-1), \\
0 & \mbox{ otherwise. }
\end{array} \right. 
$$

Let $\Gamma_1 :=\pi_{1}(S)$ the fundamental group, $\Gamma_2:=[\Gamma_1, \Gamma_1]$ the commutator subgroup, and $\Gamma_{k+1} := [\Gamma_k, \Gamma_1]$, namely $\{\Gamma_k\}_{k\geq 1}$ is the the lower central series of $\Gamma_1$.  
Then the natural action of $\MCG(S_{g, 1})$ on $\Gamma_1$ induces the $k$-th {\em Johnson-Morita representation} \cite[p.199]{mo3}
\[ 
\varrho_k: \MCG( S_{g,1}) \rightarrow \textrm{Aut}(\Gamma_1 \slash \Gamma_k), \quad k\geq 2.
\]
Let $\M(k):=\ker \rho_k$ and $H:=H_1(S; \Z)$.  
Morita generalizes the Johnson homomorphism $\tau_2: \M(2) \to \Hom(H, \Gamma_2/ \Gamma_3)$ to the $k$-th {\em Johnson-Morita homomorphism} \cite[p.201]{mo3}: 
$$
\tau_k: \M(k) \to \Hom(H, \Gamma_k/\Gamma_{k+1})
$$
with $\ker\tau_k = \M(k+1)$. 
Let $\mK$ be the subgroup of $\MCG(S_{g,1})$ generated by the Dehn twists about separating simple closed curves in $S$.
Johnson proves in \cite{j} that for $g\geq 3$ we can identify $\ker\tau_2 = \mK$. 
Recall that by Proposition~\ref{property_of_c}-(2, 4) our crossed homomorphism $\mathcal C: \MCG(S)\to H^1(S;\Z)$ also vanishes on $\mathcal K$. 
Hence it is natural to expect that the map $\mathcal C$ is related to $\tau_2$. 

Associated to the representation $\varrho_3: \MCG(S_{g,1}) \to \Aut(\Gamma_1/\Gamma_3)$ 
Morita \cite{mo3} finds the embedding $\MCG(S_{g,1})/\M(3) \subset \frac{1}{2} \wedge^3 H \rtimes Sp(H)$ as a finite index subgroup and the crossed homomorphism $\tilde k: \MCG(S_{g,1}) \to \frac{1}{2} \wedge^3 H$, which is the unique (modulo coboundaries for $H^1(\MCG(S_{g,1}), \wedge^3 H)$) extension of $\tau_2$. 
For our purpose we are interested in the composition 
\[ k=C \circ \tilde k: \MCG(S_{g,1}) \rightarrow H^{1}(S;\Z)\]
where $C: \frac{1}{2} \wedge^3 H \to H$ is the contraction defined by $C(x \wedge y \wedge z) = 2[ (x \cdot y) z + (y \cdot z) x + (z \cdot x) y]$. 
The associated map, which we denote by the same letter, $k:\MCG(S_{g,1})\times H_{1}(S) \rightarrow \Z$ given by $k(\phi,a) = k(\phi)(a)$ is a crossed homomorphism.
Since $k$ is a generator of the cohomology group $H^{1}(\MCG(S_{g,1}) ; H) \cong \Z$ \cite[Rem 4.9]{mo3}, it is natural to expect that $k$ appears in the description of $c(\phi,a)$.

Below we fix conventions and define the crossed homomorphism $k$ following Morita's \cite[\S 6]{mo} that is based on combinatorial group theory.

\begin{definition}
Let $F_{2}$ be the free group of rank two with generators $\alpha$ and $\beta$.
Any element of $F_2$ is uniquely written in the form $\alpha^{\epsilon_{1}} \beta^{\delta_{1}} \cdots \alpha^{\epsilon_{n}} \beta^{\delta_{n}},$ where $\epsilon_i, \delta_i \in \{-1, 0, 1\}$. 
With this expression, we define a function $d: F_2 \to \Z$ by 
\[ d(\alpha^{\epsilon_{1}} \beta^{\delta_{1}} \cdots \alpha^{\epsilon_{n}} \beta^{\delta_{n}}) = \sum_{i=1}^n \delta_i \sum_{j=1}^i \epsilon_j.  \]
Let $\alpha_i, \beta_i$ ($i=1, \ldots, g$) be generating curves of $\pi_1(S)$ as in Figure~\ref{generator-Morita}.
\begin{figure}[htbp]
\begin{center}
\SetLabels
(.15*.3) $\alpha_1$\\
(.37*.3) $\alpha_2$\\
(.65*.25) $\alpha_g$\\
(.2*.5) $\beta_1$\\
(.5*.3) $\beta_2$\\ 
(.85*.2) $\beta_g$\\
\endSetLabels
\strut\AffixLabels{\includegraphics*[width=10cm]{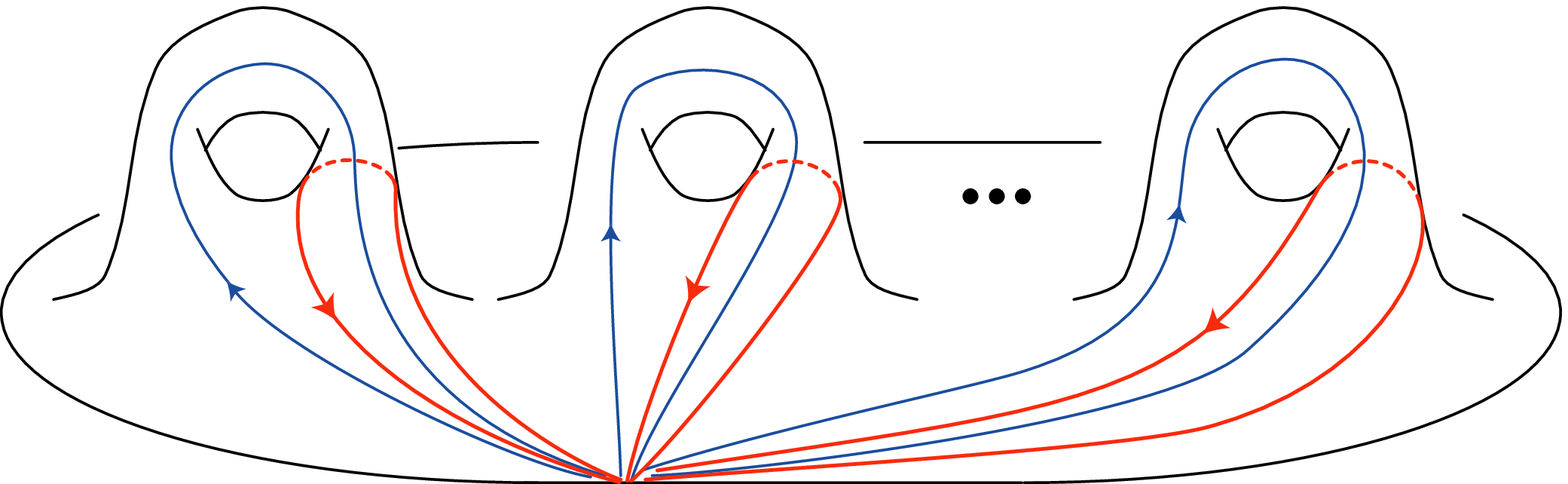}}
\caption{}
\label{generator-Morita}
\end{center}
\end{figure}
Let $p_{i}: \pi_{1}(S) \rightarrow F_2$ be a homomorphism defined by 
\[ 
p_{i}(\gamma) = \left\{ 
\begin{array}{ll}
 \alpha & \;\;\; \mbox{ if } \gamma = \alpha_i, \\
 \beta & \;\;\; \mbox{ if }  \gamma = \beta_i,\\
 1 & \;\;\; \mbox{ otherwise}.
\end{array} 
\right.
\]
Finally we define a map $k: \MCG(S_{g, 1}) \times H_{1}(S,\partial S) \rightarrow \Z$ by
\[ k([\phi], a) = \sum_{i=1}^{g} d(p_{i}(\phi_{*}{\tt a})) - d(p_{i}({\tt a})) \]
where ${\tt a} \in \pi_{1}(S)$ represents $a \in H_1(S, \partial S)$.
Morita proves in \cite[Lemma 6.3]{mo} that $k([\phi], a)$ is a crossed homomorphism.
\end{definition}

We give an explicit formula of the function $c$ by using $k$. 
It provides a new geometric meaning of the classically known crossed homomorphism $k$: the signed count of the saddle points in an OB cobordism.

\begin{proposition}\label{c-genus}
If $S=S_{g,1}$ has connected boundary and $g\geq 2$, then the function $c$ is expressed as
\begin{eqnarray}\label{formulac}
c([\phi],a) &=&
-2 k([\phi],a) + \sum_{i=1}^{g} 
\langle [\rho_{2i-1}'] - [\rho_{2i}'], \ \phi_* a-a \rangle \\
&& - \sum_{i=1}^g 
\langle a, [\rho_{2i-1}] \rangle 
\langle [\rho_{2i}'], \ \phi_*[\rho_{2i}']-[\rho_{2i}'] \rangle \notag \\
&& -  \sum_{i=1}^g 
\langle a, [\rho_{2i}] \rangle 
\langle [\rho_{2i-1}'], \ \phi_*[\rho_{2i-1}']-[\rho_{2i-1}'] \rangle \notag
\end{eqnarray}
where $\phi_{*}a-a$ and $\phi_{*}[\rho_{j}']-[\rho'_{j}]$ are regarded as  elements of $H_{1}(S; \Z)$.
\end{proposition}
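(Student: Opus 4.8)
The plan is to reduce the identity (\ref{formulac}) to a twist-by-twist verification on a generating system of $\MCG(S_{g,1})$, in the spirit of the proof of Proposition~\ref{c-planar}. Both sides of (\ref{formulac}) are $\Z$-linear in $a$: for $c$ this is Proposition~\ref{property_of_c}(1), and for the right-hand side it follows from the bilinearity of the intersection pairing together with the linearity of $a\mapsto k([\phi],a)$. So it suffices to treat the basis classes $a=[\rho_j']$, $j=1,\dots,2g$. Both sides are also crossed homomorphisms in $[\phi]$ obeying the same cocycle rule $f([\psi\phi],a)=f([\phi],a)+f([\psi],\phi_*a)$: for $c$ this is Proposition~\ref{property_of_c}(2); for $k$ it is immediate from the defining formula $k([\phi],a)=\sum_i(d(p_i(\phi_*{\tt a}))-d(p_i({\tt a})))$ once one represents $\phi_* a$ by $\phi_*{\tt a}$; and the remaining (homological) terms obey the same rule by the elementary computation already carried out in the planar case. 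Hence the difference of the two sides is again such a crossed homomorphism, and it is enough to check (\ref{formulac}) when $\phi=T_\gamma$ is a Dehn twist about each curve $\gamma$ of a fixed generating set (e.g. the Humphries generators), chosen adapted to the chamber decomposition.

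If $\gamma$ is disjoint from all the walls, Proposition~\ref{property_of_c}(3) gives $c([T_\gamma],a)=0$, so what remains is to show the right-hand side of (\ref{formulac}) vanishes. Here $(T_\gamma)_*$ acts on homology by a transvection supported in the once-punctured-torus chamber containing $\gamma$, so only a single index $i$ contributes and the claim reduces to the numerical identity $2k([T_\gamma],a)=(\mbox{homological correction})$. I would verify this from Morita's combinatorial definition of $k$: $p_{i'}((T_\gamma)_*{\tt a})=p_{i'}({\tt a})$ for $i'$ outside the chamber of $\gamma$, while for the relevant index $p_i((T_\gamma)_*{\tt a})$ is obtained from $p_i({\tt a})$ by the Picard--Lefschetz substitution, so the change of $d$ can be evaluated syllable by syllable and matched against the intersection-number expression.

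If $\gamma$ meets the walls, I would instead construct an explicit OB cobordism $\phi(N(a))\stackrel{\Sigma}{\rightarrow}N(\phi_* a)$ following the proof of Proposition~\ref{prop existence of OB corbordism}: first homotope $\phi(N(a))$ off the walls by the configuration changes of Figure~\ref{configure-change}, then remove the resulting null-homologous $c$-circles chamber by chamber via Steps~1--3, and count the signed hyperbolic points. As in the planar proof of Proposition~\ref{c-planar}, the configuration changes at the walls and the Step~1--2 cleanup produce the homological terms of (\ref{formulac}) --- the term $\sum_i\langle[\rho_{2i-1}']-[\rho_{2i}'],\phi_*a-a\rangle$ and the correction terms $\langle a,[\rho]\rangle\langle[\rho'],(\phi_*-\id)[\rho']\rangle$ coming from curves that wind inside a handle before hitting a wall --- whereas the Step~3 cleanup inside the $g$ once-punctured-torus chambers produces additional saddles whose signed total must equal $-2k([\phi],a)=-2\sum_i(d(p_i(\phi_*{\tt a}))-d(p_i({\tt a})))$.

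The main obstacle is exactly this last matching: recognizing that the normal-form reduction of a curve inside a once-punctured-torus chamber computes (twice) the Johnson--Morita function $d$, i.e. that the syllable structure of the word $p_i(\phi_*{\tt a})\in F_2$ is precisely the bookkeeping of the describing arcs inserted in Step~3 of that chamber, each $\alpha^{\pm}\beta^{\pm}$-syllable contributing one saddle, with the overall factor $2$ reflecting the two ends of the reduction. The normalization constant $-2$ itself can be pinned down independently by evaluating on a single mapping class --- for instance the bounding-pair map $T_C T_{C'}^{-1}$ of Example~\ref{example C and C'}, for which $c$ is computed there and $k$ is readily calculated. A secondary technical point, already encountered in the proof of Proposition~\ref{property_of_c}(1), is that OB cobordisms built independently in neighbouring chambers must be made compatible along the walls; this is handled by the describing-arc modification of Figure~\ref{fig:resolve2}, which does not change the signed saddle count. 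Once the twist-by-twist check is complete, the linearity and crossed-homomorphism reductions of the first paragraph assemble it into the general formula (\ref{formulac}).
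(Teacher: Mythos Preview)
Your reduction --- linearity in $a$ together with the crossed-homomorphism cocycle identity in $[\phi]$, hence a check on a Dehn-twist generating set --- is exactly the paper's strategy. The divergence is entirely in how the generator verification is carried out. The paper uses the Lickorish curves $A_i,B_i,C_j$ and simply computes both sides of (\ref{formulac}) numerically for each pair $(T_D,[\rho_j'])$ with $D\cap\rho_j\neq\emptyset$. After observing that disjoint pairs give $0=0$, only four cases survive --- $(T_{A_i},[\rho_{2i}'])$, $(T_{B_i},[\rho_{2i-1}'])$, $(T_{C_i},[\rho_{2i}'])$, $(T_{C_i},[\rho_{2i+2}'])$ --- and in each one $c$, $k$, and the two homological sums are evaluated to small integers and the identity is checked by arithmetic. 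No attempt is made to explain why the formula has the shape it does.

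Your plan is more ambitious: you want to read off each term of (\ref{formulac}) from a specific stage of the OB-cobordism construction --- wall-crossings producing the linear homological pieces, Step~3 inside each torus chamber producing $-2k$. That would be a more illuminating proof if it worked, but you yourself flag the key step --- that the Step~3 normal-form reduction inside a once-punctured torus computes twice Morita's $d$ on the projected word --- as an unresolved ``main obstacle'', and neither your sketch nor the paper supplies any mechanism for this matching. In practice the cheapest way to close that gap is precisely the four-case numerical check the paper performs, so your route collapses to theirs anyway.

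One smaller correction: the homological correction terms are \emph{not} crossed homomorphisms term by term (in a summand like $\langle a,[\rho_{2i-1}]\rangle\,\langle[\rho_{2i}'],\phi_*[\rho_{2i}']-[\rho_{2i}']\rangle$ the first factor changes under $a\mapsto\phi_*a$ while the second involves $\psi_*$ acting on the fixed class $[\rho_{2i}']$, and these do not match), so the appeal to ``the elementary computation already carried out in the planar case'' does not suffice. The reduction to generators is nonetheless valid, since $c$ and $k$ are both crossed homomorphisms and hence so is their difference from the full right-hand side; but this should be argued as such rather than by analogy.
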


\begin{proof}

Recall that the left hand side of (\ref{formulac}) satisfies the crossed homomorphism properties (1), (2) in Proposition~\ref{property_of_c}.
Hence it is sufficient to verify (\ref{formulac}) for a generating set of the mapping class group $\MCG(S_{g, 1})$.

We use the {\em Lickorish generators} of $\MCG(S_{g,1})$. 
Let $A_{i},B_{i}$ $(i=1,\ldots,g)$ and $C_{j}$ $(j=1,\ldots,g-1)$ be simple closed curves as shown in Figure~\ref{fig:generator}.
\begin{figure}[htbp]
\begin{center}
\SetLabels
(0.82*1.0)  $A_{1}$\\
(0.88*0.38)  $B_{1}$\\
(0.75*0.13)  $C_{1}$\\
(0.55*1.0)  $A_{2}$\\
(0.6*0.38)  $B_{2}$\\
(0.5*0.18)  $C_{2}$\\
(0.2*1.0)  $A_{g}$\\
(0.15*0.38)  $B_{g}$\\
(0.25*0.18)  $C_{g-1}$\\
\endSetLabels
\strut\AffixLabels{\includegraphics*[scale=0.5, width=110mm]{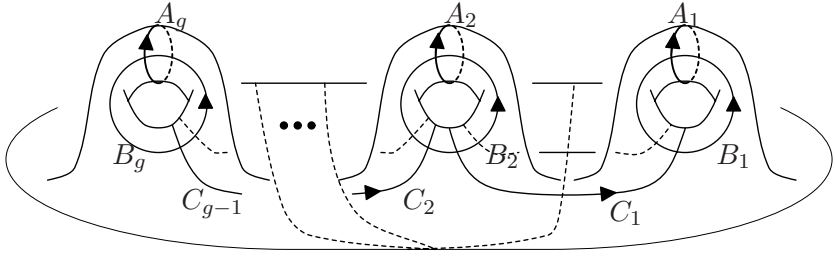}}
\caption{Generating curves for $\MCG(S_{g, 1})$.}\label{fig:generator}
\end{center} 
\end{figure}
Lickorish proved that the Dehn twists along these $3g-1$ curves generate $\MCG(S_{g, 1})$.
With the orientations indicated in Figure~\ref{fig:generator}, we have in $H_{1}(S;\Z)$ that 
$$[A_{i}] = [\rho_{2i-1}], \ [B_{i}]=[\rho_{2i}], \mbox{ and }[C_{i}] = -[\rho_{2i-1}]+[\rho_{2i+1}].$$

If $D \in \{A_{i},B_{i},C_{i}\}$ is disjoint from the loop $\rho_{j}$, then $c([T_{D}], [\rho'_{j}])= k([T_{D}], [\rho'_{j}]) = 0$ and ${T_{D}}_*[\rho'_{j}]-[\rho'_{j}] = 0$, thus the formula (\ref{formulac})  holds.
So we only need to consider the case where $D$ has non-trivial intersection with $\rho_{j}$. There are four cases to study:\\

{\bf Case I:} $(\phi, a) = (T_{A_i}, [\rho_{2i}'])$ \\
Since $A_{i}$ is disjoint from the walls, Proposition~\ref{property_of_c}-(3) implies that $c([T_{A_{i}}],[\rho'_{2i}]) = 0$. 
On the other hand, 
$T_{A_{i}}(\rho_{2i}) = \rho_{2i-1}\rho_{2i} = \beta_{g-i} \alpha_{g-i}^{-1}$ in $\pi_1(S)$ hence $k([T_{A_{i}}], [\rho'_{2i}]) = d(\beta \alpha^{-1})=0$.
Finally observe that ${T_{A_i}}_* [\rho'_{2i}] -[\rho'_{2i}] = [\rho_{2i-1}]$, hence 
\begin{eqnarray*}
(\star) 
&:=& \sum_{k=1}^{g} \langle [\rho_{2k-1}'] - [\rho_{2k}'], \ \phi_* a-a \rangle 
= \langle -[\rho_{2i}'], [\rho_{2i-1}] \rangle =1,\\
(\star \star) 
&:=& \sum_{k=1}^g 
\langle a, [\rho_{2k-1}] \rangle 
\langle [\rho_{2k}'], \ \phi_*[\rho_{2k}']-[\rho_{2k}'] \rangle 
+  \sum_{k=1}^g 
\langle a, [\rho_{2k}] \rangle 
\langle [\rho_{2k-1}'], \ \phi_*[\rho_{2k-1}']-[\rho_{2k-1}'] \rangle \\
&=& 
\langle [\rho_{2i}'], [\rho_{2i-1}] \rangle^2  = (-1)^2 = 1. 
\end{eqnarray*}
Thus the equality (\ref{formulac}) holds.\\

{\bf Case II:} $(\phi, a) = (T_{B_i}, [\rho_{2i-1}'])$ \\ 
As in the Case I, $B_{i}$ is disjoint from the walls, so $c([T_{B_i}], [\rho'_{2i-1}])= 0$. 
On the other hand, ${T_{B_i}} (\rho_{2i-1}) = \rho_{2i-1}\rho_{2i}^{-1}= \beta_{g-i} \alpha_{g-i}$, hence $k([T_{B_{i}}], [\rho'_{2i-1}]) = d(\beta \alpha)=0$.
Finally observe that ${T_{B_i}}_*[\rho'_{2i-1}] -[\rho'_{2i-1}] = -[\rho_{2i}]$, hence 
\begin{eqnarray*}
(\star) &=& 
\langle [\rho_{2i-1}'],  -[\rho_{2i}] \rangle = -1, \\
(\star \star) &=&  
\langle [\rho_{2i-1}'], [\rho_{2i}] \rangle
\langle [\rho_{2i-1}'], -[\rho_{2i}] \rangle  = -1. 
\end{eqnarray*}
Thus the equality (\ref{formulac}) holds. \\

{\bf Case III:} $(\phi, a)=(T_{C_{i}}, [\rho_{2i}'])$ \\
Observe that $c([T_{C_{i}}], [\rho'_{2i}])= -1$.
Since 
$$T_{C_{i}}(\rho_{2i}) = \rho_{2i}\rho_{2i+1}^{-1}\rho_{2i}^{-1}\rho_{2i-1}\rho_{2i} = \alpha_{g-i}^{-1} \beta_{g-i-1}^{-1} \alpha_{g-i} \beta_{g-i} \alpha_{g-i}^{-1},$$ 
$k([T_{C_i}], [\rho'_{2i}]) = d(\beta \alpha^{-1}) + d(\beta^{-1})=0$.
Finally, ${T_{C_i}}_*[\rho'_{2i}]-[\rho'_{2i}] = [\rho_{2i-1}] - [\rho_{2i+1}]$, hence 
\begin{eqnarray*}
(\star) &=& 0, \\
(\star \star) &=&  
\langle [\rho_{2i}'], [\rho_{2i-1}] \rangle
\langle [\rho_{2i}'], -[\rho_{2i+1}] + [\rho_{2i-1}] \rangle  = (-1)^2=1. 
\end{eqnarray*}
Thus the equality (\ref{formulac}) holds.\\

{\bf Case IV:} $(\phi, a) = (T_{C_{i}}, [\rho_{2i+2}'])$\\
In this case, $c([T_{C_{i}}], [\rho'_{2i+2}])= 1$
and $$T_{C_{i}}(\rho'_{2i+2}) = \rho_{2i}^{-1}\rho_{2i-1}^{-1}\rho_{2i}\rho_{2i+1}\rho_{2i+2} = \alpha_{g-i} \beta_{g-i}^{-1} \alpha_{g-i}^{-1} \beta_{g-i-1} \alpha_{g-i-1}^{-1}.$$
Hence $k([T_{C_{i}}], [\rho'_{2i+2}]) = d(\alpha \beta^{-1} \alpha^{-1}) + d(\beta \alpha^{-1}) = -1$.
Finally, ${T_{C_i}}_{*}[\rho'_{2i+2}] -[\rho'_{2i+2}] = -[\rho_{2i-1}] +[\rho_{2i+1}]$, hence 
\begin{eqnarray*}
(\star) &=& 0, \\
(\star \star) &=&  
\langle [\rho_{2i+2}'], [\rho_{2i+1}] \rangle
\langle [\rho_{2i+2}'], -[\rho_{2i-1}] + [\rho_{2i+1}] \rangle  = (-1)^2=1. 
\end{eqnarray*}
Thus the equality (\ref{formulac}) holds.
These computations complete the proof.
\end{proof}

The map $k$ appears in various contexts in the theory of mapping class groups (see \S 2 of \cite{mo2} for concise overview). In particular, $k$ can be interpreted in terms of winding numbers of curves on surfaces. Fixing a non-vanishing vector field $X$ on $S$, one defines the {\em winding number} of an oriented simple closed curve $\gamma$ on $S$ as the rotation number of the tangent vector to $\gamma$ with respect to $X$ as $\gamma$ is traversed once positively. Then $k(\phi,\gamma)$ is equal to the difference of winding numbers of $\phi(\gamma)$ and $\gamma$ as stated in  Def.1.3.1 of Trapp's paper \cite{tra}.

Recall that in (Step 1) near Figure~\ref{step12} we have observed that a c-circle bounding a disc contributes $\pm 1$ to the function $c([\phi], a)$. Such a disc also contributes $\pm1$ to the above winding number. 

In addition, the self-linking number $sl(\gamma, [\Sigma])$ is the winding number of a nowhere vanishing section $X$ of the vector bundle $\xi|_\Sigma \to \Sigma$ along $\gamma$ relative to $\Sigma$, where $\Sigma$ is a Seifert surface of $\gamma$.

Interestingly, the keyword of the above facts is ``winding number''. 
The authors thank the anonymous referee for pointing this out.

Theorem~\ref{theorem:sl-formula} and Proposition~\ref{c-genus} give a new relationship between the contact structures of $3$-manifolds  and the Johnson-Morita homomorphisms.
This develops into the following question:
Our result roughly says that if we choose a homology class $a \in \Gamma_1/\Gamma_2= H_{1}(S;\Z)$ (from geometric point of view, this choice corresponds to the choice of Seifert surface of the transverse link $L=\widehat{b}$), then the Johnson-Morita representation $\varrho_3:\MCG(S_{g,1}) \to {\rm Aut}(\Gamma_1/\Gamma_3)$ gives the self-linking number. 
Now we ask whether a similar phenomenon occurs for the higher Johnson-Morita representation $\varrho_i: \MCG(S_{g, 1}) \to {\rm Aut}(\Gamma_1/\Gamma_{i})$, where $i > 3$, and provides new invariants of transverse links?

\subsection{The function $c$: General surface case} 

\quad\\
Finally we give a complete description of the function $c$ for general surfaces $S=S_{g, r}$.
We use the same convention as in Section~\ref{sec:surface-one-boundary}, that is, $[\rho'_{j}]$ is an element of $H_{1}(S,\partial S;\Z) \cong H^1(S; \Z)$ and $[\rho_{j}]$ is an element of $H_{1}(S;\Z)$.
Let $S' = S_{g, 1}$ be the surface obtained from $S=S_{g, r}$ by filling the boundary components  
$\gamma_1, \dots, \gamma_{r-1}$ by discs and $i: S \rightarrow S'$ the canonical inclusion. 
Let $\pi: \MCG(S_{g, r}) \rightarrow \MCG(S_{g, 1})$ be the forgetful map.
Let us consider the pull-back $\pi^{*}k : \MCG(S_{g, r}) \times H_{1}(S,\partial S) \rightarrow \Z$ of the crossed homomorphism $k$ defined by
\[ \pi^{*}k : ([\phi], a) \mapsto k(\pi[\phi], i_{*}(a) ). \]
For $1 \leq i \leq r-1+2g$, let 
$$
[\varsigma_j'] = \left\{
\begin{array}{ll}
[\rho_j'] & \mbox{ if } j=1, \ldots, r-1, \\
- [ \rho'_{j+1}] & \mbox{ if } i= r, r+2, \ldots, r-2+2g, \\
{ } [ \rho_{j-1}^{'}]  & \mbox{ if } i=r+1, r+3, \ldots, r-1+2g.
\end{array}
\right. 
$$
In particular, we have $\langle [\varsigma_i'], [\rho_j] \rangle = \delta_{i, j}$.
By combining Propositions \ref{c-planar} and \ref{c-genus} we get an explicit formula of the function $c$.

\begin{theorem}[A formula of function $c$]
\label{theorem:c-formula}

Let $S=S_{g,r}$ be the surface with genus $g$ and $r$ boundary components.
The function 
$c: \MCG(S_{g, r}) \times H_{1}(S,\partial S;\Z) \rightarrow \Z$ has the following expression: 
\[ c([\phi],a) = -2 (\pi^{*}k)([\phi], a) 
+ \sum_{j=1}^{2g+r-1} \langle [\varsigma_{j}'], \phi_{*}a-a\rangle 
- \sum_{j=1}^{2g+r-1}\langle a, [\rho_{j}] \rangle 
\langle [\varsigma'_{j}], \phi_{*}[\varsigma'_{j}] -[\varsigma'_{j}]\rangle \]
where $\phi_{*}a-a$ and $\phi_{*}[\varsigma'_{j}] -[\varsigma'_{j}]$ are regarded as elements of $H_{1}(S;\Z)$.
\end{theorem}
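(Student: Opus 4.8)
The proof follows the same pattern as Propositions~\ref{c-planar} and~\ref{c-genus}: reduce the identity to a generating set of $\MCG(S_{g,r})$, split those generators into a ``genus part'' governed by Proposition~\ref{c-genus} and a ``boundary part'' governed by Proposition~\ref{c-planar}, and glue the two contributions along the chamber decomposition of $S$. The first thing I would check is that \emph{both} sides are crossed homomorphisms in the sense of Proposition~\ref{property_of_c}(1),(2). For $c$ this is Proposition~\ref{property_of_c}. For the right--hand side: the term $-2(\pi^{*}k)([\phi],a)=-2k(\pi[\phi],i_{*}a)$ is the pull--back of Morita's crossed homomorphism $k$ along the group surjection $\pi$ and the ($\phi$--equivariant) map $i_{*}$, hence again a crossed homomorphism; the term $\sum_{j}\langle[\varsigma_{j}'],\phi_{*}a-a\rangle=\langle\sum_{j}[\varsigma_{j}'],\phi_{*}a-a\rangle$ satisfies the cocycle identity by the telescoping $(\psi\phi)_{*}a-a=(\psi_{*}-\mathrm{id})\phi_{*}a+(\phi_{*}a-a)$; and the bilinear term $\sum_{j}\langle a,[\rho_{j}]\rangle\langle[\varsigma_{j}'],\phi_{*}[\varsigma_{j}']-[\varsigma_{j}']\rangle$ is likewise a crossed homomorphism, using the same telescoping together with the facts that $\phi_{*}$ preserves the intersection pairing, that $\phi$ fixes $\partial S$ pointwise, and that $\{[\rho_{j}]\}_{j}$ and $\{[\varsigma_{j}']\}_{j}$ are dual bases, so that $x=\sum_{j}\langle x,[\rho_{j}]\rangle[\varsigma_{j}']$. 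Hence $c$ minus the claimed expression is a crossed homomorphism, and since a crossed homomorphism vanishing on a generating set is identically zero, it suffices to verify the formula when $\phi=T_{D}$ runs over a generating set; by additivity in $a$ one may moreover take $a=[\varsigma_{k}']$.

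Next I would realize $S_{g,r}$ as a one--holed genus--$g$ subsurface $\Sigma_{g,1}$ (containing the $g$ once--punctured--torus chambers of $S$) from which $r-1$ further discs have been removed inside the $r-1$ annular chambers, and choose a generating set of $\MCG(S_{g,r})$ adapted to this: the Lickorish twists $T_{A_{i}},T_{B_{i}}$ $(1\le i\le g)$ and $T_{C_{j}}$ $(1\le j\le g-1)$ supported in $\Sigma_{g,1}$, together with Dehn twists supported in the complementary planar region (e.g.\ about curves encircling consecutive pairs among $\gamma_{1},\dots,\gamma_{r-1}$). For a genus generator $T_{D}$ one has $\pi(T_{D})=T_{D}\in\MCG(S_{g,1})$, the map $i_{*}$ identifies the genus part of $H_{1}(S,\partial S)$ with $H_{1}(S_{g,1},\partial S_{g,1})$, and $(T_{D})_{*}$ fixes the boundary classes $[\rho_{1}],\dots,[\rho_{r-1}]$; plugging $a=[\varsigma_{k}']$ into the claimed formula and into $c$ and invoking the Case~I--IV computations of Proposition~\ref{c-genus} --- rewritten through the $\varsigma$--reindexing, under which the genus summands of the present formula become exactly the quantities $(\star)$ and $(\star\star)$ of that proof, while the boundary summands vanish --- yields the identity for these generators.

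For a planar generator $T_{\delta}$, the curve $\delta$ bounds a disc in $S_{g,1}$ after the $r-1$ holes are capped, so $\pi(T_{\delta})=\mathrm{id}$ and the $\pi^{*}k$--term of the formula disappears; since $T_{\delta}$ is supported in the planar region, $c([T_{\delta}],-)$ is computed by the wall--crossing count of Proposition~\ref{c-planar} (it vanishes by Proposition~\ref{property_of_c}(3) when $\delta$ avoids the walls), and this agrees with the surviving terms $\langle\sum_{j}[\varsigma_{j}'],(T_{\delta})_{*}a-a\rangle-\sum_{j}\langle a,[\rho_{j}]\rangle\langle[\varsigma_{j}'],(T_{\delta})_{*}[\varsigma_{j}']-[\varsigma_{j}']\rangle$ restricted to the planar indices $j\le r-1$, on which alone $(T_{\delta})_{*}$ is nontrivial. (A bounded number of ``mixed'' generators linking $\Sigma_{g,1}$ to the holes can either be avoided by a careful choice of generating set, or handled by combining the two previous arguments.) Having checked every generator, the reduction of the first paragraph completes the proof.

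The step I expect to be the real work is the verification, in the first paragraph, that the ``homological'' part of the right--hand side obeys the crossed--homomorphism cocycle, since the quantities $\phi_{*}[\varsigma_{j}']-[\varsigma_{j}']$ (which live in $H_{1}(S;\Z)$, not $H_{1}(S,\partial S;\Z)$) are not separately additive in $\phi$ and one must see the pieces recombine correctly --- the cleanest route being to group them with $-2\pi^{*}k$ and to exploit that $c$ is already known to be a crossed homomorphism, so that only the generator computations remain. A secondary, purely book-keeping, difficulty is the translation between the $\rho'$--bases used in Propositions~\ref{c-planar} and~\ref{c-genus} and the dual $\varsigma'$--basis appearing in the statement, keeping signs and the ``which chambers does $D$ meet'' dichotomy straight throughout.
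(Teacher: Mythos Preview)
Your approach matches the paper's: the paper offers no detailed proof of Theorem~\ref{theorem:c-formula} beyond the single line ``By combining Propositions~\ref{c-planar} and~\ref{c-genus} we get an explicit formula of the function $c$,'' and your sketch---verify both sides are crossed homomorphisms, reduce to a generating set of $\MCG(S_{g,r})$, then split the generators into a genus part handled by the Case~I--IV computations of Proposition~\ref{c-genus} and a planar part handled by Proposition~\ref{c-planar}---is precisely the natural expansion of that sentence, mirroring the structure of those two propositions' own proofs. The two technical points you flag (the cocycle verification for the homological terms on the right, and the handling of ``mixed'' generators) are exactly the places where care is needed, and the paper is silent on both.
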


\section{On the Bennequin-Eliashberg inequality}\label{sec:OT-disc}

In this section using open book foliations we give a new proof to the Bennequin-Eliashberg inequality \cite{el2}.

Recall that an {\em overtwisted disc} is an embedded disc whose boundary is a limit cycle in the characteristic foliation. Thus an overtwisted disc always has {\em Legendrian} boundary. 
As a corresponding notion in the framework of open book foliations we introduce the following:

\begin{definition}\label{def:trans-ot-disc}
Let $D \subset M_{(S, \phi)}$ be an oriented disc whose boundary is a positively braided unknot.
If the following are satisfied $D$ is called a {\em transverse overtwisted disc}: 
\begin{enumerate}
\item $G_{--}$ (Def \ref{def:negativity-graph}) is a connected tree with no fake vertices.
\item $G_{++}$ is homeomorphic to $S^1$.
\item $\F(D)$ contains no c-circles. 
\end{enumerate}
\end{definition}

By Proposition~\ref{sl-formula-1} we observe that $sl(\partial D, [D]) = 1$ for a transverse overtwisted disc $D$. 

\begin{proposition}\label{prop:overtwisted disc}
If $(S, \phi)$ contains a transverse overtwisted disc then the compatible  contact 3-manifold $(M,\xi)$ contains an overtwisted disc. 
\end{proposition}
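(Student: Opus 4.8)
The plan is to derive overtwistedness from Giroux's criterion for the dividing set of $D$, using the structural stability theorem to put $D$ into convex position. First I would apply Theorem~\ref{identity theorem}: since $\F(D)$ contains no $c$-circles (condition (3) of Definition~\ref{def:trans-ot-disc}), there is a contact structure $\xi$ supported by $(S,\phi)$ for which $\cF(D)$ is topologically conjugate to $\F(D)$, and in particular $D$ is convex in $(M_{(S,\phi)},\xi)$ in the sense of Etnyre--Van Horn-Morris \cite{ev}. By the Giroux correspondence \cite{giroux3} every contact structure compatible with $(S,\phi)$ is isotopic, hence contactomorphic, to this $\xi$; therefore it is enough to produce an overtwisted disc in this one model $(M,\xi)$.

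Next I would analyze the dividing set $\Gamma_D$, writing $D\setminus\Gamma_D = D_+\sqcup D_-$. By Remark~\ref{remark:rigid} (which applies because $\F(D)$ has no $c$-circles) the region $D_-$ is homotopy equivalent to the graph $G_{--}$. Condition (1) says $G_{--}$ is a connected tree, so $D_-$ is connected and contractible, hence a sub-disc of $D$; and since $G_{--}$ has no fake vertices it is disjoint from $\partial D$, while the collar of the positively transverse boundary $\partial D$ lies in $D_+$. Consequently $D_-\subset\Int(D)$, so $\gamma:=\partial D_-$ is an embedded closed curve, a component of $\Gamma_D$, bounding the disc $D_-$. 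In other words $\Gamma_D$ has a homotopically trivial closed component.

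Finally I would invoke Giroux's criterion \cite{giroux1}, \cite{ho}: if a convex surface other than $S^2$ has a homotopically trivial closed dividing curve, then the germ of the contact structure along it is overtwisted, and concretely an innermost such curve bounds an overtwisted disc. Applied to the disc $D$, this shows that $(M,\xi)$ --- and hence, by the first paragraph, every contact structure compatible with $(S,\phi)$ --- contains an overtwisted disc. As an alternative, more hands-on route avoiding the convex-surface machinery: after structural stability, $\gamma=\partial D_-$ is a closed curve transverse to $\cF(D)\cong\F(D)$ into which all nearby leaves flow (negative elliptic points are sinks of $X_{ob}$), so a $C^{0}$-small push of $D$ supported near $\gamma$ creates a genuine limit cycle of $\cF$; the disc it bounds contains the Euler-characteristic-$1$ configuration $G_{--}$ and is an overtwisted disc.

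The hard part will be the second step: making rigorous the identification of the combinatorial object $G_{--}$ with a deformation retract of the dividing region $D_-$, and checking that $\gamma=\partial D_-$ is an honest smooth dividing curve sitting in $\Int(D)$. This should follow from the proof of Theorem~\ref{identity theorem} together with Remark~\ref{remark:rigid} --- for instance one takes $D_-$ to be a regular neighborhood of $G_{--}$, the locus to which the leaves of $\cF(D)\cong\F(D)$ converge at the negative singularities --- but one must check that the conjugacy can be arranged so that $D_-$ is literally an embedded subsurface with $\partial D_-\subset\Int(D)$; here conditions (1) and (3) are the ones doing the work. One also needs the form of Giroux's criterion valid for convex surfaces with transverse boundary \cite{ev}, or a reduction to the Legendrian-boundary case by a standard modification of $D$ near $\partial D$; condition (2), $G_{++}\cong S^{1}$, together with $sl(\partial D,[D])=1$, is convenient for confirming that $D_+$ is an annulus carrying $\partial D$, but is not otherwise essential to the argument.
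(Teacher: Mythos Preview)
Your argument is correct, but it takes a different route from the paper's. The paper's proof is a single sentence: after invoking Theorem~\ref{identity theorem} to make $\cF(D)$ topologically conjugate to $\F(D)$, it applies Giroux's \emph{elimination lemma} directly to the characteristic foliation. Since $G_{++}\cong S^{1}$, the positive elliptic and positive hyperbolic points occur in equal number and can be cancelled in pairs along $G_{++}$, leaving a limit cycle; since $G_{--}$ is a tree, the negative singularities cancel down to a single negative elliptic point. The result is the standard picture of an overtwisted disc.

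Your approach instead passes through convex surface theory: you identify the dividing set via Remark~\ref{remark:rigid}, observe that $D_-$ (the neighborhood of $G_{--}$) is a contractible sub-disc in $\Int(D)$, and then invoke Giroux's criterion for a homotopically trivial dividing curve. This is a perfectly valid and arguably more conceptual argument, and it has the virtue of making explicit why conditions (1)--(3) in Definition~\ref{def:trans-ot-disc} are exactly what is needed. The cost is that you must appeal to the convex-with-transverse-boundary framework of \cite{ev} and to Giroux's criterion, whereas the paper's route needs only the elimination lemma, which is more elementary. Your ``alternative hands-on route'' of pushing $D$ to create a genuine limit cycle at $\partial D_-$ is in fact close in spirit to what Giroux elimination achieves, and is essentially an unpacking of the paper's one-line proof.
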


\begin{proof}
By Theorem~\ref{identity theorem} and applying Giroux's elimination lemma (see \cite[p.187]{geiges}) we can convert a transverse overtwisted disc to an overtwisted disc.
\end{proof}

We will prove the converse in Corollary~\ref{cor:converse}, hence the existence of a transverse overtwisted disc is equivalent to the existence of a usual overtwisted disc.

\begin{theorem}[The Bennequin-Eliashberg inequality \cite{el2}]
\label{theorem:BEinequality}
If a contact 3-manifold $(M,\xi)$ is tight, then for any null-homologous transverse link $L$ and its Seifert surface $\Sigma$, the following inequality holds: 
\[ sl(L,[\Sigma]) \leq -\chi(\Sigma) \]
\end{theorem}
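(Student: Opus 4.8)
The plan is a proof by contradiction using open book foliations. Suppose $(M,\xi)$ is tight and, for contradiction, that some null-homologous transverse link $L$ with Seifert surface $\Sigma$ has $sl(L,[\Sigma]) > -\chi(\Sigma)$. I will build a transverse overtwisted disc; Proposition~\ref{prop:overtwisted disc} then converts it into an honest overtwisted disc, contradicting tightness.

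\emph{Normalization.} By the Giroux correspondence \cite{giroux3} pick an open book $(S,\phi)$ supporting $\xi$, isotope $L$ to a closed braid with respect to $(S,\phi)$ (Mitsumatsu--Mori \cite{MM}, Pavelescu \cite{Pav,P2}), and by Theorem~\ref{prop:of} and Proposition~\ref{prop:no-c-circle} isotope $\Sigma$ rel $\partial\Sigma = L$ so that $\F(\Sigma)$ is an open book foliation with no $c$-circles. Here $sl$ and $\chi$ are unchanged, and since every contact structure compatible with $(S,\phi)$ is isotopic to $\xi$, overtwistedness is unaffected. Adding the identities of Proposition~\ref{sl-formula-1} and Proposition~\ref{poincare-hopf} gives
\[
sl(L,[\Sigma]) + \chi(\Sigma) \;=\; 2\bigl(e_-(\F(\Sigma)) - h_-(\F(\Sigma))\bigr),
\]
so our assumption is exactly $e_- > h_-$. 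It thus suffices to prove: \emph{if $\F(\Sigma)$ has no $c$-circles and $e_- > h_-$, then $\Sigma$ contains a transverse overtwisted disc.}

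\emph{Extracting a tree from $G_{--}$.} Consider the graph $G_{--}\subset\Sigma$ (Definition~\ref{def:negativity-graph}). Since there are no $c$-circles, every hyperbolic point lies in an $aa$-, $ab$-, or $bb$-tile and every negative elliptic point lies in an $ab$- or $bb$-tile, so $G_{--}$ has $e_-$ interior vertices, $h_-$ edges (one through each negative hyperbolic point, using that ($\mF$ iii) forbids saddle--saddle connections), and $f\ge 0$ fake vertices on $\partial\Sigma$; hence $\chi(G_{--}) = e_- + f - h_-$. The components of $G_{--}$ meeting $\partial\Sigma$ have total Euler characteristic at most $f$, so the components disjoint from $\partial\Sigma$ have total Euler characteristic at least $\chi(G_{--}) - f = e_- - h_- \ge 1$; as each such component has Euler characteristic at most $1$, one of them, call it $G^*$, has Euler characteristic exactly $1$, i.e.\ $G^*$ is a tree with no fake vertices.

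\emph{Building the disc.} Take for $D$ a regular neighborhood of $G^*$ in $\Sigma$, enlarged just enough to contain the positive elliptic points of the $ab$- and $bb$-tiles abutting $G^*$. Then $D$ is an embedded disc with $\F(D)$ containing no $c$-circles, $G_{--}(D) = G^*$ is a connected tree with no fake vertices, and Poincar\'e--Hopf forces $e_+(D) - h_+(D) = \chi(D) - \chi(G^*) = 0$; as the vector field $X_{ob}$ flows outward from the positive elliptic points toward $\partial D$, the graph $G_{++}(D)$ is a single circle and $\partial D$ is a positively braided unknot. Hence $D$ is a transverse overtwisted disc, and the theorem follows. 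The main obstacle is exactly this final verification: that a neighborhood of $G^*$ can be chosen to be an embedded disc whose boundary is a positive braid and whose positive graph $G_{++}$ is a single circle rather than some other Euler-characteristic-zero graph. This is where one must use the explicit local models of the $aa$-, $ab$-, and $bb$-tiles in Figure~\ref{region} and the way they are assembled along $G^*$ --- possibly after performing a few finger moves as in the proof of Theorem~\ref{prop:of} (which, applied to non-singular regions, alter neither $e_- - h_-$ nor the conclusion) to push off parts of $\Sigma$ that accidentally meet the closed neighborhood of $G^*$.
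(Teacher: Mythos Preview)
Your overall strategy matches the paper's: contradict tightness, normalize to an open book foliation without $c$-circles, use $e_->h_-$ to find a tree component $G^*$ of $G_{--}$ with no fake vertices (this is exactly the paper's Lemma~\ref{lemma:keylemma}, and your argument for it is correct), then build a transverse overtwisted disc around $G^*$.

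The gap is precisely where you flag it, and your proposed fix does not work. Your claim ``$D$ is an embedded disc with $G_{++}(D)\cong S^1$ and $\partial D$ a positive braid'' is false in general. The paper takes for $\overline{\mathcal R}$ the closure of the union of all $b$-arcs ending at vertices of $G^*$; this is foliated by $b$-arcs and its boundary is $\mathcal P=G_{++}(\overline{\mathcal R})$. Because $G^*$ is a tree, $\overline{\mathcal R}\setminus\mathcal P$ is an open disc, but the paper explicitly notes that $\overline{\mathcal R}$ itself need not be a disc and $\mathcal P$ need not be an embedded circle: distinct $b$-arcs from different parts of $G^*$ can share a positive elliptic endpoint, or share a stable separatrix, so that $\mathcal P$ has valence-$>2$ points or $\overline{\mathcal R}$ has nontrivial topology. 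Your ``regular neighborhood enlarged to contain the positive elliptic points'' is essentially this $\overline{\mathcal R}$, and it runs into the same obstruction.

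Finger moves do not resolve this. The issue is not that other sheets of $\Sigma$ meet the neighborhood in $M$; the neighborhood is a subset of $\Sigma$, and the problem is its \emph{intrinsic} topology. The paper's remedy is to abandon the requirement that the disc sit inside $\Sigma$: one cuts $\overline{\mathcal R}$ along the offending pieces $\mathcal P_\circ\subset\mathcal P$ (either a shared positive elliptic point or a shared union of stable separatrices), obtaining two copies $\lambda_1,\lambda_2$ of each such piece, then pushes $\lambda_2$ off $\lambda_1$ in $M$ and attaches a thin collar so that the result $\widetilde D$ is an embedded disc in $M$ whose boundary is a positively braided unknot and whose open book foliation on the collar is nonsingular. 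The local movie presentations (the paper's Figures~\ref{fig:neighbor-lambda}--\ref{fig:neighbor-lambda3}) show this can be done compatibly with the pages, after which $\widetilde D$ visibly satisfies Definition~\ref{def:trans-ot-disc}. That cutting-and-re-embedding step is the missing idea in your argument.
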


The following corollary was pointed out by John Etnyre and a proof is straightforward. 

\begin{corollary}
The following are equivalent: 

\begin{enumerate}
\item
$(M,\xi)$ is tight.
\item
For any null-homologous transverse link $L$ and its Seifert surface $\Sigma$ we have $sl(L,[\Sigma]) \leq -\chi(\Sigma)$.
\item
For any transverse unknot $U=\partial D$ we have $sl(U,[D]) \leq -\chi(D)=-1$. 
\end{enumerate} 
\end{corollary}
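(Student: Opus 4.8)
The plan is to prove the cyclic chain of implications $(1)\Rightarrow(2)\Rightarrow(3)\Rightarrow(1)$, from which the three-way equivalence is immediate. The implication $(1)\Rightarrow(2)$ is precisely the content of Theorem~\ref{theorem:BEinequality}, which has just been proved, so here there is nothing to do.

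For $(2)\Rightarrow(3)$ I would simply specialize: a transverse unknot $U$ is, by definition, the boundary of an embedded disc $D\subset M$, and $D$ is then a Seifert surface of $U$ with $\chi(D)=1$. Applying hypothesis $(2)$ to the pair $(L,\Sigma)=(U,D)$ gives $sl(U,[D])\le-\chi(D)=-1$ at once.

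For $(3)\Rightarrow(1)$ I would argue the contrapositive: assuming $(M,\xi)$ is overtwisted, I will exhibit a transverse unknot that violates the inequality in $(3)$. The key point is that an overtwisted $(M,\xi)$ contains a transverse overtwisted disc $D$ --- this is the converse of Proposition~\ref{prop:overtwisted disc}, i.e.\ the forthcoming Corollary~\ref{cor:converse}, and it can also be seen by the classical argument that a suitable transverse push-off of the Legendrian boundary of an ordinary overtwisted disc bounds a disc of non-negative self-linking number. For such a disc $D$ we already observed, right after Definition~\ref{def:trans-ot-disc} and via Proposition~\ref{sl-formula-1}, that $sl(\partial D,[D])=1$. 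Hence $sl(\partial D,[D])=1>-1=-\chi(D)$, so $(3)$ fails, and the contrapositive is established. Closing the cycle shows that $(1)$, $(2)$ and $(3)$ are equivalent.

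I expect the only step with genuine content to be the passage from ``$(M,\xi)$ overtwisted'' to ``there is a transverse unknot with self-linking number exceeding $-1$'', i.e.\ the converse of Proposition~\ref{prop:overtwisted disc}; every other step is formal bookkeeping. The main thing to watch is the logical order of the paper: one must ensure that Corollary~\ref{cor:converse}, or whatever classical contact-geometric input is used in its place, is available independently of this corollary, so that invoking it in the proof of $(3)\Rightarrow(1)$ does not create a circular dependency with the proof of Theorem~\ref{theorem:BEinequality}.
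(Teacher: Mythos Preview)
Your argument is correct and is exactly the straightforward proof the paper has in mind; indeed the paper does not spell one out, remarking only that ``a proof is straightforward.'' Regarding your concern about logical order: the proof of Corollary~\ref{cor:converse} relies only on the \emph{proof} of Theorem~\ref{theorem:BEinequality} (not on the present corollary), and in any case the classical transverse push-off argument you allude to---which is precisely how Corollary~\ref{cor:converse} is proved---is entirely independent of this equivalence, so no circularity arises.
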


We use the following Lemma \ref{lemma:keylemma} and Proposition~\ref{sl-formula-1} to prove Theorem~\ref{theorem:BEinequality}.

\begin{lemma}\label{lemma:keylemma}
Let $L$ be a null-homologous transverse link in a contact 3-manifold $(M,\xi)$ and $\Sigma$ be a Seifert surface for $L$. 
Assume that 
\[ sl(L,[\Sigma]) > -\chi(\Sigma), \]
that is, the Bennequin-Eliashberg inequality is violated.
With some perturbation of $\Sigma$ fixing the boundary we can make 
the graph $G_{--}$ contain a contractible component with no fake vertices. 
\end{lemma}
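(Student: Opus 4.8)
The plan is to turn the inequality $sl(L,[\Sigma])>-\chi(\Sigma)$ into the statement $e_->h_-$ about the signed counts of singularities, and then locate the desired component of $G_{--}$ by an Euler--characteristic count on the graph. First I would put $L=\partial\Sigma$ in braid position with respect to a supporting open book $(S,\phi)$ of $(M,\xi)$ (a transverse isotopy, hence changing neither $sl$ nor $\chi$; see Section~\ref{sec:SL}), perturb $\Sigma$ rel $\partial\Sigma$ so that it admits an open book foliation $\F(\Sigma)$ (Theorem~\ref{prop:of}), and then apply Proposition~\ref{prop:no-c-circle} to arrange that $\F(\Sigma)$ contains no $c$-circles; all of these are ambient isotopies fixing $L$, so the hypothesis survives. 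Adding the identities of Propositions~\ref{poincare-hopf} and~\ref{sl-formula-1},
\[ sl(L,[\Sigma])+\chi(\Sigma)=\bigl(-(e_+-e_-)+(h_+-h_-)\bigr)+\bigl((e_++e_-)-(h_++h_-)\bigr)=2(e_--h_-),\]
so the hypothesis is exactly $e_--h_-\ge 1$.

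Next I would read off the combinatorics of $G_{--}$ from the region decomposition. Since $\F(\Sigma)$ has no $c$-circles, Proposition~\ref{prop:region} decomposes $\Sigma$ into $aa$-, $ab$- and $bb$-tiles; every negative hyperbolic point lies in the interior of exactly one such tile and contributes exactly one edge of $G_{--}$, namely the union of its two unstable separatrices together with the point itself, so $G_{--}$ has exactly $h_-$ edges. By Claim~\ref{sign-observation} the elliptic endpoint of an $a$-arc is positive, so every negative elliptic point is adjacent only to $b$-arcs, hence lies in an $ab$- or $bb$-tile and is a (non-fake) vertex of $G_{--}$; consequently $G_{--}$ has exactly $e_-+f$ vertices, where $f$ is the number of fake vertices. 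Therefore
\[ \chi(G_{--})=(e_-+f)-h_-\ \ge\ 1+f. \]

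Finally I would split $G_{--}=G'\sqcup G''$, where $G'$ is the union of the connected components containing no fake vertex and $G''$ the union of those containing at least one fake vertex. The $f$ fake vertices all lie in $G''$, each in a single component, so $G''$ has at most $f$ components; since any connected graph has Euler characteristic at most $1$, this gives $\chi(G'')\le f$, whence $\chi(G')=\chi(G_{--})-\chi(G'')\ge (1+f)-f=1>0$. Hence $G'$ is nonempty and at least one of its components $G_0'$ has $\chi(G_0')=1$, i.e.\ $G_0'$ is a tree, hence contractible; being a component of $G'$ it carries no fake vertices, so $G_0'$ is the required component.

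The step I expect to be the crux is the last one: the naive estimate $\chi(G_{--})\ge 1$ only produces a contractible component that may still carry fake vertices, and the point is to charge the fake-vertex-bearing components against the fake vertices themselves, leaving the fake-vertex-free part of $G_{--}$ with strictly positive Euler characteristic. A secondary point requiring care is the exact vertex-and-edge count of $G_{--}$: that every negative elliptic point really is a vertex (via Claim~\ref{sign-observation}) and that each negative hyperbolic point contributes precisely one edge (via the region decomposition and the absence of $c$-circles, which in particular rules out type $ac$-, $bc$- and $cc$-hyperbolic points).
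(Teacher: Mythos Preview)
Your proof is correct and follows essentially the same approach as the paper: remove $c$-circles, use Propositions~\ref{poincare-hopf} and~\ref{sl-formula-1} to get $e_--h_->0$, and then do an Euler-characteristic count on $G_{--}$ to locate a tree component with no fake vertices. The only cosmetic difference is organizational---the paper works per component (bounding $e_-(\Gamma_i)-h_-(\Gamma_i)\le 1-f(\Gamma_i)$ and finding a strictly positive summand), whereas you compute $\chi(G_{--})\ge 1+f$ globally and then split off the fake-vertex-bearing part---but this is the same idea.
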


\begin{proof}
Using Propositions~\ref{poincare-hopf} and \ref{sl-formula-1}, we assume that $sl(L, [\Sigma]) + \chi(\Sigma) = 2(e_{-} - h_{-}) >0$, i.e., $e_{-}-h_{-} > 0$.
Let $\Gamma_{1}, \ldots ,\Gamma_{k}$ denote the connected components of the graph $G_{--}$.
Let $f(\Gamma_{i})$ be the number of the fake vertices of $\Gamma_{i}$ and $e_{-}(\Gamma_{i})$ the number of the negative elliptic points in $\Gamma_i$. 
Let $h_{-}(\Gamma_{i})$ be the number of the edges in $\Gamma_{i}$.
By Proposition~\ref{prop:no-c-circle} with some perturbation of $\Sigma$ fixing the boundary we may assume that $\F(\Sigma)$ has no $c$-circles, hence the region decomposition (Proposition~\ref{prop:region}) does not contain type $ac$, $bc$ or $cc$ regions, so $e_- = \sum_{i=1}^k e_-(\Gamma_i)$ and $h_- = \sum_{i=1}^k h_-(\Gamma_i)$. 
Since $\Gamma_{i}$ is connected, the Euler characteristic of $\Gamma_i$ satisfies that:
\[ \chi(\Gamma_{i}) = ( f( \Gamma_{i}) + e_{-}(\Gamma_{i}) ) - h_{-}(\Gamma_{i}) \leq 1, \]
i.e., 
$e_{-}(\Gamma_{i} )- h_{-}(\Gamma_{i}) \leq 1-f(\Gamma_{i}).$
Therefore we obtain that $e_{-}(\Gamma_{i}) - h_{-}(\Gamma_{i}) = 1$ if and only if $f(\Gamma_{i})=0$ and $\Gamma_{i}$ is contractible. 
Now we have: 
\[ 
0 < e_{-}-h_{-} 
= \sum_{i=1}^k e_{-}(\Gamma_{i}) - \sum_{i=1}^k h_{-}(\Gamma_{i}) = \sum_{i=1}^k (e_{-}(\Gamma_{i}) - h_{-}(\Gamma_{i})) \leq \sum_{i=1}^k(1-f(\Gamma_{i})). 
\]
Thus for some $i$, the equality $e_{-}(\Gamma_{i})- h_{-}(\Gamma_{i}) = 1$ must hold, which implies that $\Gamma_{i}$ is contractible and has no fake vertices.
\end{proof}

Now we are ready to prove Theorem~\ref{theorem:BEinequality}.
Eliashberg's original proof to the Bennequin-Eliashberg inequality uses characteristic foliation theory. 
We give an alternative proof from a view point of open book foliations.

\begin{proof}[Proof of Theorem~\ref{theorem:BEinequality}]

Suppose that there exists a null-homologous transverse link $L$ in $(M,\xi)$ with a Seifert surface $\Sigma$ such that $sl(L, [\Sigma])> -\chi(\Sigma)$. We will show that $\xi$ is overtwisted.

Fix an open book $(S,\phi)$ which supports $\xi$ and isotope $L$ and $\Sigma$ with the transverse link type of $L$ preserved until it admits an open book foliation $\F(\Sigma)$.
By Proposition~\ref{prop:no-c-circle} and Lemma~\ref{lemma:keylemma}, we may assume that $\F(\Sigma)$ contains no $c$-circles and the negativity graph $G_{--}$ contains a contractible component $\Gamma \subset G_{--}$ with no fake vertices. 
In particular, the induced region decomposition of $\Sigma$ consists only of $aa$-, $ab$-, and $bb$-tiles.

Let $\mathcal R \subset \Sigma$ be the set of $b$-arcs that end on the vertices of $\Gamma$.
Since $\Gamma$ lives only in $ab$- and $bb$-tiles and has no fake vertices, we have $\Gamma \subset \Int(\overline{\mathcal R})$, where $\overline{\mathcal R}$ is the closure of $\mathcal R$. 
Let $\mathcal P =G_{++}(\overline{\mathcal R})$ be the set of positive elliptic points in $\overline{\mathcal R}$ and the stable separatrices approaching to the positive hyperbolic points in $\overline{\mathcal R}$. 
Since $\Gamma$ is a tree with no fake vertices, $\overline{\mathcal R}\smallsetminus \mathcal P$ is an open disc, $D$, embedded in $\Sigma$.

In general, $\overline{\mathcal R}$ may not be a disc, or $\partial D= \mathcal P$ may not be an embedded circle.  
Let $\mathcal P_\circ \subset \mathcal P$ denote the subset of $\mathcal P$ where we cut out $\overline{\mathcal R}$ to obtain $D$. 
We have $\overline{\overline{\mathcal R} \setminus \mathcal P_\circ} = \overline D$. 
A connected component $\lambda$ of $\mathcal P_\circ$ is either 
\begin{itemize}
\item[(i)] a positive elliptic point like in Figure~\ref{fig:neighbor-lambda}, or 
\item[(ii)] a union of stable separatrices like the thick arcs in Figures~\ref{fig:neighbor-lambda2} and \ref{fig:neighbor-lambda3}. 
\end{itemize}
\begin{figure}[htbp]
\begin{center}
\SetLabels
(.6*.8) part of $\overline{\mathcal R}$\\
(0*.8) part of $\overline{\mathcal R}$\\
(.82*.8) part of $\widetilde D$\\
(.21*.2) $\lambda$\\
(.6*.5) $\lambda$\\
(.92*.7) $\lambda_1$\\
(.92*.31) $\lambda_2$\\
(.75*.22) cut at $\lambda$\\
(.75*.05) and put collar\\
\endSetLabels
\strut\AffixLabels{\includegraphics*[height=40mm]{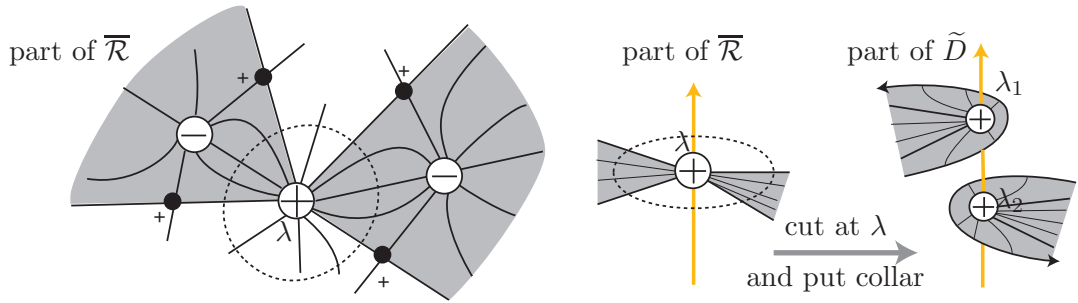}}
\end{center}
\caption{Case (i). Construction of $\widetilde D$ when $\lambda$ is a single positive elliptic point. The arrows depict the binding.}
\label{fig:neighbor-lambda}
\end{figure}
\begin{figure}[htbp]
\begin{center}
\SetLabels
(0*1)  part of $\overline{\mathcal R}$\\
(.14*.88) $\lambda$\\
(.18*.73) $w_1$\\
(.18*.94) $w_2$\\
(.0*.8) $v_1$\\
(0.25*.85)  $v_2$\\
(0.6*1)  part of $\widetilde D$\\
(.73*.86) $\lambda_1$\\
(.85*.86) $\lambda_2$\\
(.68*.7) $w_1'$\\
(.89*.7) $w_1''$\\
(.7*.96) $w_2''$\\
(.88*.95) $w_2'$\\
(.6*.8) $v_1$\\
(.96*.85)  $v_2$\\
(.4*.82) cut along $\lambda$\\
(.4*.76) and put collar\\
(0.68*0.39)  $v_2$\\
(.42*.39) $w_1''$\\
(.42*.46) $w_1'$\\
(.42*.59) $v_1$\\
(.68*.59) $w_2''$\\
(.68*.52) $w_2'$\\
(0.68*.05)  $v_2$\\
(.42*.05) $w_1''$\\
(.42*.1) $w_1'$\\
(.42*.24) $v_1$\\
(.68*.24) $w_2''$\\
(.68*.18) $w_2'$\\
(1.02*.19)  $v_2$\\
(.76*.19) $w_1''$\\
(.76*.25) $w_1'$\\
(.76*.38) $v_1$\\
(1.02*.38) $w_2''$\\
(1.02*.32) $w_2'$\\
(-.01*.45) $w_1$\\
(0.26*0.59)  $w_2$\\
(-.01*.59) $v_1$\\
(0.26*.45)  $v_2$\\
(-.01*.05) $w_1$\\
(0.26*0.18)  $w_2$\\
(-.01*.18) $v_1$\\
(0.26*0.05)  $v_2$\\
\endSetLabels
\strut\AffixLabels{\includegraphics*[height=100mm]{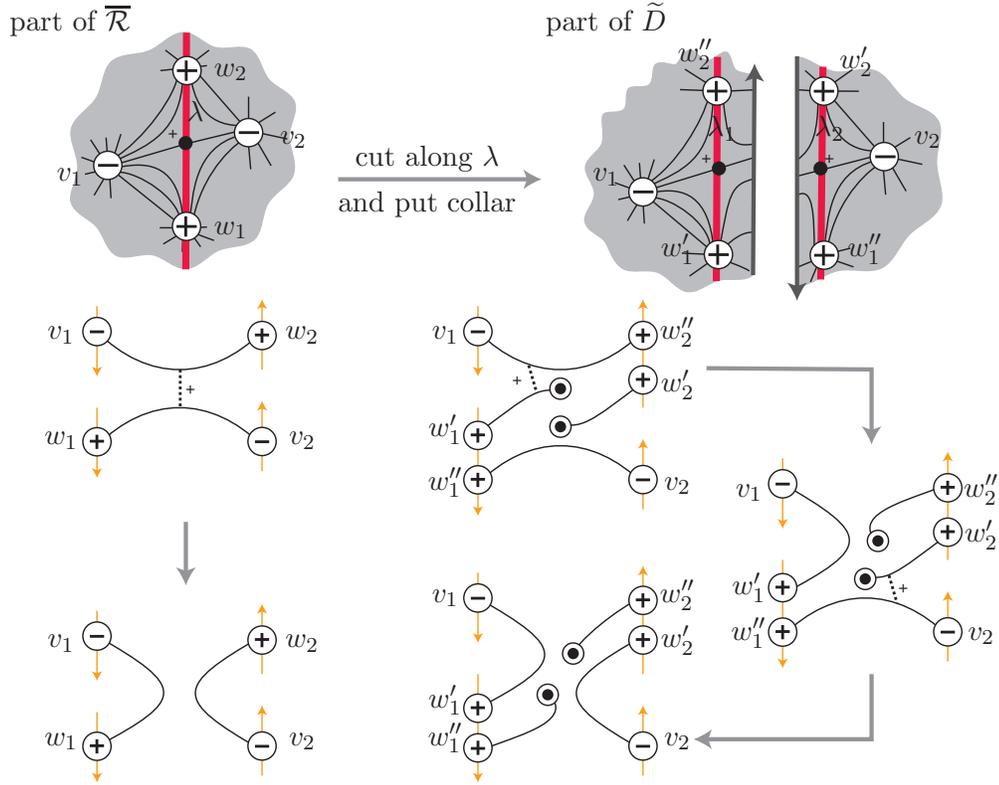}}
\end{center}
\caption{Case (ii). Construction of $\widetilde D$. The thin arrows represent part of the binding.}\label{fig:neighbor-lambda2}
\end{figure}
\begin{figure}[htbp]
\begin{center}
\SetLabels
(0.25*.95)  part of $\overline{\mathcal R}$\\
(0.6*.95) part of $\overline D$\\
(.96*.95)  part of $\widetilde D$\\
(.25*.75) $G_{--}$\\
(.6*.75) $G_{--}$\\
(.95*.75) $G_{--}$\\
(.1*.5) $\lambda$\\
(.45*.55) $\lambda_1$\\
(.45*.35) $\lambda_2$\\
(.78*.55) $\lambda_1$\\
(.78*.35) $\lambda_2$\\
(.32*.0) cut along $\lambda$\\
(.67*.0) put collar\\
\endSetLabels
\strut\AffixLabels{\includegraphics*[height=35mm]{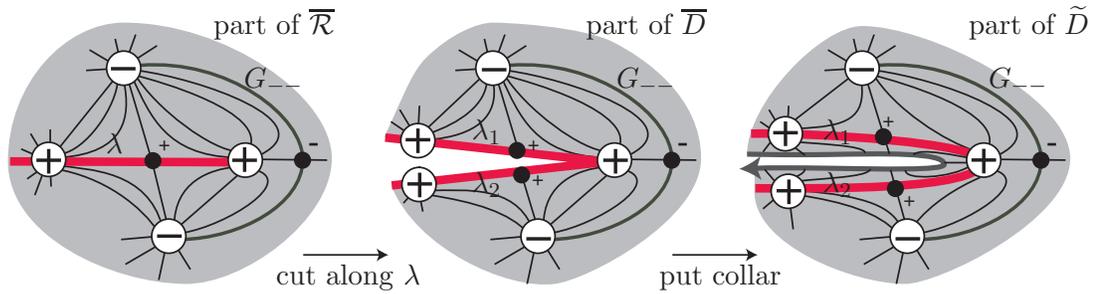}}
\end{center}
\caption{Case (ii) Construction of $\widetilde D$.}\label{fig:neighbor-lambda3}
\end{figure}
Cutting $\overline{\mathcal R}$ along $\lambda$ produces two copies of $\lambda$ which we denote by $\lambda_1$ and $\lambda_2$. 
Move $\lambda_2$ slightly away from $\lambda_1$ so that now $\partial \overline D$ is an embedded circle in $M$. 
We extend $\overline D$ by adding a collar neighborhood along $\partial \overline D$ so that the resulting surface, $\widetilde D$, is a disc embedded in $M$, its boundary $\partial \widetilde D$ is a positive transverse unknot, and the open book foliation  of the collar $\widetilde D\setminus \overline D$ has no singularities. 
Figure~\ref{fig:neighbor-lambda2} shows the change in open book foliation near $\lambda$ and corresponding movie presentations.

By the construction $\widetilde D$ satisfies all the requirements in Definition~\ref{def:trans-ot-disc}, so $\widetilde D$ is a transverse overtwisted disc. 
By Proposition~\ref{prop:overtwisted disc} we conclude that $\xi$ is overtwisted. 
\end{proof}

\begin{corollary}\label{cor:converse}
If a contact 3-manifold $(M_{(S, \phi)},\xi_{(S, \phi)})$ contains an overtwisted disc then $(S, \phi)$ contains a transverse overtwisted disc.
\end{corollary}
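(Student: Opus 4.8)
The plan is to reduce \emph{Corollary~\ref{cor:converse}} to the construction already carried out inside the proof of Theorem~\ref{theorem:BEinequality}; the only genuinely new ingredient is the classical observation that an overtwisted contact manifold carries a transverse unknot that violates the Bennequin inequality. So the first step: given an overtwisted disc $\Delta \subset (M_{(S,\phi)},\xi_{(S,\phi)})$, recall that $\partial\Delta$ is a Legendrian unknot with $tb(\partial\Delta)=0$, and that (after perturbing and, if necessary, applying Giroux elimination to $\cF(\Delta)$) one may take $\cF(\Delta)$ to have a single interior singularity, a positive elliptic point, with $\partial\Delta$ a limit cycle. The index count then gives $\mathrm{rot}(\partial\Delta)=\pm 1$, so one of the two transverse push-offs of $\partial\Delta$ — call it $U$ — has $sl(U)=tb(\partial\Delta)\mp\mathrm{rot}(\partial\Delta)=1$. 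Pushing $\Delta$ off itself in the corresponding normal direction yields an embedded disc $D$ with $\partial D=U$ and $sl(U,[D])=1$, so that $sl(U,[D])=1>-1=-\chi(D)$.

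The second step is to feed the pair $(U,D)$ into the proof of Theorem~\ref{theorem:BEinequality} verbatim. Since $U$ is a null-homologous transverse link with Seifert surface $D$ and $sl(U,[D])>-\chi(D)$, that argument isotopes $U$ and $D$ (fixing the transverse type of $U$) so that $D$ admits an open book foliation, uses Proposition~\ref{prop:no-c-circle} to remove all $c$-circles, invokes Lemma~\ref{lemma:keylemma} to produce a contractible component $\Gamma\subset G_{--}$ with no fake vertices, and then performs the cut-and-collar surgery of Figures~\ref{fig:neighbor-lambda}--\ref{fig:neighbor-lambda3} on $\overline{\mathcal R}\smallsetminus\mathcal P$. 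The output $\widetilde D$ is an embedded disc in $M_{(S,\phi)}$ with positively braided unknot boundary whose open book foliation satisfies conditions (1)--(3) of Definition~\ref{def:trans-ot-disc}; i.e., $\widetilde D$ is a transverse overtwisted disc for $(S,\phi)$, which is exactly the conclusion.

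Because all of the foliation-theoretic machinery is already in place in the proof of Theorem~\ref{theorem:BEinequality}, the only point demanding care is the bookkeeping in the first step: one must pin down the rotation number of $\partial\Delta$ and choose the correct (positive or negative) transverse push-off so that $sl(U,[D])$ comes out $+1$ rather than $0$ or $-1$. This is routine for the standard overtwisted disc (cf.\ \cite{geiges}), so there is no real obstacle; alternatively one can avoid push-offs altogether by taking $U$ to be a transverse unknot contained in a collar of $\partial\Delta$ inside $\Delta$ and $D$ the sub-disc it bounds, reading off $sl(U,[D])=1$ directly from the spiralling of $\cF(\Delta)$ near $\partial\Delta$.
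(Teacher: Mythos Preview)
Your proposal is correct and follows essentially the same approach as the paper: take a transverse push-off of the Legendrian boundary of the overtwisted disc to obtain a transverse unknot $U$ bounding a disc $D$ with $sl(U,[D])=1>-\chi(D)$, then feed this pair into the proof of Theorem~\ref{theorem:BEinequality} to extract a transverse overtwisted disc. The paper fixes the orientation of $\Delta$ so that the central elliptic point is \emph{negative} and then takes the positive push-off lying in a collar $\nu(\Delta)\supset\Delta$ (citing \cite[p.~129]{etnyre} for the local picture), whereas you leave the orientation unfixed and argue that one of the two push-offs works; this is only a cosmetic difference.
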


\begin{proof}
Let $\Delta \subset (M,\xi)$ be an overtwisted disc. We orient $\Delta$ so that the elliptic point of $\cF(D)$ has negative sign.
Since $\Delta$ is embedded and the boundary $L=\partial \Delta$ is a Legendrian knot, \cite[p.129]{etnyre} implies that we can take a collar neighborhood $\nu(\Delta)$ of $\Delta$ whose characteristic foliation $\cF(\nu(\Delta))$ is sketched in Figure~\ref{ot-disc2}. 
\begin{figure}[htbp]
\begin{center}
\SetLabels
(.8*.45) $L$\\
(.95*.45) $L^+$\\
\endSetLabels
\strut\AffixLabels{\includegraphics*[width=50mm]{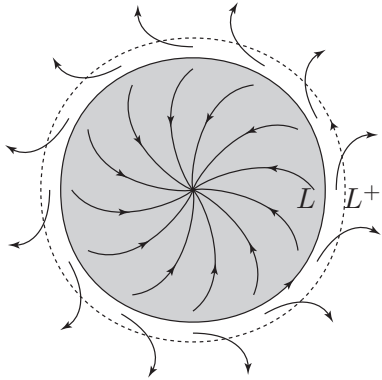}}
\caption{The characteristic foliation $\cF(\nu(\Delta))$ and a positive transverse push off $L^+$.}
\label{ot-disc2}
\end{center}
\end{figure}
Let $L^+ \subset \nu(\Delta)$ (dashed circle in Figure~\ref{ot-disc2}) be a positive transverse push off of $L$. 
Let $\Delta^+ \subset \nu(\Delta)$ be the disc bounded by $L^+$. 
Then $sl(L^+, [\Delta^+])=1$ and the Euler characteristic has $\chi(\Delta^+) = 1$. 
In particular,  $sl(L^+, [\Delta^+]) > -\chi(\Delta^+)$. 
By the proof of Theorem~\ref{theorem:BEinequality} we can find a transverse overtwisted disc.
\end{proof}

\section*{Acknowledgement}
The authors would like to thank Joan Birman, John Etnyre and the referees for numerous constructive comments. They also thank Marcos Ortiz for help with the English. 
The first author was supported by JSPS Research Fellowships for Young Scientists.  
The second author was partially supported by NSF grants DMS-0806492 and DMS-1206770.

\end{document}